\documentclass[11pt,letterpaper,reqno]{amsart}

\title[]{Controlled Lagrangians and Stabilization of Euler--Poincar\'e Equations with Symmetry Breaking Nonholonomic Constraints}
\author{Jorge S. Garcia}
\author{Tomoki Ohsawa}
\address{Department of Mathematical Sciences, The University of Texas at Dallas, 800 W Campbell Rd, Richardson, TX 75080-3021}
\email{jorge.garcia4@utdallas.edu,tomoki@utdallas.edu}
\date{\today}

\keywords{Stabilization; controlled Lagrangians; nonholonomic Euler--Poincar\'e; broken symmetry; semidirect product}

\subjclass[2020]{34H15, 37J60, 70E17, 70G45, 70F25, 70G65, 93D05, 93D15}


\usepackage{graphicx,amssymb,mathrsfs,paralist}
\usepackage[caption=false]{subfig}
\usepackage[margin=1in, marginpar=.5in]{geometry}

\usepackage[numbers,sort&compress]{natbib}
\usepackage[colorlinks=false]{hyperref}

\usepackage[nameinlink]{cleveref}


\theoremstyle{plain}
\newtheorem{theorem}{Theorem}
\newtheorem*{theorem*}{Theorem}

\newtheorem{proposition}[theorem]{Proposition}
\newtheorem*{EC}{Energy--Casimir Theorem~(\citet{Ae1992})}
\newtheorem{assumption}{Assumption}
\Crefname{assumption}{Assumption}{Assumptions}

\theoremstyle{definition}

\newtheorem{example}[theorem]{Example}

\theoremstyle{remark}
\newtheorem{remark}[theorem]{Remark}

\def\od#1#2{\frac{d#1}{d#2}}
\def\pd#1#2{\frac{\partial #1}{\partial #2}}
\def\fd#1#2{\frac{\delta #1}{\delta #2}}

\def\tpd#1#2{\partial #1/\partial #2}
\def\tfd#1#2{\delta #1/\delta #2}

\def\dzero#1#2{\left.\od{}{#1} #2 \right|_{#1=0}}

\def\parentheses#1{{\left(#1\right)}}

\def\tr{\mathop{\mathrm{tr}}\nolimits}

\def\Span{\mathop{\mathrm{span}}\nolimits} 

\def\norm#1{{\left\|#1\right\|}}
\def\abs#1{{\left|#1\right|}}

\def\R{\mathbb{R}}

\def\defeq{\mathrel{\mathop:}=}
\def\eqdef{=\mathrel{\mathop:}}
\def\setdef#1#2{{\left\{ #1 \ |\ #2 \right\}}}
\def\ip#1#2{{\left\langle#1,#2\right\rangle}}

\def\diag{\operatorname{diag}}

\def\eps{\varepsilon}

\def\SO{\mathsf{SO}}
\def\SE{\mathsf{SE}}

\def\se{\mathfrak{se}}
\def\so{\mathfrak{so}}

\def\d{{\bf d}}

\newcommand\Ad{\operatorname{Ad}}
\newcommand\ad{\operatorname{ad}}

\makeatletter
\DeclareFontFamily{OMX}{MnSymbolE}{}
\DeclareSymbolFont{MnLargeSymbols}{OMX}{MnSymbolE}{m}{n}
\SetSymbolFont{MnLargeSymbols}{bold}{OMX}{MnSymbolE}{b}{n}
\DeclareFontShape{OMX}{MnSymbolE}{m}{n}{
    <-6>  MnSymbolE5
   <6-7>  MnSymbolE6
   <7-8>  MnSymbolE7
   <8-9>  MnSymbolE8
   <9-10> MnSymbolE9
  <10-12> MnSymbolE10
  <12->   MnSymbolE12
}{}
\DeclareFontShape{OMX}{MnSymbolE}{b}{n}{
    <-6>  MnSymbolE-Bold5
   <6-7>  MnSymbolE-Bold6
   <7-8>  MnSymbolE-Bold7
   <8-9>  MnSymbolE-Bold8
   <9-10> MnSymbolE-Bold9
  <10-12> MnSymbolE-Bold10
  <12->   MnSymbolE-Bold12
}{}

\let\llangle\@undefined
\let\rrangle\@undefined
\DeclareMathDelimiter{\llangle}{\mathopen}%
                     {MnLargeSymbols}{'164}{MnLargeSymbols}{'164}
\DeclareMathDelimiter{\rrangle}{\mathclose}%
                     {MnLargeSymbols}{'171}{MnLargeSymbols}{'171}
\makeatother

\def\bOmega{\boldsymbol{\Omega}}
\def\bPi{\boldsymbol{\Pi}}

\def\bgamma{\boldsymbol{\gamma}}
\def\bGamma{\boldsymbol{\Gamma}}
\def\bzero{\mathbf{0}}
\def\bx{\mathbf{x}}
\def\bY{\mathbf{Y}}
\def\bE#1{\mathbf{E}_{#1}}
\def\be#1{\mathbf{e}_{#1}}

\begin{document}

\footskip=.5in

\begin{abstract}
  We extend the method of Controlled Lagrangians to nonholonomic Euler--Poincar\'e equations with advected parameters, specifically to those mechanical systems on Lie groups whose symmetry is broken not only by a potential force but also by nonholonomic constraints. We introduce advected-parameter-dependent quasivelocities in order to systematically eliminate the Lagrange multipliers in the nonholonomic Euler--Poincar\'e equations. The quasivelocities facilitate the method of Controlled Lagrangians for these systems, and lead to matching conditions that are similar to those by Bloch, Leonard, and Marsden for the standard holonomic Euler--Poincar\'e equation. Our motivating example is what we call the pendulum skate, a simple model of a figure skater developed by Gzenda and Putkaradze. We show that the upright spinning of the pendulum skate is stable under certain conditions, whereas the upright sliding equilibrium is always unstable. Using the matching condition, we derive a control law to stabilize the sliding equilibrium.
\end{abstract}

\maketitle

\section{Introduction}
\subsection{Motivating Example: Pendulum Skate}
Consider what we call the \textit{pendulum skate} shown in \Cref{fig:PendulumSkate}.
It is a simple model for a figure skater developed and analyzed by \citet{GzPu2020}, and consists of a skate---sliding without friction on the surface---with a pendulum rigidly attached to it.
We note that, in this paper, we shall focus on the integrable case where the center of mass is above the point of contact.

\begin{figure}[hbtp]
  \centering
  \includegraphics[width=0.8\linewidth]{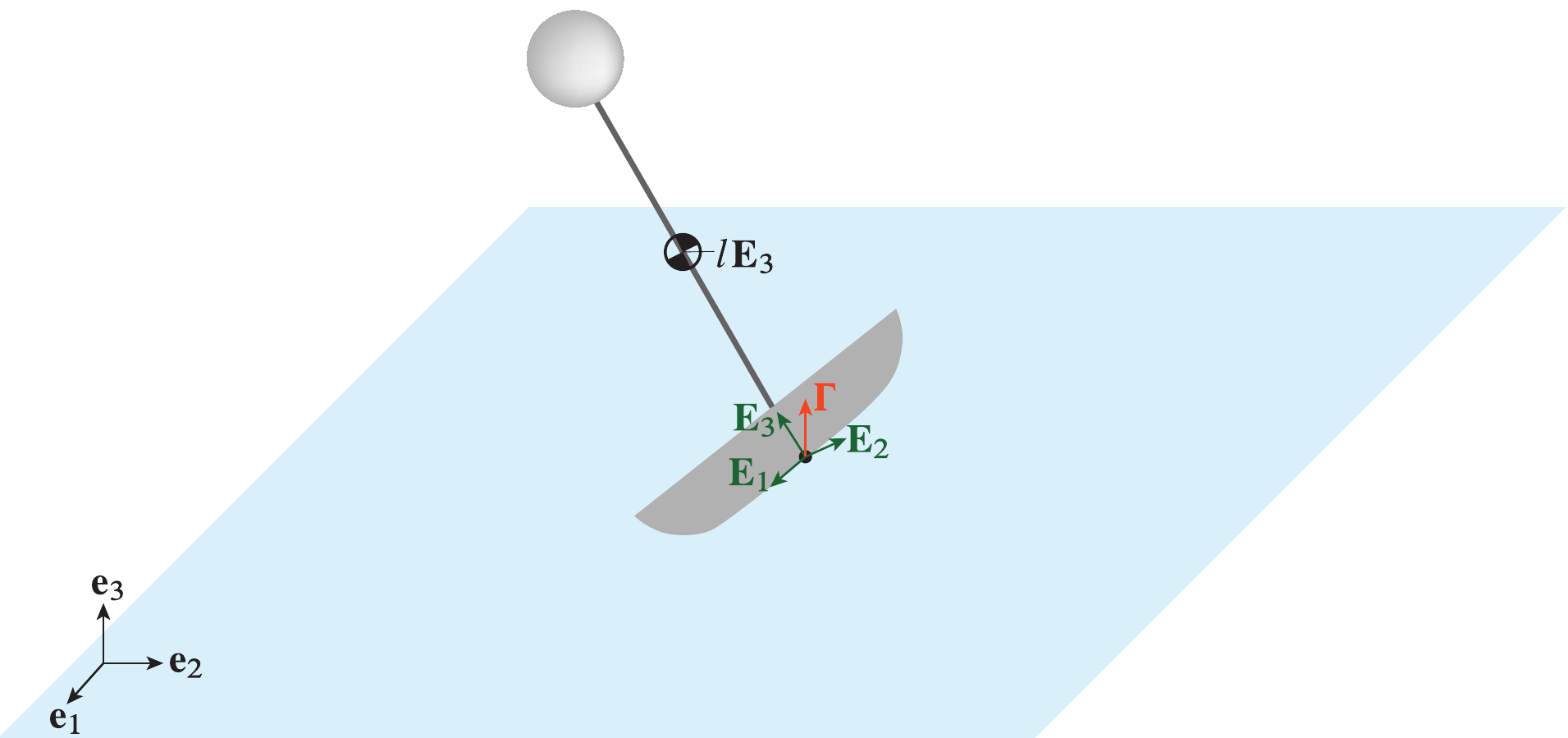}
  \caption{What we call the pendulum skate here is a simple model for a figure skater developed by \citet{GzPu2020}.
    The unit vector $\bGamma$---the vertical upward direction \textit{seen from the body frame} $\{\mathbf{E}_{i}\}_{i=1}^{3}$---is the advected parameter here.}
  \label{fig:PendulumSkate}
\end{figure}

Following \cite{GzPu2020} (see also \Cref{ssec:PendulumSkate} below), the configuration space is the semidirect product Lie group $\SE(3) \defeq \SO(3)\ltimes\R^3$, or the matrix group
\begin{equation}
  \label{eq:SE3}
  \SE(3) = \setdef{
    (R,\bx) \defeq
    \begin{bmatrix}
      R & \bx \\
      \bzero^{T} & 1
    \end{bmatrix}
  }{ R \in \SO(3),\, \bx \in \R^{3} }.
\end{equation}
The gravity breaks the $\SE(3)$-symmetry of the pendulum skate, just as in the well-known example of the heavy top in the semidirect product theory of mechanics~\cite{MaRaWe1984a,MaRaWe1984b,HoMaRa1998a,CeHoMaRa1998}.
Hence \citet{GzPu2020} used the unit vector $\bGamma$---the vertical upward direction seen from the body frame as depicted in \Cref{fig:PendulumSkate} as in the heavy top equations---as an advected parameter vector, and derived its equations of motion as nonholonomic Euler--Poincar\'e equations with advected parameters on $\se(3) \times (\R^{3})^{*}$, where $\se(3)$ is the Lie algebra of $\SE(3)$ and the dual space $(\R^{3})^{*}$ of $\R^{3}$ is for advected parameters $\bGamma$ that take care of the broken $\SE(3)$-symmetry.

It turns out that it is not just the gravity that breaks the symmetry.
The constraints imposed by the rink---including the nonholonomic constraint that the skate cannot slide sideways---also break the symmetry as well.
We shall treat this in detail later, but here is an intuitive explanation of why it breaks the symmetry:
When we start off with the configuration space $\SE(3)$ without the gravity nor the rink, we have an ambient space without any preferred direction or orientation---hence the $\SE(3)$-symmetry.
However, by introducing the rink to the setting, we effectively introduce a special direction---the unit normal vector to the rink---to the ambient space, thereby breaking the $\SE(3)$-symmetry.
This is just like how the gravity breaks the symmetry by introducing the special ``vertical'' direction to the ambient space
that would be otherwise uniform in any direction.

The main motivation for this work is to stabilize nonholonomic mechanical systems on Lie groups with such broken symmetry.
We shall show in \Cref{sec:PendulumSkate} that, for the pendulum skate, the upright spinning and upright sliding motions---ubiquitous in figure skating---are equilibria of the nonholonomic Euler--Poincar\'e equations derived in \cite{GzPu2020}.
As the intuition suggests, the spinning equilibrium is stable only under certain conditions, whereas the sliding equilibrium is always unstable.
We note in passing that these are equilibria of the reduced system and hence are relative equilibria of the original equations of motion.

Motivated by the problem of finding a control law to stabilize the sliding equilibrium, we would like to extend the method of Controlled Lagrangians of \citet{BlLeMa2001} to the Euler--Poincar\'e equations with symmetry-breaking nonholonomic constraints.
Particularly, our main goal is to build on the nonholonomic Euler--Poincar\'e theory of \citet{Sc2002} (see also \citet[Section~12.3]{Ho2011b}) to derive matching conditions for the Controlled Lagrangians applied to such systems.

The pendulum skate indeed necessitates a slight generalization of \cite{Sc2002} because it does not fit into the most general setting of \cite[Section~2.1 and Theorem~1]{Sc2002}.
We note that \citet{GaYo2015} made such a generalization in a more abstract and general Dirac structure setting.
Our focus is rather on first having a concrete expression for the nonholonomic Euler--Poincar\'e equations, particularly on a systematic way to eliminate the Lagrange multipliers in the equations of motion arising from the constraints.

Developing a general method of controlled Lagrangians for nonholonomic systems is challenging, particularly because of the Lagrange multipliers.
Indeed, extensions of the method of Controlled Lagrangians to nonholonomic systems are limited to a very special class of Lagrange--d'Alembert equations; see, e.g., \citet{ZeBlLeMa2000a,ZeBlLeMa2000b,ZeBlMa2002a,ZeBlMa2002b}.
To our knowledge, an extension to nonholonomic Euler--Poincar\'e equations has been done by \citet{Sc2002} only for the so-called Chaplygin top.

\subsection{Main Results and Outline}
We build on the work of \citet{Sc2002} to develop matching conditions for mechanical systems on Lie group $\mathsf{S}$ with symmetry-breaking nonholonomic constraints.
Particularly, we assume: (i)~The left $\mathsf{S}$-invariance of the Lagrangian is broken, but can be recovered using an advected parameter $\Gamma$ (sometimes called a Poisson vector);
(ii)~the nonholonomic constraints are not left-invariant either, but this broken symmetry is also recovered by using the same $\Gamma$.
We shall explain the details of this setting in \Cref{sec:broken_symmetry} using the pendulum skate as an example.

In \Cref{sec:NHEP}, we formulate the reduced Lagrange--d'Alembert principle and derive the nonholonomic Euler--Poincar\'e equations with broken symmetry as \Cref{prop:NHEP}, giving a generalization of the General Theorem of \citet[Theorem~1]{Sc2002}.
However, as mentioned earlier, it is a special case of the Dirac reduction for nonholonomic systems by \citet{GaYo2015}, and is included here only for completeness.

The ideas that nonholonomic constraints may be symmetry-breaking and that the broken symmetry may be recovered using an advected parameter are not new; they were discussed in \cite{Sc2002} as well as \citet{Ta2004}, \citet{BuBu2016}, and \citet{Bu2018}.
Our treatment is more systematic as in \cite{GaYo2015} and applies to more general constraints than those of \cite{Sc2002}.

The above result leads to the notion of $\Gamma$-dependent quasivelocities in \Cref{sec:quasivelocities}.
Quasivelocities have been often used in nonholonomic mechanics; see, e.g., \citet{EhKoMoRi2004,GrLeMaMa2009,BlMaZe2009,BaZeBl2012,Ze2016,ZeLeBl2012,BoMa2015} and references therein.
However, to our knowledge, the idea of advected-parameter-dependent quasivelocities is new.
Those $\Gamma$-dependent quasivelocities help us eliminate the Lagrange multipliers in the nonholonomic Euler--Poincar\'e equations derived in \Cref{prop:NHEP}.

In \Cref{sec:PendulumSkate}, we apply the result from \Cref{sec:quasivelocities} to the pendulum skate, find a family of equilibria including the upright spinning and upright sliding ones, and analyze their stability, specifically the Lyapunov stability of the equilibrium of the reduced system.

In \Cref{sec:ControlledLagrangian}, we show that the nonholonomic Euler--Poincar\'e equations written in the $\Gamma$-dependent quasivelocities help us extend the Controlled Lagrangians of \citet{BlLeMa2001}---developed for the \textit{standard} Euler--Poincar\'e equation---to our nonholonomic setting.
Indeed, the derivation and expressions of the resulting matching conditions in \Cref{prop:matching} are almost the same as those in \cite{BlLeMa2001} thanks to the formulation using the quasivelocities.
The result also generalizes the simpler and ad-hoc matching of \cite{Sc2002} for the Chaplygin top and our own for the pendulum skate in \cite{GaOh2022}.

Finally, in \Cref{sec:stabilization}, we apply the Controlled Lagrangian to find a feedback control to stabilize the sliding equilibrium, and illustrate the result in a numerical simulation.

\section{Broken Symmetry in Lagrangian and Nonholonomic Constraints}
\label{sec:broken_symmetry}
In this section, we would like to use the pendulum skate of \citet{GzPu2020} to describe the basic ideas behind mechanical systems on Lie groups with symmetry-breaking nonholonomic constraints.

\subsection{Pendulum Skate}
\label{ssec:PendulumSkate}
Let $\{\be1,\be2,\be3\}$ and $\{\bE1,\bE2,\bE3\}$ be the spatial and body frames, respectively, where the body frame is aligned with the principal axes of inertia; particularly $\bE1$ is aligned with the edge of the blade as shown in \Cref{fig:PendulumSkate}.
The two frames are related by the rotation matrix $R(t) \in \SO(3)$ whose column vectors represent the body frame viewed in the spatial one at time $t$.

The origin of the body frame is the blade-ice contact point, and has position vector $\bx(t)=(x_1(t),x_2(t),0)$ at time $t$ in the spatial frame.
However, we shall treat the position $\bx$ of the contact point as a vector in $\R^{3}$ and impose $x_{3} = 0$ as a constraint.
Hence, as mentioned earlier, the configuration space is the semidirect product Lie group $\SE(3) \defeq \SO(3) \ltimes \R^{3}$ from \eqref{eq:SE3}, where the multiplication rule is given by
\begin{equation*}
  (R_{0}, \bx_{0}) \cdot (R, \bx) = (R_{0} R, R_{0}\bx + \bx_{0}).
\end{equation*}

Let us find the Lagrangian of the system.
If $t \mapsto s(t) = (R(t),\bx(t))$ is the dynamics of the system in $\SE(3)$, then
\begin{equation*}
  s^{-1} \dot{s} =
  \begin{bmatrix}
    R^{T} \dot{R} & R^{T}\dot{\bx} \\
    \bzero^{T} & 0
  \end{bmatrix}
  \eqdef
  \begin{bmatrix}
    \hat{\Omega} & \bY \\
    \bzero^{T} & 0
  \end{bmatrix}
  \eqdef (\hat{\Omega}, \bY)
  \in \se(3),
\end{equation*}
where $\hat{\Omega} \defeq R^{T} \dot{R}$ is the body angular velocity; $\bY \defeq R^{T} \dot{\bx}$ is the velocity of the blade-ice contact point seen from the body frame.

Suppose that the center of mass is located at ${l\bE3}$ in the body frame as shown in \Cref{fig:PendulumSkate}, and hence, in the spatial frame, is located at $\bx_{\text{cm}}(t) \defeq \bx(t) + l R(t) \bE3$.
Then, the Lagrangian $L_{\be3}\colon T\SE(3)\to\R$ is given by:
\begin{align*}
  L_{\be3}\parentheses{ R, \bx, \dot{R}, \dot{\bx} }
  &\defeq \frac{1}{2}\tr\parentheses{ \dot{R} \mathbb{J} \dot{R}^{T} }
    + \frac{m}{2}\norm{\dot{\bx}_{\text{cm}}}^2
    - m \mathrm{g} \be3^{T} \bx_{\text{cm}}
  \\
  &= \frac{1}{2} \tr\parentheses{ R^{T} \dot{R} \mathbb{J} \dot{R}^{T} R }
    + \frac{m}{2} \norm{\dot{\bx}+\dot{R}l\bE3 }^{2}
    - m \mathrm{g} l \be3^{T} R \bE3
  \\
  &= \frac{1}{2}\tr\parentheses{ \hat{\Omega} \mathbb{J} \hat{\Omega}^{T} }
    + \frac{m}{2}\norm{\bY+l\hat{\Omega}\bE3}^2
    - m \mathrm{g} l (R^{T}\be3)^{T} \bE3,
\end{align*}
where $m$ is the total mass, $\mathrm{g}$ is the gravitational acceleration, $\|\cdot\|$ is the Euclidean norm, and $\mathbb{J}$ is the inertia matrix.

We also identify ${\bOmega=(\Omega_1, \Omega_2, \Omega_3)\in\R^3}$ with $\hat{\Omega}\in\so(3)$ via the hat map~\cite[\S5.3]{HoScSt2009}:
\begin{equation*}
  \widehat{(\,\cdot\,)}\colon{\R}^3\to\se(3);
  \qquad
  \bOmega \mapsto 
  \hat{\Omega} = \begin{bmatrix}
    0 &\mkern-15mu -\Omega_3 & \Omega_2 \\
    \mkern8mu\Omega_3 &\mkern-6mu 0 &\mkern-8mu -\Omega_1 \\
    -\Omega_2 & \Omega_1 & 0
  \end{bmatrix}.
\end{equation*}
Then we have the following correspondence with the cross product: $\hat{\Omega}\mathbf{y}= \bOmega\times\mathbf{y}$ for every $\mathbf{y}\in\R^3$.
So we may use ${(\bOmega,\bY)\in\R^3\times\R^3}$ as coordinates for $\se(3) \cong \R^{3} \times \R^{3}$, and we have
\begin{equation*}
  L_{\be3}\parentheses{ R, \bx, \dot{R}, \dot{\bx} }
  = \frac{1}{2} \bOmega^{T} \mathbb{I} \bOmega
  + \frac{m}{2}\norm{ \bY + l \bOmega \times \bE3 }^2
  - m \mathrm{g} l (R^{T}\be3)^{T} \bE3,
\end{equation*}
where $\mathbb{I} \defeq \tr(\mathbb{J}) \mathsf{I} -\mathbb{J} = \diag(I_{1}, I_{2}, I_{3})$ with $\mathsf{I}$ being the $3\times3$ identity matrix is the (body) moment of inertia tensor; see, e.g., \citet[\!\S7.1]{HoScSt2009}.

The Lagrangian does \textit{not} possess the full $\SE(3)$-symmetry due to the gravity.
Indeed, if $R_{0}^{T} \be3 \neq \be3$ then, in general,
\begin{equation}
  \label{eq:broken_symmetry-PendulumSkate}
  L_{\be3}\parentheses{ R_{0}R, R_{0}\bx + \bx_{0}, R_{0}\dot{R}, R_{0}\dot{\bx} }
  \neq
  L_{\be3}\parentheses{ R, \bx, \dot{R}, \dot{\bx} },
\end{equation}
although the equality holds if $R_{0}^{T} \be3 = \be3$.
In the next subsection, we shall discuss a general theory on how to recover the full symmetry of such a Lagrangian with broken symmetry, and come back to the pendulum skate as an example.

\subsection{Broken Symmetry in Lagrangian}
\label{ssec:Lagrangian}
Let $L_{\gamma_{0}}\colon T\mathsf{S} \to \R$ be a Lagrangian with a fixed parameter $\gamma_{0} \in X^{*}$, where $X^{*}$ is the dual of a vector space $X$.
For each $s_{0} \in \mathsf{S}$, let $\mathsf{L}_{s_{0}}\colon \mathsf{S} \to \mathsf{S}; s \mapsto s_{0} s$ be the left translation, and $T\mathsf{L}_{s_{0}}\colon \dot{s} \mapsto T\mathsf{L}_{s_{0}}(\dot{s}) \eqdef s_{0} \dot{s}$ be its tangent lift.

Suppose that the Lagrangian $L_{\gamma_{0}}$ does not possess $\mathsf{S}$-symmetry but that we may recover the symmetry as follows:
\begin{assumption}
  \label{asm:1}
  \leavevmode
  \begin{enumerate}[(i)]
  \item The Lagrangian $L_{\gamma_{0}}$ is not $\mathsf{S}$-invariant: $L_{\gamma_{0}} \circ T\mathsf{L}_{s_{0}} \neq L_{\gamma_{0}}$ for some $s_{0} \in \mathsf{S}$.
    \smallskip
  \item There exist an extended Lagrangian $L\colon T\mathsf{S} \times X^{*} \to \R$ and a representation
    \begin{equation}
      \label{eq:S-rep_on_X}
      \mathsf{S} \times X \to X;
      \qquad
      (s, \mathsf{x}) \mapsto s \mathsf{x}.
    \end{equation}
    such that the following are satisfied:
    \smallskip
    \begin{itemize}
    \item $L(\,\cdot\,, \gamma_{0}) = L_{\gamma_{0}}$.
      \smallskip
    \item With the induced representation $(s, \gamma) \mapsto s \gamma$ on $X^{*}$, i.e.,
    \begin{equation}
      \label{eq:S-rep_on_X*}
      \mathsf{S} \times X^{*} \to X^{*};
      \qquad
      (s, \gamma) \mapsto s \gamma
      \quad\text{such that}\quad
      \ip{ s \gamma }{ \mathsf{x} } = \ip{ \gamma }{ s^{-1} \mathsf{x} }
      \quad
      \forall \mathsf{x} \in X,
    \end{equation}
    the (left) $\mathsf{S}$-symmetry of the Lagrangian is recovered:
    \begin{equation*}
      L( s_{0}s,  s_{0}\dot{s}, s_{0}\gamma ) = L(s, \dot{s}, \gamma)
      \qquad
      \forall s_{0}, s \in \mathsf{S}
      \quad
      \forall \gamma \in X^{*}.
    \end{equation*}
    \end{itemize}
  \end{enumerate}
  As a result, we may define the reduced Lagrangian as follows:
  \begin{equation}
    \label{eq:ell}
    \ell\colon \mathfrak{s} \times X^{*} \to \R;
    \qquad
    \ell(\xi, \Gamma) \defeq L(e, \xi, \Gamma),
  \end{equation}
  where $\mathfrak{s}$ is the Lie algebra of $\mathsf{S}$ and $e \in \mathsf{S}$ is the identity.
\end{assumption}

\begin{remark}[Roles of advected parameter $\Gamma$]
  \label{rem:role_of_Gamma}
  \leavevmode
  \begin{enumerate}[(i)]
  \item Note that our original problem had the Lagrangian $L_{\gamma_{0}}(s, \dot{s}) = L(s, \dot{s}, \gamma_{0})$, but then the above $\mathsf{S}$-symmetry implies that this is in turn equal to $L(e, s^{-1}\dot{s}, s^{-1}\gamma_{0})$.
    Hence, in view of \eqref{eq:ell}, the advected parameter $\Gamma$ and the (static) parameter $\gamma_{0}$ are related as $\Gamma = s^{-1}\gamma_{0}$.
    \smallskip
  \item In exchange of recovering the $\mathsf{S}$-symmetry, the phase space is extended by attaching $X^{*}$ to $\mathfrak{s}$, hence the reduced equations involve both $\xi$ and $\Gamma$ as we shall see in \Cref{prop:NHEP}.
  \end{enumerate}
\end{remark}

\begin{example}[Lagrangian of pendulum skate~\cite{GzPu2020}]
  \label{ex:Lagrangian-PendulumSkate}
  For the pendulum skate from \Cref{ssec:PendulumSkate}, we define the extended Lagrangian $L\colon T^{*}\SE(3) \times (\R^{3})^{*} \to \R$ as follows:
  \begin{equation*}
    L\parentheses{ R, \bx, \dot{R}, \dot{\bx}, \bgamma }
    = \frac{1}{2} \bOmega^{T} \mathbb{I} \bOmega
    + \frac{m}{2}\norm{ \bY + l \bOmega \times \bE3 }^2
    - m \mathrm{g} l (R^{T}\bgamma)^{T} \bE3
  \end{equation*}
  so that $L\parentheses{ R, \bx, \dot{R}, \dot{\bx}, \be3 } = L_{\be3}\parentheses{ R, \bx, \dot{R}, \dot{\bx} }$.

  We also define an $\SE(3)$-representation on $\R^{3}$ by setting $(R, \bx) \mathbf{y} \defeq R \mathbf{y}$.
  Then, identifying $(\R^{3})^{*}$ with $\R^{3}$ via the dot product, we have
  \begin{equation*}
    \parentheses{ (R, \bx) \bgamma } \cdot \mathbf{y}
    = \bgamma \cdot \parentheses{ (R, \bx)^{-1} \mathbf{y} }
    = \bgamma \cdot \parentheses{ R^{-1} \mathbf{y} }
    = (R \bgamma ) \cdot \mathbf{y}.
  \end{equation*}
  Therefore, we have
  \begin{equation}
    \label{eq:SE3-action_on_gamma}
    \SE(3) \times (\R^{3})^{*} \to (\R^{3})^{*};
    \qquad
    ((R, \bx), \bgamma) \mapsto R \bgamma
  \end{equation}
  using the standard matrix-vector multiplication.

  Then we see that the $\SE(3)$-symmetry is recovered:
  For every $(R_{0}, \bx_{0}), (R, \bx) \in \SE(3)$ and every $\bgamma \in \R^{3}$,
  \begin{equation*}
    L\parentheses{ R_{0}R, R_{0}\bx + \bx_{0}, R_{0}\dot{R}, R_{0}\dot{\bx}, R_{0}\bgamma }
    = L\parentheses{ R, \bx, \dot{R}, \dot{\bx}, \bgamma }.
  \end{equation*}
  Therefore, we may define the reduced Lagrangian
  \begin{equation}
    \label{eq:ell-PendulumSkate}
    \begin{split}
      &\ell \colon \se(3) \times (\R^{3})^{*} \cong \R^{3} \times \R^{3} \times \R^{3} \to \R; \\
      &\ell(\bOmega, \bY, \bGamma) \defeq \frac{1}{2} \bOmega^{T} \mathbb{I} \bOmega
      + \frac{m}{2}\norm{ \bY + l \bOmega \times \bE3 }^2
    - m \mathrm{g} l \bGamma^{T} \bE3,
    \end{split}
  \end{equation}
  which agrees with \cite[Eq.~(1)]{GzPu2020}.
  Since $\gamma_{0} = \be3$ for our Lagrangian, the advected parameter is $\bGamma = R^{T} \gamma_{0} = R^{T} \be3$, which gives the vertical upward direction (essentially the direction of gravity) seen from the body frame; see \Cref{fig:PendulumSkate}.
\end{example}

\begin{remark}[Intuition behind symmetry recovery]
  Here is an intuitive interpretation of how the symmetry recovery works in \Cref{ex:Lagrangian-PendulumSkate} above.
  When we talked about the broken $\SE(3)$-symmetry in \eqref{eq:broken_symmetry-PendulumSkate}, the vertical upward direction $\be3$ (essentially the direction of gravity) was fixed, and the $\SE(3)$-action $(R_{0}, \bx_{0}) \cdot (R, \bx) = (R_{0} R, R_{0}\bx + \bx_{0})$ rotates the pendulum skate by $R_{0}$ and translates it by $\bx_{0}$ but \textit{left the direction of the gravity unchanged}, resulting in a system whose direction of gravity is different from the original configuration $(R,\bx)$; hence the symmetry is broken.
  On the other hand, when we introduced $\bgamma$ as a new variable in the Lagrangian above and let $\SE(3)$ act on $\gamma$ as defined in \eqref{eq:SE3-action_on_gamma}, we \textit{co-rotated the direction of the gravity along with the pendulum skate}, resulting in a system with the same relative direction of gravity as the original one; hence the whole system---now involving the variable direction of gravity---possesses the $\SE(3)$-symmetry.
\end{remark}

\subsection{Broken Symmetry in Nonholonomic Constraints}
\label{ssec:constraints}
Following \citet[Section~4.2]{GaYo2015}, we assume that the system is subject to nonholonomic constraints with a fixed parameter $\gamma_{0} \in X^{*}$, that is, the constraint is defined by a corank $r$ smooth distribution $\mathcal{D}_{\gamma_{0}} \subset T\mathsf{S}$ so that the dynamics $t \mapsto s(t) \in \mathsf{S}$ satisfies $\dot{s}(t) \in \mathcal{D}_{\gamma_{0}}(s(t))$.
In other words, we have 1-forms $\{ \Psi^{a}_{\gamma_{0}} \}_{a=1}^{r}$ on $\mathsf{S}$ such that the annihilator $\mathcal{D}_{\gamma_{0}}^{\circ}(s) = \Span\{ \Psi^{a}_{\gamma_{0}}(s) \}_{a=1}^{r}$, i.e., $\dot{s}(t) \in \ker \Psi^{a}_{\gamma_{0}}(s(t))$ for any $a \in \{1, \dots, r \}$.

Let us now assume that $\mathcal{D}_{\gamma_{0}}$ does not possess $\mathsf{S}$-symmetry but that we may recover the symmetry in a similar way as above:
\begin{assumption}
  \label{asm:2}
  \leavevmode
  \begin{enumerate}[(i)]
  \item $\mathcal{D}_{\gamma_{0}}$ is not $\mathsf{S}$-invariant: $(\mathsf{L}_{s_{0}})_{*}\mathcal{D}_{\gamma_{0}} \neq \mathcal{D}_{\gamma_{0}}$ and hence $\mathsf{L}_{s_{0}}^{*} \Psi^{a}_{\gamma_{0}} \neq \Psi^{a}_{\gamma_{0}}$ for some $s_{0} \in \mathsf{S}$.
    %
    \smallskip
  \item There exist $r$ 1-forms on $\mathsf{S}$ with a parameter in $X^{*}$, i.e.,
    \begin{equation*}
      \Psi^{a}\colon \mathsf{S} \times X^{*} \to T^{*}\mathsf{S}
      \quad\text{with}\quad
      \Psi^{a}(s, \gamma) \in T_{s}^{*}\mathsf{S}
      \quad
      \forall s \in \mathsf{S}
      \quad
      \forall \gamma \in X^{*}
      \quad
      \forall a \in \{1, \dots, r \}
    \end{equation*}
    such that the following are satisfied:
    \smallskip
    \begin{itemize}
    \item $\Psi^{a}(\,\cdot\,, \gamma_{0}) = \Psi^{a}_{\gamma_{0}}$ for every $a \in \{1, \dots, r \}$.
      \smallskip
    \item Using the same $\mathsf{S}$-representation on $X^{*}$ from \Cref{asm:1}, the (left) $\mathsf{S}$-symmetry of the 1-forms is recovered:
      \begin{equation}
        \label{eq:A-symmetry}
        (\mathsf{L}_{s_{0}}^{*} \Psi^{a})(\,\cdot\,, s_{0} \gamma) = \Psi^{a}( \,\cdot\,, \gamma )
        \qquad
        \forall s_{0} \in \mathsf{S}
        \quad
        \forall \gamma \in X^{*}
        \quad
        \forall a \in \{1, \dots, r \}.
      \end{equation}
    \end{itemize}
  \end{enumerate}
  As a result, we have the $\gamma$-dependent distribution
  \begin{equation*}
    \mathcal{D}(s,\gamma) \defeq \bigcap_{a=1}^{r} \ker \Psi^{a}(s,\gamma) \subset T_{s}\mathsf{S}
    \qquad
    \forall s \in \mathsf{S}
    \quad
    \forall \gamma \in X^{*}
  \end{equation*}
  with $\mathsf{S}$-symmetry:
  \begin{equation*}
    T_{s}\mathsf{L}_{s_{0}}( \mathcal{D}(s, \gamma) ) =  \mathcal{D}( s_{0}s, s_{0}\gamma )
    \quad
    \forall s, s_{0} \in \mathsf{S}
    \quad
    \forall \gamma \in X^{*}.
  \end{equation*}
  We may then define the following $\Gamma$-dependent subspace in $\mathfrak{s}$ and $\Gamma$-dependent elements in $\mathfrak{s}^{*}$:
  \begin{equation}
    \label{eq:a-def}
    \mathfrak{d}(\Gamma) \defeq \mathcal{D}(e,\Gamma) \subset \mathfrak{s},
    \qquad
    \psi^{a}(\Gamma) \defeq \Psi^{a}(e, \Gamma) \in \mathfrak{s}^{*}
    \quad
    \forall a \in \{1, \dots, r \}
  \end{equation}
  so that the annihilator $\mathfrak{d}^{\circ}(\Gamma) = \Span\{ \psi^{a}(\Gamma) \}_{a=1}^{r} \subset \mathfrak{s}^{*}$.
\end{assumption}

\begin{remark}
  Note that the original distribution is $\mathcal{D}_{\gamma_{0}}(s) = \mathcal{D}(s, \gamma_{0})$, but then the above $\mathsf{S}$-symmetry implies that this is in turn equal to $T_{e}\mathsf{L}_{s}(\mathcal{D}(e, s^{-1}\gamma_{0}))$.
  Upon setting $\Gamma = s^{-1}\gamma_{0}$ as mentioned in \Cref{rem:role_of_Gamma}, one sees that this is the left translation of the subspace $\mathfrak{d}(\Gamma) \defeq \mathcal{D}(e,\Gamma) = \bigcap_{a=1}^{r} \ker \Psi^{a}(e,\Gamma)$ in $\mathfrak{s}$.
\end{remark}

\begin{example}[No-side-sliding constraint of pendulum skate]
  \label{ex:no-side-sliding}
  The skate blade moves without friction, but with a constraint that prohibits motions perpendicular to its edge.
  This means that the spatial velocity $\dot{\bx}$ has no components in the direction perpendicular to the plane spanned by $\{R\bE1,\be3\}$, i.e., $\ip{\dot{\bx}}{R\bE1\times\be3}=0$ as shown in \Cref{fig:NoSideSliding}.
  In other words, setting $\Psi_{\be3}(R,\bx) = (R \bE1 \times \be3)^{T} \d\bx$, one may write the constraint as $\dot{\bx} \in \ker \Psi_{\be3}$.
  \begin{figure}[hbtp]
    \centering
    \includegraphics[width=0.8
    \linewidth]{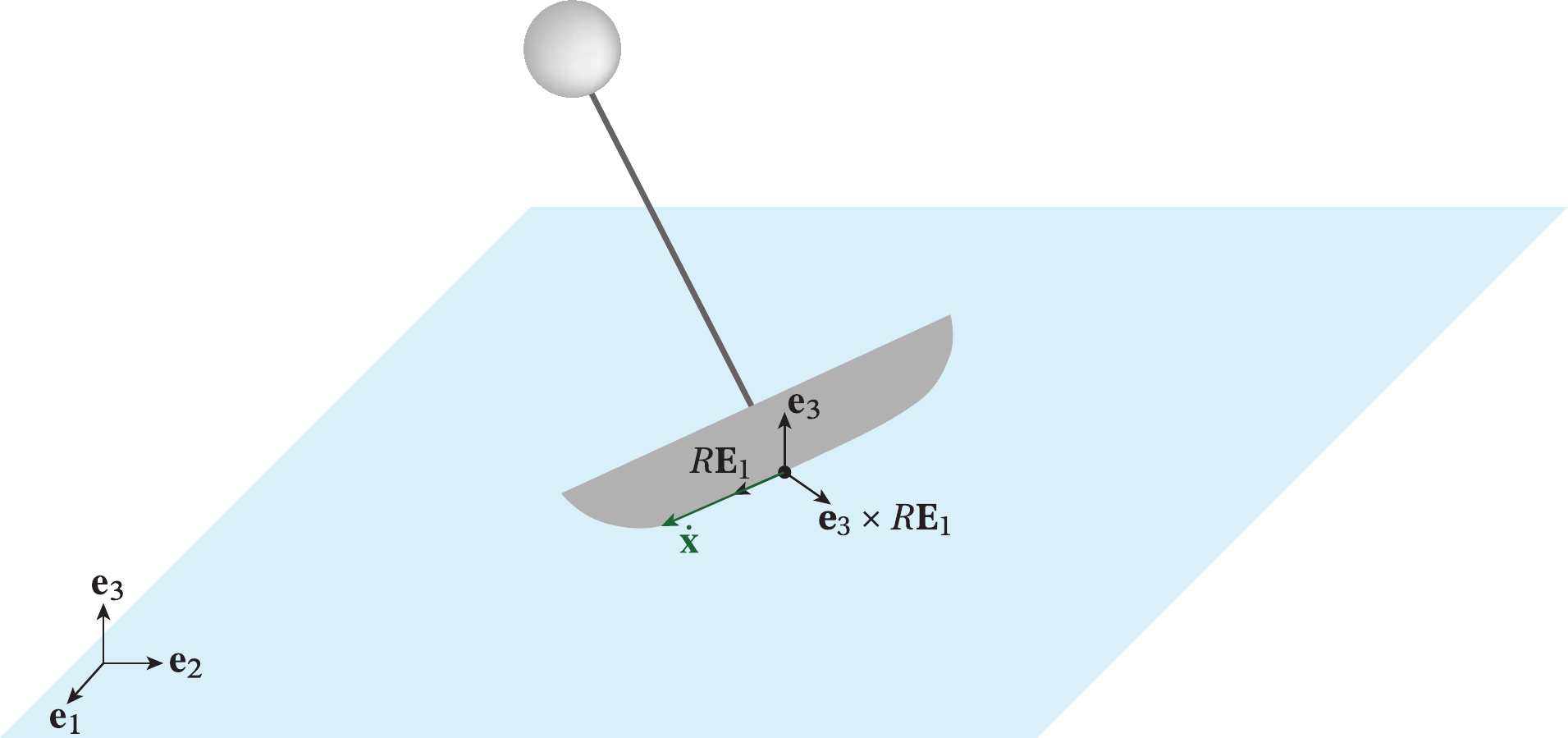}
    \caption{No side sliding.}
    \label{fig:NoSideSliding}
  \end{figure}
  It is easy to see that the $\SE(3)$-symmetry is broken:
  If we take $(R_{0},\bx_{0}) \in \SE(3)$ with $R_{0}^{T} \be3 \neq \be3$, then
  \begin{align*}
    (\mathsf{L}_{(R_{0},\bx_{0})}^{*} \Psi_{\be3})(R,\bx)
    &= T_{(R,\bx)}^{*}\mathsf{L}_{(R_{0},\bx_{0})} \parentheses{ \Psi_{\be3}(R_{0}R,R_{0}\bx) }   \\
    &= ((R_{0} R \bE1) \times \be3)^{T} \d(R_{0}\bx) \\
    &= \parentheses{ (R \bE1) \times (R_{0}^{T}\be3) }^{T} \d\bx \\
    &\neq \Psi_{\be3}(R,\bx)
  \end{align*}
  in general, although $\mathsf{L}_{(R_{0},\bx_{0})}^{*} \Psi_{\be3} = \Psi_{\be3}$ if $R_{0}^{T} \be3 = \be3$.
  
  One notices that this is the same type of broken symmetry as in the Lagrangian from \Cref{ex:Lagrangian-PendulumSkate} caused by the gravity.
  This suggests the same remedy applied to the Lagrangian would recover the $\SE(3)$-symmetry of the constraint too.
  Indeed, define
  \begin{equation*}
    \Psi\colon \SE(3) \times (\R^{3})^{*} \to T^{*}\SE(3);
    \qquad
    \Psi((R,\bx), \bgamma) \defeq ((R \bE1) \times \bgamma)^{T} \d\bx
  \end{equation*}
  so that $\Psi((R,\bx), \be3) = \Psi_{\be3}(R,\bx)$.
  Then we see that, for every $(R_{0}, \bx_{0}), (R, \bx) \in \SE(3)$ and every $\bgamma \in \R^{3}$,
  \begin{align*}
    (\mathsf{L}_{(R_{0},\bx_{0})}^{*} \Psi)((R,\bx), R_{0}\bgamma)
    &= T_{(R,\bx)}^{*}\mathsf{L}_{(R_{0},\bx_{0})} \parentheses{ \Psi_{\be3}((R_{0}R,R_{0}\bx), R_{0}\gamma) } \\
     &= \parentheses{ (R_{0} R \bE1) \times (R_{0}\bgamma) }^{T} \d(R_{0}\bx) \\
    &=  \parentheses{ (R \bE1) \times \bgamma }^{T} \d\bx \\
    &= \Psi( (R, \bx), \bgamma).
  \end{align*}
  Therefore, we define the $\bGamma$-dependent element
  \begin{equation*}
    \psi^{3}(\bGamma) \defeq \Psi( (I, \bzero), \bGamma)
    = (\bzero, \bE1 \times \bGamma) \in \se(3)^{*} \cong \R^{3} \times \R^{3},
  \end{equation*}
  where we used the superscript $3$ because there are two other constraints as we shall see in \Cref{ex:PendulumSkate}.
\end{example}

\begin{remark}
  \label{rem:why_same_Gamma}
  Why does the same advected parameter $\bGamma$---which we used to recover the broken symmetry \textit{due to the gravity}---work in order to recover the broken symmetry \textit{due to the nonholonomic constraint} as well?
  The nonholonomic constraint is characterized by how one introduces the rink into the system.
  In this problem, we introduced the rink as a horizontal plane---perpendicular to the direction of gravity.
  So $\bGamma$ gives \textit{both} the direction of gravity \textit{and} the orientation of the rink seen from the body frame.
\end{remark}

\section{Nonholonomic Euler--Poincar\'e Equation with Advected Parameters}
\label{sec:NHEP}
This section gives a review of the reduced Lagrange--d'Alembert principle for mechanical systems with broken symmetry.
Our result gives a slight generalization of the General Theorem of \citet[Theorem~1]{Sc2002}.
However, as mentioned earlier, ours is a special case of \citet[Theorem~4.3]{GaYo2015} as well, and so we include this result only for completeness.

\subsection{Reduced Lagrange--d'Alembert Principle with Broken Symmetry}
We would like to find the reduced equations of motion exploiting the recovered symmetry.
For our setting, one needs to turn the variational principle of \citet[Theorem~3.1]{HoMaRa1998a} (see also \citet[Theorem~1.1]{CeHoMaRa1998}) into a Lagrange--d'Alembert-type incorporating the symmetry-breaking nonholonomic constraints.
We note that this is done in \citet[Theorem~1]{Sc2002} with a semidirect product Lie group $\mathsf{S} = \mathsf{G} \ltimes V$ assuming a special class of nonholonomic constraints.

Before describing our version of the reduced Lagrange--d'Alembert principle with broken symmetry, let us introduce some notation used in the result to follow.
For any curve $t \mapsto s(t) \in \mathsf{S}$, we define $t \mapsto \xi(t) \defeq s(t)^{-1} \dot{s}(t) \in \mathfrak{s}$; conversely, given $t \mapsto \xi(t) \in \mathfrak{s}$, we define $t \mapsto s(t) \in \mathsf{S}$ via $\dot{s}(t) = s(t) \xi(t)$ with the left translation and initial condition $s(0) = s_{0}$ with a certain fixed element $s_{0} \in \mathsf{S}$.
Similarly, for any (infinitesimal) variation $t \mapsto \delta s(t) \in T_{s(t)}\mathsf{S}$ of the curve $t \mapsto s(t)$, we define $t \mapsto \eta(t) \defeq s(t)^{-1} \delta s(t)$ and conversely as well.

We shall also use the Lie algebra representation
\begin{equation*}
  \mathfrak{s} \times X \to X;
  \qquad
  (\xi, \mathsf{x}) \mapsto \dzero{\eps}{\exp(\eps\xi) \mathsf{x}} \eqdef \xi \mathsf{x},
\end{equation*}
and 
\begin{equation}
  \label{eq:s-rep_on_X*}
  \mathfrak{s} \times X^{*} \to X^{*};
  \qquad
  (\xi, \gamma) \mapsto \xi \gamma
  \quad\text{so that}\quad
  \ip{ \xi \gamma }{ \mathsf{x} } = -\ip{ \gamma }{ \xi \mathsf{x} }
  \quad
  \forall \mathsf{x} \in X,
\end{equation}
as well as the momentum map $\mathbf{K}\colon X \times X^{*} \to \mathfrak{s}^{*}$ defined by
\begin{equation}
  \label{eq:K}
  \ip{ \mathbf{K}(\mathsf{x},\Gamma) }{ \xi } = \ip{ \Gamma }{ \xi \mathsf{x} } = -\ip{ \xi \Gamma }{ \mathsf{x} }
  \quad
  \forall \xi \in \mathfrak{s},
\end{equation}
where $\xi \mathsf{x}$ is defined in the similar way as in \eqref{eq:s-rep_on_X*} using \eqref{eq:S-rep_on_X}.

Also, for any smooth function $f\colon E \to \R$ on a real vector space $E$, let us define its functional derivative $\tfd{f}{x} \in E^{*}$ at $x \in E$ such that, for any $\delta x \in E$, under the natural dual pairing $\ip{\,\cdot\,}{\,\cdot\,}\colon E^{*} \times E \to \R$,
\begin{equation*}
  \ip{ \fd{f}{x} }{ \delta x } = \dzero{\eps}{ f(x + \eps\delta x) }.
\end{equation*}

\begin{proposition}[Reduced Lagrange--d'Alembert Principle with Broken Symmetry]
  \label{prop:NHEP}
  Let $\gamma_{0} \in X^{*}$ be fixed, and suppose that the Lagrangian $L_{\gamma_{0}}\colon T\mathsf{S} \to \R$ and the distribution $\mathcal{D}_{\gamma_{0}} \subset T\mathsf{S}$ defining nonholonomic constraints satisfy \Cref{asm:1,asm:2} from \Cref{ssec:Lagrangian,ssec:constraints}, respectively.
  
  Then the following are equivalent:
  \begin{enumerate}[(i)]
  \item The curve $t \mapsto s(t) \in \mathsf{S}$ with the constraint $\dot{s}(t) \in \mathcal{D}_{\gamma_{0}}(s(t))$ satisfies the Lagrange--d'Alembert principle
    \begin{equation*}
      \delta \int_{t_{0}}^{t_{1}} L_{\gamma_{0}}\parentheses{ s(t), \dot{s}(t) }\,dt = 0
    \end{equation*}
    subject to $\delta s(t_{0}) = \delta s(t_{1}) = 0$ and $\delta s(t) \in \mathcal{D}_{\gamma_{0}}(s(t))$ for any $t \in (t_{0}, t_{1})$.
    \label{part:L-dA}
  \item The curve $t \mapsto \xi(t) \in \mathfrak{s}$ along with
    \begin{equation}
      \label{eq:Gamma}
      t \mapsto \Gamma(t) \defeq s(t)^{-1} \gamma_{0} \in X^{*}
    \end{equation}
    and with the constraint
    \begin{equation}
      \label{eq:zeta-constraint}
      \xi(t) \in \mathfrak{d}(\Gamma(t))
      \iff
      \ip{ \psi^{a}(\Gamma(t)) }{ \xi(t) } = 0
      \quad
      \forall t \in [t_{0}, t_{1}]
      \quad
      \forall a \in \{1, \dots, r \}
    \end{equation}
    satisfies the following reduced Lagrange--d'Alembert principle in terms of the reduced Lagrangian $\ell\colon \mathfrak{s} \times X^{*} \to \R$ defined in \eqref{eq:ell}:
     \begin{equation}
       \label{eq:reduced_LdA}
      \delta \int_{t_{0}}^{t_{1}} \ell( \xi(t), \Gamma(t) )\,dt = 0,
    \end{equation}
    where variations of $t \mapsto (\xi(t), \Gamma(t))$ are subject to the constraints
    \begin{equation}
      \label{eq:zeta_Gamma-constraint}
      \delta\xi = \dot{\eta} + \ad_{\xi} \eta,
      \qquad
      \delta\Gamma = -\eta\,\Gamma
    \end{equation}
    for every curve $t \mapsto \eta(t) \in \mathfrak{s}$ satisfying
    \begin{equation}
      \label{eq:eta-constraint}
      \delta\eta(t_{0}) = \delta\eta(t_{1}) = 0,
      \qquad
      \ip{ \psi^{a}(\Gamma(t)) }{ \eta(t) } = 0
      \quad
      \forall t \in [t_{0}, t_{1}]
      \quad
      \forall a \in \{1, \dots, r \}.
    \end{equation}
    \label{part:reduced_L-dA}
  \item The curve $t \mapsto (\xi(t),\Gamma(t))$ with $\xi(t) \in \mathfrak{d}(\Gamma(t))$ satisfies the nonholonomic Euler--Poincar\'e equations with advected parameter $\Gamma$:
    \begin{subequations}
      \label{eq:NHEP}
      \begin{align}
        \od{}{t}\parentheses{ \fd{\ell}{\xi} } &= \ad_{\xi}^{*} \fd{\ell}{\xi} + \mathbf{K}\parentheses{\fd{\ell}{\Gamma}, \Gamma} + \lambda_{a} \psi^{a}(\Gamma), \label{eq:NHEP1}
        \\
        \dot\Gamma &= -\xi \Gamma \label{eq:NHEP2}
      \end{align}
    \end{subequations}
    with $\Gamma(t_{0}) = s(t_{0})^{-1} \gamma_{0}$, where $\{ \lambda_{a} \}_{a=1}^{r}$ are Lagrange multipliers.
    \label{part:NHEP}
  \end{enumerate}
\end{proposition}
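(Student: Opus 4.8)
The plan is to prove the cycle of equivalences by establishing \ref{part:L-dA}$\iff$\ref{part:reduced_L-dA} and \ref{part:reduced_L-dA}$\iff$\ref{part:NHEP} separately, adapting the Euler--Poincar\'e reduction of \citet{HoMaRa1998a} to the Lagrange--d'Alembert setting. For \ref{part:L-dA}$\iff$\ref{part:reduced_L-dA}, the first step is to rewrite the action using the recovered symmetry: setting $\xi = s^{-1}\dot{s}$ and $\Gamma = s^{-1}\gamma_{0}$, \Cref{asm:1} gives $L_{\gamma_{0}}(s,\dot{s}) = L(s,\dot{s},\gamma_{0}) = L(e,\xi,\Gamma) = \ell(\xi,\Gamma)$, so the two action functionals coincide along corresponding curves. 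Next I would translate the constraints: by the $\mathsf{S}$-symmetry of the distribution in \Cref{asm:2}, one has $\mathcal{D}_{\gamma_{0}}(s) = T_{e}\mathsf{L}_{s}(\mathfrak{d}(\Gamma))$, so the map $\delta s \mapsto \eta \defeq s^{-1}\delta s$ carries $\mathcal{D}_{\gamma_{0}}(s)$ bijectively onto $\mathfrak{d}(\Gamma)$; thus $\delta s \in \mathcal{D}_{\gamma_{0}}(s)$ becomes $\ip{\psi^{a}(\Gamma)}{\eta} = 0$, the velocity constraint $\dot{s}\in\mathcal{D}_{\gamma_{0}}(s)$ becomes $\xi\in\mathfrak{d}(\Gamma)$, and $\delta s(t_{0})=\delta s(t_{1})=0$ becomes $\eta(t_{0})=\eta(t_{1})=0$. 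Finally I would derive the reduced variation formulas: $\delta\xi = \dot\eta + \ad_{\xi}\eta$ is the standard left-invariant identity coming from $\delta\dot{s}=\od{}{t}\delta s$, while $\delta\Gamma = -\eta\,\Gamma$ follows by varying $\Gamma = s^{-1}\gamma_{0}$ and reading off the infinitesimal representation \eqref{eq:s-rep_on_X*}.

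For \ref{part:reduced_L-dA}$\iff$\ref{part:NHEP}, I would compute the first variation of the reduced action directly:
\[
  \delta\int_{t_{0}}^{t_{1}}\ell(\xi,\Gamma)\,dt
  = \int_{t_{0}}^{t_{1}}\left(\left\langle \fd{\ell}{\xi},\, \dot\eta + \ad_{\xi}\eta\right\rangle + \left\langle \delta\Gamma,\, \fd{\ell}{\Gamma}\right\rangle\right)dt.
\]
Integrating the $\dot\eta$ term by parts (the boundary term vanishing because $\eta(t_{0})=\eta(t_{1})=0$), rewriting $\ip{\fd{\ell}{\xi}}{\ad_{\xi}\eta} = \ip{\ad_{\xi}^{*}\fd{\ell}{\xi}}{\eta}$ via the definition of the coadjoint action, and using $\delta\Gamma = -\eta\,\Gamma$ together with the momentum-map identity \eqref{eq:K} to convert $\ip{-\eta\,\Gamma}{\fd{\ell}{\Gamma}} = \ip{\mathbf{K}(\fd{\ell}{\Gamma},\Gamma)}{\eta}$, I collect everything into $\int_{t_{0}}^{t_{1}}\ip{-\od{}{t}\fd{\ell}{\xi} + \ad_{\xi}^{*}\fd{\ell}{\xi} + \mathbf{K}(\fd{\ell}{\Gamma},\Gamma)}{\eta}\,dt = 0$. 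Since this holds for all $\eta$ annihilated pointwise by $\{\psi^{a}(\Gamma)\}$ with vanishing endpoints, the fundamental lemma forces the bracketed covector to lie in the annihilator $\mathfrak{d}^{\circ}(\Gamma) = \Span\{\psi^{a}(\Gamma)\}$ at each instant, which is precisely \eqref{eq:NHEP1} with multipliers $\lambda_{a}$; equation \eqref{eq:NHEP2} is the evolution counterpart of $\delta\Gamma = -\eta\,\Gamma$, obtained by differentiating $\Gamma = s^{-1}\gamma_{0}$ in time.

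For the reconstruction direction I would note that, given $(\xi,\Gamma)$ solving \ref{part:NHEP}, defining $s$ by $\dot{s}=s\xi$ with $s(t_{0})=s_{0}$ makes $s^{-1}\gamma_{0}$ satisfy the same linear advection equation $\dot\Gamma = -\xi\Gamma$ with matching initial data, so uniqueness of solutions gives $\Gamma = s^{-1}\gamma_{0}$ throughout and all correspondences close up. The main obstacle I expect is not any single computation but rather the careful bookkeeping of the two dual representations of $\mathfrak{s}$ on $X$ and on $X^{*}$ and the directions of the pairings, so that the momentum-map term $\mathbf{K}(\fd{\ell}{\Gamma},\Gamma)$ emerges with the correct sign; this, together with verifying that the reduced constrained variations exhaust exactly those $\eta\in\mathfrak{d}(\Gamma)$ with vanishing endpoints (so that the fundamental lemma applies with the constraint distribution rather than all of $\mathfrak{s}$), is where the semidirect-product and nonholonomic features interact and where placement or sign errors are most likely.
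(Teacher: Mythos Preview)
Your proposal is correct and follows essentially the same route as the paper's proof: equality of the two action integrals via the recovered $\mathsf{S}$-symmetry, translation of the constraint $\delta s\in\mathcal{D}_{\gamma_0}(s)$ into $\ip{\psi^a(\Gamma)}{\eta}=0$ using \Cref{asm:2}, the standard $\delta\xi=\dot\eta+\ad_\xi\eta$ and $\delta\Gamma=-\eta\,\Gamma$ identities, and then the integration-by-parts computation that produces the $\mathbf{K}$-term and the multiplier term via the fundamental lemma restricted to $\mathfrak{d}(\Gamma)$. Your reconstruction argument for \ref{part:NHEP}$\Rightarrow$\ref{part:reduced_L-dA} (defining $s$ by $\dot s=s\xi$ and invoking uniqueness for the advection equation) is slightly more explicit than the paper's, but the substance is the same.
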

\begin{proof}
  Let us first show the equivalence between \eqref{part:L-dA} and \eqref{part:reduced_L-dA}.
  First, let us show that the two action integrals are equal.
  Indeed, using the definition \eqref{eq:ell} of $\ell$,
  \begin{align*}
    L_{\gamma_{0}}(s, \dot{s}) = L(s, \dot{s}, \gamma_{0}) = L\parentheses{ e, s^{-1}\dot{s}, s^{-1}\gamma_{0} }
    = \ell(\xi, \Gamma).
  \end{align*}
  It is a standard result in the Euler--Poincar\'e theory that all variations of $t \mapsto s(t)$ with fixed endpoints induce and are induced by variations of $t \mapsto \xi(t)$ of the form $\delta\xi = \dot{\eta} + \ad_{\xi} \eta$ with $\eta$ vanishing at the endpoint; see, e.g., \cite[Theorem~3.1 and Lemma~3.2]{HoMaRa1998a}.
  Also, $\delta\Gamma = -\eta\Gamma$ easily follows from the definitions \eqref{eq:s-rep_on_X*} and \eqref{eq:Gamma}.
  Finally, one can show the equivalence between the nonholonomic constraints on $\delta s$ and $\eta$ as follows:
  For every $a \in \{1, \dots, r \}$,
  \begin{align*}
    \delta s \in \mathcal{D}_{\gamma_{0}}(s)
    \iff
    0 &= \ip{ \Psi^{a}(s, \gamma_{0}) }{\delta s} \\
      &= \ip{ \Psi^{a}(s, \gamma_{0}) }{ s s^{-1}\delta s} \\
      &= \ip{ (L_{s}^{*}\Psi^{a})(e, \gamma_{0}) }{ \eta } \\
      &= \ip{ \Psi^{a}(e, s^{-1}\gamma_{0}) }{ \eta } \quad (\because \eqref{eq:A-symmetry})\\
      &= \ip{ \psi^{a}(\Gamma) }{ \eta } \quad (\because \text{\eqref{eq:a-def} and definition of $\Gamma$}).
  \end{align*}

  It remains to show the equivalence between \eqref{part:reduced_L-dA} and \eqref{part:NHEP}.
  As is done in the proof of \cite[Theorem~3.1]{HoMaRa1998a},
  \begin{align*}
    \delta \int_{t_{0}}^{t_{1}} \ell( \xi(t), \Gamma(t) )\,dt
    &= \int_{t_{0}}^{t_{1}} \parentheses{ \ip{ \fd{\ell}{\xi} }{ \delta\xi } + \ip{ \delta\Gamma }{ \fd{\ell}{\Gamma} } } dt \\
    &= \int_{t_{0}}^{t_{1}} \parentheses{ \ip{ \fd{\ell}{\xi} }{ \dot{\eta} + \ad_{\xi} \eta } - \ip{ \eta\Gamma }{ \fd{\ell}{\Gamma} } } dt \\
    &= -\int_{t_{0}}^{t_{1}}\ip{ \od{}{t}\parentheses{ \fd{\ell}{\xi} } - \ad_{\xi}^{*}\fd{\ell}{\xi} - \mathbf{K}\parentheses{\fd{\ell}{\Gamma}, \Gamma} }{ \eta } dt.
  \end{align*}
  Therefore, the reduced Lagrange--d'Alembert principle~\eqref{eq:reduced_LdA} with the nonholonomic constraints~\eqref{eq:eta-constraint} yields \eqref{eq:NHEP1}, whereas taking the time derivative of \eqref{eq:Gamma} yields \eqref{eq:NHEP2}.
  Conversely, it is clear that \eqref{eq:NHEP1} implies \eqref{eq:reduced_LdA} with the constraints imposed in \eqref{part:reduced_L-dA}, and integrating \eqref{eq:NHEP2} with the initial condition on $\Gamma$ given in \eqref{part:NHEP} yields \eqref{eq:Gamma}.
\end{proof}

\begin{remark}
  \label{rem:Lagrange_multipliers}
  The Lagrange multipliers $\{\lambda_{a}\}_{a=1}^{r}$ are determined by imposing the constraint $\ip{ \psi^{a}(\Gamma) }{ \xi } = 0$ for $a \in \{1, \dots, r\}$.
\end{remark}

\begin{example}[General Theorem of {\citet[Theorem~1]{Sc2002}}]
  Let $\mathsf{G}$ be a Lie group, $V$ be a vector space, and construct the semidirect product Lie group $\mathsf{S} \defeq \mathsf{G} \ltimes V$ under the multiplication
  \begin{align*}
    s_{1} \cdot s_{2}
    = (g_{1}, x_{1}) \cdot (g_{2}, x_{2})
    = (g_{1} g_{2}, g_{1} x_{2} + x_{1}),
  \end{align*}
  where $\mathsf{G} \times V \to V; (g,x) \mapsto g x$ is a representation.
  In what follows, we shall use the other induced $\mathsf{G}$- and $\mathfrak{g}$-representations on $V$ and $V^{*}$ defined in the same way we did for $X$ and $X^{*}$ above, as well as the associated momentum map $\mathbf{J}\colon V \times V^{*} \to \mathfrak{g}^{*}$ defined in the same way as $\mathbf{K}$ was in \eqref{eq:K}.
  Indeed, it is also assumed in \cite{Sc2002} that $X = V$ and the $\mathsf{S}$-representation on $X^{*}$ is defined as $(g,x) \gamma \defeq g \gamma$ using the $\mathsf{G}$-representation on $V^{*}$ induced by the above $\mathsf{G}$-representation on $V$.
  Hence, writing $\xi = (\Omega, Y) \in \mathfrak{s} = \mathfrak{g} \ltimes V$, it follows that $\xi \gamma = (\Omega, Y) \gamma = \Omega \gamma$ as well.
  It is then straightforward to see that, for every $(\Omega_{i}, Y_{i}) \in \mathfrak{s} = \mathfrak{g} \ltimes V$ with $i = 1, 2$,
  \begin{equation*}
    \ad_{(\Omega_{1},Y_{1})}(\Omega_{2},Y_{2})
    \defeq [(\Omega_{1},Y_{1}), (\Omega_{2},Y_{2})]
    = \parentheses{
      \ad_{\Omega_{1}}\!\Omega_{2},\,
      \Omega_{1} Y_{2} - \Omega_{2} Y_{1}
    },
  \end{equation*}

  What correspond to $\xi, \eta \in \mathfrak{s} = \mathfrak{g} \ltimes V$ in this example are
  \begin{align*}
    (\Omega, Y) &\defeq (g,x)^{-1} (\dot{g}, \dot{x}) = \parentheses{ g^{-1}\dot{g}, g^{-1}\dot{x} }, \\
    (\Sigma, Z) &\defeq (g,x)^{-1} (\delta{g}, \delta{x}) = \parentheses{ g^{-1}\delta{g}, g^{-1}\delta{x} },
  \end{align*}
  respectively.
  Then the constraint on $\xi$ in \eqref{eq:zeta_Gamma-constraint} becomes
  \begin{equation}
    \label{eq:deltazeta-Schneider}
    (\delta\Omega, \delta Y) = (\dot{\Omega}, \dot{Y}) + \parentheses{
      \ad_{\Omega}\!\Sigma,\,
      \Omega Z - \Sigma Y
    },
  \end{equation}

  In \cite{Sc2002}, the nonholonomic constraints are assumed to be in the form
  \begin{equation*}
    \ip{ \Psi((g,x), \gamma) }{ (\dot{g},\dot{x}) } = 
    \dot{x} - \bar{\Psi}((g, x), \gamma) \dot{g} = 0
    \quad\text{with}\quad
    \bar{\Psi}((g, x), \gamma) \colon T_{g}\mathsf{G} \to V
  \end{equation*}
  with the following (left) $\mathsf{S}$-invariance:
  \begin{equation*}
    \Psi((g_{0},x_{0})\cdot(g,x), (g_{0},x_{0})\gamma) = \Psi((g,x), \gamma)
    \quad
    \forall (g_{0},x_{0}), (g,x) \in \mathsf{S}
    \quad
    \forall \gamma \in X^{*}.
  \end{equation*}
  Hence we have
  \begin{equation*}
    \ip{ \psi(\Gamma) }{ (\Omega,Y) } \defeq Y - \bar{\psi}(\Gamma)\Omega
    \quad
    \text{with}
    \quad
    \bar{\psi}(\Gamma) \defeq \bar{\Psi}(e,\Gamma),
  \end{equation*}
  and thus the nonholonomic constraints \eqref{eq:zeta-constraint} and \eqref{eq:eta-constraint} applied to $(\Omega, Y)$ and $(\Sigma, Z)$ yield $Y = \bar{\psi}(\Gamma)\Omega$ and $Z = \bar{\psi}(\Gamma)\Sigma$.
  Substituting these expressions into the second equation in \eqref{eq:deltazeta-Schneider} yields
  \begin{equation*}
    \delta Y = \od{}{t}\parentheses{ \bar{\psi}(\Gamma)\Omega } + \Omega\,\bar{\psi}(\Gamma)\,\Sigma - \Sigma\,\bar{\psi}(\Gamma)\,\Omega,
  \end{equation*}
  which was in (2) of the General Theorem of \citet[Theorem~1]{Sc2002}.
\end{example}

\begin{example}[The Veselova system~\cite{VeVe1986}]
  Let $\mathsf{S} = \mathsf{G}$ (not a semidirect product).
  We still assume that the same broken \textit{left} symmetry and the recovery of \textit{left} symmetry for the Lagrangian $L_{\gamma_{0}}$ described in \Cref{ssec:Lagrangian}, but with $X = \mathfrak{g}$ so that $\gamma_{0} \in \mathfrak{g}^{*}$ and the adjoint representation $g \gamma \defeq \Ad_{g^{-1}}^{*} \gamma$ on $X^{*} = \mathfrak{g}^{*}$.
  We also assume that the constraint distribution $\mathcal{D} \subset T\mathsf{G}$ is corank 1, and is \textit{right} $\mathsf{G}$-invariant:
  \begin{equation*}
    T_{g}\mathsf{R}_{g_{0}}(\mathcal{D}(g)) = \mathcal{D}(g_{0} g)
    \quad
    \forall g, g_{0} \in \mathsf{G},
  \end{equation*}
  where $\mathsf{R}\colon \mathsf{G} \to \mathsf{G}$ is the right translation.
  So we may rewrite the constraint $\dot{g} \in \mathcal{D}(g)$ in terms of $\omega \defeq T_{g}\mathsf{R}_{g^{-1}}(\dot{g}) \eqdef \dot{g} g^{-1}$, i.e., the spatial angular velocity in the rigid body setting:
  \begin{equation*}
    \omega \in \mathfrak{d} \defeq \mathcal{D}(e).
  \end{equation*}
  We then further assume that $\mathfrak{d} = \ker \gamma_{0}$ with the same parameter $\gamma_{0} \in \mathfrak{g}^{*}$ for the Lagrangian, so that
  \begin{equation*}
    \omega \in \mathfrak{d} = \ker \gamma_{0}
    \iff
    \ip{ \gamma_{0} }{ \omega } = 0.
  \end{equation*}
  This is an example of the so-called nonholonomic LR systems~\cite{VeVe1986,FeJo2004,FeJo2009}.

  In short, the right invariant constraint is breaking the left invariance of the system.
  Indeed, noting that
  \begin{equation*}
    \omega = \dot{g} g^{-1} = g g^{-1} \dot{g} g^{-1} = \Ad_{g}\Omega
    \quad\text{with}\quad
    \Omega \defeq g^{-1} \dot{g},
  \end{equation*}
  we may rewrite the above constraint as follows:
  Setting $\Gamma = g^{-1} \gamma_{0} = \Ad_{g}^{*}\gamma_{0}$,
  \begin{equation*}
    \ip{ \gamma_{0} }{ \Ad_{g}\Omega } = \ip{ \Ad_{g}^{*} \gamma_{0} }{ \Omega } = \ip{ g^{-1} \gamma_{0} }{ \Omega } = 0
    \iff
    \ip{ \psi(\Gamma) }{ \Omega } = 0
    \quad
    \text{with}
    \quad
    \psi(\Gamma) \defeq \Gamma,
  \end{equation*}
  
  Particularly, with $\mathsf{G} = \SO(3)$ as in \cite{VeVe1986}, we have $X^{*} = \so(3)^{*}$, and so under the standard identification $\so(3) \cong \so(3)^{*} \cong \R^{3}$, we have
  \begin{equation*}
    \ad_{\bOmega}^{*}\bPi = \bPi \times \bOmega,
    \qquad
    \mathbf{K}(\mathbf{y}, \bGamma) = \mathbf{y} \times \bGamma,
    \qquad
    \bOmega\bGamma = \bOmega \times \bGamma,
  \end{equation*}
  and hence \eqref{eq:NHEP} gives
  \begin{equation*}
    \od{}{t}\parentheses{ \pd{\ell}{\bOmega} } = \pd{\ell}{\bOmega} \times \bOmega + \pd{\ell}{\bGamma} \times \bGamma + \lambda \bGamma, \\
    \qquad
    \dot{\bGamma} = \bGamma \times \bOmega
  \end{equation*}
  with constraint $\bGamma \cdot \bOmega = 0$.
  This gives the Veselova system~\cite{VeVe1986} with the standard kinetic minus potential form of the Lagrangian.
\end{example}

\begin{example}[Pendulum skate~\cite{GzPu2020}]
  \label{ex:PendulumSkate}
  As discussed in \Cref{ssec:PendulumSkate}, $\mathsf{S} = \SE(3) = \SO(3) \ltimes \R^{3}$ here.
  The system is subject to two more constraints in addition to the no-side-sliding constraint from \Cref{ex:no-side-sliding}~(see also \cite{GzPu2020}).
  The following constraints are actually holonomic as the derivations to follow suggest.
  However, we shall impose them as constraints on $\mathfrak{s}$ for the Euler--Poincar\'e formalism.
  \begin{itemize}
  \item \textit{Pitch constancy}:
    The blade does not rock back and forth.
    Specifically, the direction $R\bE1$ of the blade in the spatial frame is perpendicular to $\be3$ (see \Cref{fig:NoSideSliding}):
    \begin{equation}
      \label{eq:pitch_constancy}
      0 = (R\bE1)^{T}\be3
      = (R\bE1)^{T}R\bGamma
      = \bE1^{T}\bGamma = \Gamma_{1}.
    \end{equation}
    Taking the time derivative and using \eqref{eq:NHEP2} (which is $\dot{\bGamma} = \bGamma\times\bOmega$ here)
    \begin{equation*}
      0 = \bE1^{T} \dot{\bGamma}
        = \bE1^{T} (\bGamma\times\bOmega)
        = (\bE1\times\bGamma)^{T} \bOmega,
    \end{equation*}
    giving
    \begin{equation*}
      \psi^{1}(\bGamma) \defeq (\bE1\times\bGamma, \bzero) \in \se(3)^{*} \cong \R^{3} \times \R^{3}.
    \end{equation*}
  \item \textit{Continuous contact}:
    The skate blade is in permanent contact with the plane of the ice, i.e., $x_{3} = \be3^{T} \bx = 0$. Taking the time derivative,
    \begin{equation*}
      0 = \be3^{T} \dot{\bx}
      = (R^{T}\be3)^{T} R^{T}\dot{\bx}
      = \bGamma^{T} \bY,
    \end{equation*}
    giving
    \begin{equation*}
      \psi^{2}(\bGamma) \defeq (\bzero, \bGamma) \in \se(3)^{*} \cong \R^{3} \times \R^{3}.
    \end{equation*}
  \end{itemize}
  Combining the above two constraints with the no-side-sliding constraint from \Cref{ex:no-side-sliding}, we have
  \begin{equation*}
    \mathfrak{d}^{\circ}(\bGamma) = \Span\{ \psi^{a}(\bGamma) \}_{a=1}^{3} \subset \se(3)^{*}.
  \end{equation*}

  We also have, for every $(\bOmega,\bY) \in \se(3) \cong \R^{3} \times \R^{3}$ and every $(\bPi, \mathbf{P}) \in \se(3)^{*} \cong \R^{3} \times \R^{3}$,
  \begin{equation*}
    \ad_{(\bOmega, \bY)}^{*} (\bPi, \mathbf{P})
    = \parentheses{
      \bPi \times \bOmega + \mathbf{P} \times \bY,\,
      \mathbf{P} \times \bOmega
    },
  \end{equation*}
  and for every $(\mathbf{y}, \bGamma) \in X \times X^{*} \cong \R^{3} \times \R^{3}$,
  \begin{equation}
    \label{eq:K-PendulumSkate}
    \mathbf{K}(\mathbf{y},\bGamma)
    = (\mathbf{y} \times \bGamma, \bzero).
  \end{equation}
  Hence the nonholonomic Euler--Poincar\'e equations with advected parameter~\eqref{eq:NHEP} give
  \begin{equation}
    \label{eq:NHEP-PendulumSkate}
    \begin{split}
      \od{}{t}\parentheses{ \pd{\ell}{\bOmega} }
      &= \pd{\ell}{\bOmega} \times \bOmega
      + \pd{\ell}{\bY} \times \bY
      + \pd{\ell}{\bGamma} \times \bGamma
      + \lambda_{1} (\bE1\times\bGamma), \\
      \od{}{t}\parentheses{ \pd{\ell}{\bY} }
      &= \pd{\ell}{\bY} \times \bOmega + \lambda_{2} \bGamma + \lambda_{3} (\bE1 \times \bGamma), \\
      \dot{\bGamma} &= \bGamma \times \bOmega,
    \end{split}
  \end{equation}
  which is Eq.~(8) of \cite{GzPu2020}, and also gives Eq.~(9) of \cite{GzPu2020} with the reduced Lagrangian~\eqref{eq:ell-PendulumSkate}.
  As noted in \Cref{rem:Lagrange_multipliers}, the Lagrange multipliers $\{ \lambda_{a} \}_{a=1}^{3}$ are determined by imposing the constraints $\ip{ \psi^{a}(\Gamma) }{ \xi } = 0$ for $a \in \{1, 2, 3\}$; see \Cref{ex:no-side-sliding} for $\psi^{3}$.
\end{example}

\section{Eliminating Lagrange Multipliers}
\label{sec:quasivelocities}
One may algebraically find concrete expressions for the the Lagrange multipliers in the nonholonomic Euler--Poincar\'e equation~\eqref{eq:NHEP}.
However, they tend to be quite complicated, even with a rather simple Veselova system as shown in \citet[Eq.~(4.3)]{FeJo2004}.
Such a complication is detrimental when applying the method of Controlled Lagrangians, because it is difficult to ``match'' two equations if their structures are unclear.
In this section, we introduce $\Gamma$-dependent quasivelocities and systematically eliminate the Lagrange multipliers in \eqref{eq:NHEP}.

\subsection{$\Gamma$-dependent Hamel Basis}
We decompose the Lie algebra $\mathfrak{s}$ into the $\Gamma$-dependent constraints subspace $\mathfrak{d}(\Gamma)$ and its complement:
Setting $n \defeq \dim\mathfrak{s}$, 
\begin{equation*}
  \mathfrak{s} = \mathfrak{d}(\Gamma) \oplus \mathfrak{v}(\Gamma)
  \quad\text{with}\quad
  \mathfrak{d}(\Gamma) = \Span\{ \mathcal{E}_{\alpha}(\Gamma) \}_{\alpha=1}^{n-r},
  \quad
  \mathfrak{v}(\Gamma) = \Span\{ \mathcal{E}_{a}(\Gamma) \}_{a=1}^{r}.
\end{equation*}
Note that we are using Greek indices for $\mathfrak{d}$, whereas $a, b, c, \dots$ for $\mathfrak{v}$ and $i, j, k, \dots$ for the entire $\mathfrak{s}$.

So we may use Einstein's summation convention to write $\xi \in \mathfrak{s}$ as
\begin{equation*}
  \xi = v^{i}(\Gamma) \mathcal{E}_{i}(\Gamma)  = v^{\alpha}(\Gamma) \mathcal{E}_{\alpha}(\Gamma) + v^{a}(\Gamma) \mathcal{E}_{a}(\Gamma),
\end{equation*}
and refer to $\{ v^{i}(\Gamma) \}_{i=1}^{n}$ as the (\textit{$\Gamma$-dependent}) \textit{quasivelocities}.
For brevity, we shall often drop the $\Gamma$-dependence in what follows.
The advantage of this constraint-adapted Hamel basis is the following:
\begin{equation*}
  \xi \in \mathfrak{d}
  \iff
  v^{a} = 0,
\end{equation*}
where it is implied on the right-hand side that the equality holds for every $a \in \{1, \dots, r\}$.
Hence we may simply drop some of the coordinates to take the constraints into account.

Given a standard ($\Gamma$-independent) basis $\{ E_{i} \}_{i=1}^{n}$ for $\mathfrak{s}$, we may write $\{ \mathcal{E}_{i}(\Gamma) \}_{i=1}^{n}$ as  
\begin{equation}
  \label{eq:bases-s}
  \mathcal{E}_{j}(\Gamma) = \mathcal{E}^{i}_{j}(\Gamma) E_{i}
  \iff
  E_{i} = (\mathcal{E}^{-1})_{i}^{j}(\Gamma)\, \mathcal{E}_{j}(\Gamma),
\end{equation}
where we abuse the notation as follows: $\mathcal{E}$ is the $n \times n$ matrix whose columns are $\{ \mathcal{E}_{j} \}_{j=1}^{n}$ so that $\mathcal{E}^{i}_{j}$ denotes the $(i,j)$-entry of $\mathcal{E}$ as well as the $i$-th component of $\mathcal{E}_{j}$ with respect to the standard basis $\{ E_{i} \}_{i=1}^{n}$.

Suppose that we have the structure constants $\{ c_{ij}^{k} \}_{1\le i,j,k\le n}$ for $\mathfrak{s}$ with respect to the standard basis $\{ E_{i} \}_{i=1}^{n}$, i.e.,
\begin{equation*}
  [E_{i}, E_{j}] = c_{ij}^{k} E_{k}.
\end{equation*}
Then the structure constants with respect to $\{ \mathcal{E}_{j} \}_{j=1}^{n}$ are also $\Gamma$-dependent: 
\begin{equation}
  \label{eq:mathcalC}
  [\mathcal{E}_{i}(\Gamma), \mathcal{E}_{j}(\Gamma)] = \mathcal{C}_{ij}^{k}(\Gamma)\,\mathcal{E}_{k}(\Gamma)
  \quad\text{with}\quad
  \mathcal{C}_{ij}^{k}(\Gamma)
  \defeq \mathcal{E}_{i}^{l}(\Gamma)\, \mathcal{E}_{j}^{m}(\Gamma)\, c_{lm}^{p}\, (\mathcal{E}^{-1})^{k}_{p}(\Gamma).
\end{equation}


\subsection{Eliminating the Lagrange Multipliers}
Let us eliminate the Lagrange multipliers and find a concrete coordinate expression for \eqref{eq:NHEP1} in terms of the $\Gamma$-dependent quasivelocities.

Imposing the constraint $\xi \in \mathfrak{d}$ and using the $\Gamma$-dependent basis $\{ \mathcal{E}_{i} \}_{i=1}^{n}$ for $\mathfrak{s}$, we write
\begin{equation}
  \label{eq:mu_and_p}
  \mu \defeq \left. \fd{\ell}{\xi} \right|_{\rm c},
  \qquad
  p_{i} \defeq \ip{ \mu }{ \mathcal{E}_{i} },
\end{equation}
where the subscript $\left.(\,\cdot\,)\right|_{\rm c}$ indicates that the constraint $\xi \in \mathfrak{d}(\Gamma)$ is applied \textit{after} computing what is inside $(\,\cdot\,)$.
As a result, $\{ p_{i} \}_{i=1}^{n}$ are written in terms of $\{ v^{\alpha} \}_{\alpha=1}^{n-r}$.

Let $\{ \mathbf{e}_{i} \}_{i=1}^{\dim X}$ be a basis for $X$ and $\{ \mathbf{e}_{*}^{i} \}_{i=1}^{\dim X}$ be its dual basis for $X^{*}$, and write
\begin{equation*}
  \Gamma = \Gamma_{i}\, \mathbf{e}_{*}^{i},
  \qquad
  \xi \Gamma = \kappa^{k}_{ij}\, \xi^{j}\, \Gamma_{k}\, \mathbf{e}_{*}^{i}
\end{equation*}
using constants $\kappa^{k}_{ij}$ that are determined by the $\mathfrak{s}$-representation~\eqref{eq:s-rep_on_X*} on $X^{*}$.
Then we have:
\begin{theorem}
  \label{thm:NHEP-quasivelocities}
  The nonholonomic Euler--Poincar\'e equations~\eqref{eq:NHEP} are written in coordinates as
  \begin{subequations}
    \label{eq:NHEP-quasivel}
    \begin{align}
      \label{eq:NHEP1-quasivel}
      \dot{p}_{\alpha}
      &= -\mathcal{B}_{\alpha\beta}^{i}(\Gamma)\, p_{i}\, v^{\beta}
        + \mathcal{K}_{\alpha j}^{k} \left.\pd{\ell}{\Gamma_{j}}\right|_{\rm c} \Gamma_{k},
      \\
      \label{eq:NHEP2-quasivel}
      \dot{\Gamma}_{i} &= -\varkappa^{k}_{i\beta}(\Gamma)\, v^{\beta}\, \Gamma_{k},
    \end{align}
  \end{subequations}
  where
  \begin{gather}
    \mathcal{K}_{ij}^{k}
    \defeq \ip{ \mathbf{K}\parentheses{ \mathbf{e}_{j}, \mathbf{e}_{*}^{k} } }{ \mathcal{E}_{i} },
    \qquad
    \varkappa^{k}_{i\beta}(\Gamma)
    \defeq \kappa^{k}_{ij}\, \mathcal{E}_{\beta}^{j}(\Gamma),
    \\
    \label{eq:mathcalF}
    \mathcal{B}_{\alpha\beta}^{i}(\Gamma) \defeq \mathcal{C}_{\alpha\beta}^{i}(\Gamma) + \mathcal{F}_{\alpha\beta}^{i}(\Gamma)
    \quad\text{with}\quad
    \mathcal{F}^{i}_{\alpha\beta}(\Gamma)
    \defeq D^{j}\mathcal{E}_{\alpha}^{l}(\Gamma)\, (\mathcal{E}^{-1})_{l}^{i}(\Gamma)\, \varkappa^{k}_{j\beta}(\Gamma)\, \Gamma_{k},
  \end{gather}
  and $D^{j}\mathcal{E}_{\alpha}^{i}$ stands for $\tpd{\mathcal{E}_{\alpha}^{i}}{\Gamma_{j}}$.
\end{theorem}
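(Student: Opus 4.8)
The plan is to derive the two coordinate equations separately, obtaining the advected-parameter equation \eqref{eq:NHEP2-quasivel} first and then feeding it into the derivation of the momentum equation \eqref{eq:NHEP1-quasivel}. The central device for eliminating the Lagrange multipliers is to pair the abstract equation \eqref{eq:NHEP1} with the constraint-subspace frame vectors $\mathcal{E}_\alpha(\Gamma)\in\mathfrak{d}(\Gamma)$: since each $\psi^{a}(\Gamma)\in\mathfrak{d}^{\circ}(\Gamma)$ annihilates $\mathfrak{d}(\Gamma)$, we have $\ip{\psi^{a}(\Gamma)}{\mathcal{E}_\alpha}=0$, so the entire $\lambda_{a}\psi^{a}(\Gamma)$ term drops out of the $\alpha$-components and only $n-r$ multiplier-free equations survive. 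I would also record at the outset that along any solution the constraint $\xi\in\mathfrak{d}(\Gamma)$ holds identically, so that $\fd{\ell}{\xi}$ evaluated along the motion coincides with $\mu=\left.\fd{\ell}{\xi}\right|_{\rm c}$; this legitimizes replacing $\od{}{t}\fd{\ell}{\xi}$ by $\dot\mu$ and working throughout with the constrained quantities $p_{i}=\ip{\mu}{\mathcal{E}_{i}}$.

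For \eqref{eq:NHEP2-quasivel} I would simply expand $\dot\Gamma=-\xi\Gamma$ from \eqref{eq:NHEP2} in components. Writing $\xi=v^{\beta}\mathcal{E}_\beta$ on the constraint and $\mathcal{E}_\beta=\mathcal{E}^{j}_\beta E_{j}$ from \eqref{eq:bases-s}, the fixed-basis components of $\xi$ are $v^{\beta}\mathcal{E}^{j}_\beta$; substituting into $\xi\Gamma=\kappa^{k}_{ij}\xi^{j}\Gamma_{k}\mathbf{e}_{*}^{i}$ and reading off the $\mathbf{e}_{*}^{i}$-component yields $\dot\Gamma_{i}=-\kappa^{k}_{ij}\mathcal{E}^{j}_\beta v^{\beta}\Gamma_{k}=-\varkappa^{k}_{i\beta}v^{\beta}\Gamma_{k}$, which is exactly \eqref{eq:NHEP2-quasivel} by the definition $\varkappa^{k}_{i\beta}=\kappa^{k}_{ij}\mathcal{E}^{j}_\beta$.

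For \eqref{eq:NHEP1-quasivel} I would differentiate $p_\alpha=\ip{\mu}{\mathcal{E}_\alpha}$ and use the product rule $\dot p_\alpha=\ip{\dot\mu}{\mathcal{E}_\alpha}+\ip{\mu}{\dot{\mathcal{E}}_\alpha}$, treating the two pieces in turn. For $\ip{\dot\mu}{\mathcal{E}_\alpha}$ I pair \eqref{eq:NHEP1} with $\mathcal{E}_\alpha$: the multiplier term vanishes as noted; the coadjoint term gives $\ip{\ad^{*}_{\xi}\mu}{\mathcal{E}_\alpha}=\ip{\mu}{[\xi,\mathcal{E}_\alpha]}=-\mathcal{C}^{i}_{\alpha\beta}v^{\beta}p_{i}$ using $\xi=v^{\beta}\mathcal{E}_\beta$, the structure constants \eqref{eq:mathcalC}, and antisymmetry of the bracket; and the momentum-map term gives $\mathcal{K}^{k}_{\alpha j}\left.\pd{\ell}{\Gamma_{j}}\right|_{\rm c}\Gamma_{k}$ after expanding $\fd{\ell}{\Gamma}=\pd{\ell}{\Gamma_{j}}\mathbf{e}_{j}$ and $\Gamma=\Gamma_{k}\mathbf{e}_{*}^{k}$ and invoking bilinearity of $\mathbf{K}$ with the definition of $\mathcal{K}^{k}_{ij}$. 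For the remaining $\ip{\mu}{\dot{\mathcal{E}}_\alpha}$ I would compute $\dot{\mathcal{E}}_\alpha=D^{j}\mathcal{E}^{i}_\alpha\,\dot\Gamma_{j}\,E_{i}$ by the chain rule, substitute the freshly derived $\dot\Gamma_{j}=-\varkappa^{k}_{j\beta}v^{\beta}\Gamma_{k}$, re-expand $E_{i}=(\mathcal{E}^{-1})^{l}_{i}\mathcal{E}_{l}$ via \eqref{eq:bases-s}, and pair with $\mu$; this produces precisely $-\mathcal{F}^{i}_{\alpha\beta}v^{\beta}p_{i}$ with $\mathcal{F}$ as in \eqref{eq:mathcalF}. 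Adding the two contributions and combining $\mathcal{C}^{i}_{\alpha\beta}+\mathcal{F}^{i}_{\alpha\beta}=\mathcal{B}^{i}_{\alpha\beta}$ gives \eqref{eq:NHEP1-quasivel}.

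The main obstacle I anticipate is bookkeeping rather than conceptual: keeping the index contractions consistent between the fixed basis $\{E_{i}\}$ and the moving Hamel basis $\{\mathcal{E}_{i}(\Gamma)\}$, especially in the frame term $\ip{\mu}{\dot{\mathcal{E}}_\alpha}$, where the time derivative of the moving frame couples to the advection equation and must be pushed back into the moving frame through $\mathcal{E}^{-1}$. The one genuinely subtle point to state carefully is that differentiating $p_\alpha=\ip{\mu}{\mathcal{E}_\alpha}$ is done along a solution, on which the dropped components $v^{a}$ vanish identically; it is precisely the motion of the frame, $\dot{\mathcal{E}}_\alpha$, that supplies the $\mathcal{F}$ correction distinguishing $\mathcal{B}$ from the bare structure constants $\mathcal{C}$, and there is no separate contribution from the constrained directions.
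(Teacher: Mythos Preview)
Your proposal is correct and follows essentially the same approach as the paper's own proof: pair \eqref{eq:NHEP1} with $\mathcal{E}_{\alpha}$ to kill the Lagrange multipliers, use the product rule on $p_{\alpha}=\ip{\mu}{\mathcal{E}_{\alpha}}$, and identify the frame-derivative contribution $\ip{\mu}{\dot{\mathcal{E}}_{\alpha}}$ (via the chain rule and the already-derived advection equation) as the $\mathcal{F}$ correction to the bare structure constants $\mathcal{C}$. The only cosmetic difference is that the paper moves $\ip{\mu}{\dot{\mathcal{E}}_{\alpha}}$ to the left-hand side before computing it, whereas you keep it on the right; the content and all intermediate computations are identical.
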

\begin{remark}
  \label{rem:p-v}
  As explained above, $\{ p_{i} \}_{i=1}^{n}$ are written in terms of $\{ v^{\alpha} \}_{\alpha=1}^{n-r}$, and so \eqref{eq:NHEP-quasivel} gives the closed set of equations for $\{ v^{\alpha} \}_{\alpha=1}^{n-r}$ and $\Gamma$.
\end{remark}
\begin{proof}
  It is straightforward to see that \eqref{eq:NHEP2-quasivel} follows from \eqref{eq:NHEP2}:
  Since $\xi^{j} = v^{\beta}\, \mathcal{E}_{\beta}^{j}$ for every $\xi \in \mathfrak{d}$, using the definitions of $\kappa$ and $\varkappa$ from above,
  \begin{equation*}
    \dot{\Gamma}_{i}
    = -(\xi \Gamma)_{i}
    = -\kappa^{k}_{ij}\, \xi^{j}\, \Gamma_{k}
    = -\varkappa^{k}_{i\beta}(\Gamma)\, v^{\beta}\, \Gamma_{k}.
  \end{equation*}

  It remains to derive \eqref{eq:NHEP1-quasivel} from \eqref{eq:NHEP1}.
  Imposing the constraint $\xi \in \mathfrak{d}$ to \eqref{eq:NHEP1} and using $\mu$ defined in \eqref{eq:mu_and_p}, we have
  \begin{equation*}
    \dot{\mu} = \ad_{\xi}^{*} \mu
    + \mathbf{K}\parentheses{ \left. \fd{\ell}{\Gamma} \right|_{\rm c}, \Gamma}
    + \lambda_{a} \psi^{a}(\Gamma).
  \end{equation*}
  Then, taking the paring of both sides with $\mathcal{E}_{\alpha}$, we have
  \begin{equation*}
    \ip{ \dot{\mu} }{ \mathcal{E}_{\alpha} }
    = \ip{ \ad_{\xi}^{*} \mu }{ \mathcal{E}_{\alpha} }
    + \ip{ \mathbf{K}\parentheses{ \left. \fd{\ell}{\Gamma} \right|_{\rm c}, \Gamma} }{ \mathcal{E}_{\alpha} }
    + \underbrace{ \ip{ \lambda_{a} \psi^{a} }{ \mathcal{E}_{\alpha} } }_{0}.
  \end{equation*}
  The left-hand side becomes
  \begin{align*}
    \ip{ \dot{\mu} }{ \mathcal{E}_{\alpha} }
    &=  \od{}{t} \ip{ \mu }{ \mathcal{E}_{\alpha} }
      - \ip{ \mu }{ \dot{\mathcal{E}}_{\alpha} } \\
    &= \dot{p}_{\alpha} - \ip{ \mu }{ \dot{\mathcal{E}}_{\alpha} } \\
    &= \dot{p}_{\alpha} - \ip{ \mu }{ D\mathcal{E}_{\alpha}(\Gamma) \dot{\Gamma} } \\
    &= \dot{p}_{\alpha} + \ip{ \mu }{ D\mathcal{E}_{\alpha}(\Gamma) \xi \Gamma },
  \end{align*}
  but then
  \begin{align*}
    D\mathcal{E}_{\alpha}(\Gamma) \xi \Gamma
    &= \pd{\mathcal{E}_{\alpha}^{l}}{\Gamma_{j}} (\xi \Gamma)_{j} E_{l} \\
    &= D^{j}\mathcal{E}_{\alpha}^{l}(\Gamma)\, \varkappa^{k}_{j\beta}(\Gamma)\, v^{\beta}\, \Gamma_{k}\, (\mathcal{E}^{-1})_{l}^{i}(\Gamma)\, \mathcal{E}_{i}(\Gamma) \\
    &= \mathcal{F}^{i}_{\alpha\beta}(\Gamma)\, v^{\beta}\, \mathcal{E}_{i}(\Gamma).
  \end{align*}
  Therefore, using the definition of $p_{i}$ from \eqref{eq:mu_and_p}, we obtain
  \begin{equation*}
    \ip{ \dot{\mu} }{ \mathcal{E}_{\alpha} }
    = \dot{p}_{\alpha} + \mathcal{F}^{i}_{\alpha\beta}(\Gamma)\, p_{i}\, v^{\beta}.
  \end{equation*}
  On the other hand, a straightforward computation yields
  \begin{equation*}
    \ip{ \ad_{\xi}^{*} \mu }{ \mathcal{E}_{\alpha} }
    = \mathcal{C}_{\beta\alpha}^{i}\, p_{i}\, v^{\beta}
    = -\mathcal{C}_{\alpha\beta}^{i}\, p_{i}\, v^{\beta}.
  \end{equation*}
  Moreover, using the bases $\{ \mathbf{e}_{i} \}_{i=1}^{\dim X}$ for $X$ and $\{ \mathbf{e}_{*}^{i} \}_{i=1}^{\dim X}$ for $X^{*}$ introduced earlier, we have
  \begin{equation*}
    \mathbf{K}\parentheses{ \left.\fd{\ell}{\Gamma}\right|_{\rm c}, \Gamma}
    = \mathbf{K}\parentheses{
      \left.\pd{\ell}{\Gamma_{j}}\right|_{\rm c} \mathbf{e}_{j},
      \Gamma_{k}\, \mathbf{e}_{*}^{k}
      } \\
    = \mathbf{K}\parentheses{ \mathbf{e}_{j}, \mathbf{e}_{*}^{k} } \left.\pd{\ell}{\Gamma_{j}}\right|_{\rm c} \Gamma_{k},
  \end{equation*}
  and so
  \begin{equation*}
    \ip{ \mathbf{K}\parentheses{ \left.\fd{\ell}{\Gamma}\right|_{\rm c}, \Gamma} }{ \mathcal{E}_{\alpha} }
    = \ip{ \mathbf{K}\parentheses{ \mathbf{e}_{j}, \mathbf{e}_{*}^{k} } }{ \mathcal{E}_{\alpha} } \left.\pd{\ell}{\Gamma_{j}}\right|_{\rm c} \Gamma_{k}
    = \mathcal{K}_{\alpha j}^{k} \left.\pd{\ell}{\Gamma_{j}}\right|_{\rm c} \Gamma_{k}. \qedhere
  \end{equation*}
\end{proof}

\subsection{Lagrangian and Energy}
In what follows, we assume that the reduced Lagrangian $\ell$ from \eqref{eq:ell} takes the following ``kinetic minus potential'' form:
\begin{equation}
  \label{eq:ell-simple}
  \ell(\xi, \Gamma) = \frac{1}{2} \mathbb{G}_{ij}\,\xi^{i} \xi^{j} - U(\Gamma)
\end{equation}
with a constant $n \times n$ matrix $\mathbb{G}$ and $U\colon X^{*} \to \R$.
Since $\xi^{i} = \mathcal{E}_{j}^{i}(\Gamma)\, v^{j}(\Gamma)$, we have
\begin{equation}
  \label{eq:mathcalG_ij}
  \frac{1}{2} \mathbb{G}_{ij}\,\xi^{i} \xi^{j} = \frac{1}{2} \mathcal{G}_{ij}(\Gamma)\, v^{i}(\Gamma)\, v^{j}(\Gamma)
  \quad\text{with}\quad
  \mathcal{G}_{ij}(\Gamma) \defeq \mathbb{G}_{kl}\, \mathcal{E}_{i}^{k}(\Gamma)\, \mathcal{E}_{j}^{l}(\Gamma).
\end{equation}
Then we see that
\begin{align*}
  \pd{\ell}{\xi^{i}} = \mathbb{G}_{ij}\, \xi^{j} = \mathbb{G}_{ij}\, \mathcal{E}_{k}^{j}\, v^{k}
  &\implies
    \mu_{i} = \left.\pd{\ell}{\xi^{i}}\right|_{\rm c} = \mathbb{G}_{ij}\, \mathcal{E}_{\beta}^{j}\, v^{\beta} \\
  &\implies
    p_{i}
    = \mu_{k}\, \mathcal{E}_{i}^{k}
    = \mathbb{G}_{kj}\, \mathcal{E}_{i}^{k}\, \mathcal{E}_{\beta}^{j}\, v^{\beta}
    = \mathcal{G}_{i\beta}\, v^{\beta},
\end{align*}
giving the concrete relationship between $\{ p_{i} \}_{i=1}^{n}$ and $\{ v^{\alpha} \}_{\alpha=1}^{n-r}$ alluded in \Cref{rem:p-v}.
Then the constrained energy function
\begin{equation}
  \label{eq:mathscrE}
  \mathscr{E} \colon \R^{n-r} \times X^{*} \to \R;
  \qquad
  \mathscr{E}(v^{\alpha}, \Gamma)
  \defeq \frac{1}{2} \mathcal{G}_{\alpha\beta}(\Gamma)\, v^{\alpha} v^{\beta} + U(\Gamma)
\end{equation}
is an invariant of \eqref{eq:NHEP-quasivel} because this is the energy of the system expressed in the quasivelocities.

\section{Pendulum Skate}
\label{sec:PendulumSkate}
Let us now come back to our motivating example and apply \Cref{thm:NHEP-quasivelocities} to the pendulum skate.

\subsection{Equations of Motion}
The $\Gamma$-dependent Hamel basis for the pendulum skate is an extension of the hybrid frame introduced in \cite{GzPu2020}:
\begin{equation*}
  \se(3) \cong \R^{3} \times \R^{3} = \mathfrak{d}(\Gamma) \oplus \mathfrak{v}(\Gamma),
\end{equation*}
where
\begin{equation}
  \label{eq:mathcalE-PendulumSkate}
  \begin{split}
    \mathfrak{d} &= \Span\{
    \mathcal{E}_{1}  \defeq (\bE1, \bzero),\,
    \mathcal{E}_{2}  \defeq (\bGamma, \bzero),\,
    \mathcal{E}_{3}  \defeq (\bzero, \bE1)
    \}, \\
    \mathfrak{v} & \defeq \Span\{
    \mathcal{E}_{4}  \defeq (\bE1 \times \bGamma, \bzero),\,
    \mathcal{E}_{5}  \defeq (\bzero, \bGamma),\,
    \mathcal{E}_{6}  \defeq (\bzero, \bE1 \times \bGamma)
    \},
  \end{split}
\end{equation}
and we equip $\se(3) \cong \R^{6}$ with an inner product by using the dot product on $\R^{6}$.
Then, due to the pitch constancy condition~\eqref{eq:pitch_constancy}, these define orthonormal bases for $\mathfrak{d}$ and $\mathfrak{v}$ with $\mathfrak{v}$ being the orthogonal complement of $\mathfrak{d}$, and also they together form an orthonormal basis for $\se(3)$ as well.

Since the commutator in $\se(3) \cong \R^{3} \times \R^{3}$ given by
\begin{equation*}
  \ad_{(\bOmega_{1}, \bY_{1})} (\bOmega_{2}, \bY_{2})
  = [(\bOmega_{1}, \bY_{1}), (\bOmega_{2}, \bY_{2})]
  = \parentheses{
    \bOmega_{1} \times \bOmega_{2},\,
    \bOmega_{1} \times \bY_{2} - \bOmega_{2} \times \bY_{1}
  },
\end{equation*}
whereas \eqref{eq:mathcalE-PendulumSkate} yields
\begin{equation}
  \label{eq:DmathcalE}
  D\mathcal{E}_{2} =
  \begin{bmatrix}
    I \\
    0
  \end{bmatrix},
  \quad
  D\mathcal{E}_{1} = D\mathcal{E}_{3} = 0,
\end{equation}
the $\Gamma$-dependent structure constants $\mathcal{C}^{k}_{\alpha\beta}$ defined in \eqref{eq:mathcalC} are actually independent of $\Gamma$ here:
\begin{equation*}
  \mathcal{C}^{k}_{\alpha\beta}\, p_{k}
  = \begin{bmatrix}
    0 & -p_{4} & 0 \\
    p_{4} & 0 & p_{6} \\
    0 & -p_{6} & 0 
  \end{bmatrix}
  \quad
  \forall (p_{1}, \dots, p_{6}) \in \R^{6}.
\end{equation*}
Note that we do not need the full $6 \times 6$ matrix $\mathcal{C}^{k}_{ij}\, p_{k}$.

We may then write $\xi = (\bOmega, \bY) \in \se(3)$ in terms of quasivelocities $\{v_{i}\}_{i=1}^{6}$:
\begin{equation*}
  \bOmega = v^{1} \bE1 + v^{2} \bGamma + v^{4} (\bE1 \times \bGamma),
  \qquad
  \bY = v^{3} \bE1 + v^{5} \bGamma + v^{6} (\bE1 \times \bGamma),
\end{equation*}
where, by the orthonormality,
\begin{equation}
  \label{eq:quasivel-PendulumSkate}
  \begin{array}{c}
    v^{1} = \bOmega \cdot \bE1 = \Omega_{1},
    \qquad
    v^{2} = \bOmega \cdot \bGamma,
    \qquad
    v^{3} = \bY \cdot \bE1 = Y_{1},
    \medskip\\
    v^{4} = \bOmega \cdot (\bE1 \times \bGamma),
    \qquad
    v^{5} = \bY \cdot \bGamma,
    \qquad
    v^{6} = \bY \cdot (\bE1 \times \bGamma).
  \end{array}
\end{equation}
Then the constraint $\xi = (\bOmega, \bY) \in \mathfrak{d}(\bGamma)$ is equivalent to $v^{a} = 0$ with $a \in \{4, 5, 6 \}$.

On the other hand, the Lagrangian~\eqref{eq:ell-PendulumSkate} becomes
\begin{equation}
  \begin{split}
    \ell(\bOmega, \bY, \bGamma)
    &= \frac{1}{2} \bOmega^{T} \mathbb{I} \bOmega
    + \frac{m}{2}\norm{ \bY + l \bOmega \times \bE3 }^2
    - m \mathrm{g} l \bGamma^{T} \bE3 \\
    &= \frac{1}{2} \bOmega^{T} \bar{\mathbb{I}} \bOmega
    + m l \bOmega \cdot (\bE3 \times \bY)
    + \frac{m}{2} \norm{\bY}^{2}
    - m \mathrm{g} l \Gamma_{3},
  \end{split}
\end{equation}
where
\begin{equation*}
  \bar{\mathbb{I}} \defeq \diag(\bar{I}_{1}, \bar{I}_{2}, I_{3})
  \quad\text{with}\quad
  \bar{I}_{i} \defeq I_{i} + m l^{2}
  \quad\text{for}\quad
  i = 1, 2.
\end{equation*}
Then
\begin{equation*}
  \pd{\ell}{\xi}
  = \parentheses{
    \pd{\ell}{\bOmega},
    \pd{\ell}{\bY}
  },
  \qquad
  \pd{\ell}{\bOmega} = \bar{\mathbb{I}} \bOmega + m l (\bE3 \times \bY),
  \qquad
  \pd{\ell}{\bY} = m l (\bOmega \times \bE3) + m \bY,
\end{equation*}
and thus we have
\begin{equation*}
  \mu = \left. \fd{\ell}{\xi} \right|_{\rm c}
  = \begin{bmatrix}
    \bar{I}_{1} v^{1} \\
    \bar{I}_{2} \Gamma_{2} v^{2} + m l v^{3} \\
    I_{3} \Gamma_{3} v^{2} \\
    m (l \Gamma_{2} v^{2} + v^{3}) \\
    -m l v^{1} \\
    0
  \end{bmatrix},
\end{equation*}
and
\begin{equation*}
  \begin{bmatrix}
    p_{1} \\
    p_{2} \\
    p_{3}
  \end{bmatrix}
  =
  \begin{bmatrix}
    \bar{I}_{1} v^{1} \\
    (\bar{I}_{2} \Gamma_{2}^{2} + I_{3} \Gamma_{3}^{2}) v^{2} + m l \Gamma_{2} v^{3} \\
    m (l \Gamma_{2} v^{2} + v^{3})
  \end{bmatrix},
  \qquad
  \begin{bmatrix}
    p_{4} \\
    p_{5} \\
    p_{6}
  \end{bmatrix}
  =
  \begin{bmatrix}
    (I_{3} - \bar{I}_{2}) \Gamma_{2} \Gamma_{3} v^{2} - m l \Gamma_{3} v^{3} \\
    -m l \Gamma_{2} v^{1} \\
    m l \Gamma_{3} v^{1}
  \end{bmatrix}.
\end{equation*}

Let us find the right-hand side of \eqref{eq:NHEP1-quasivel}.
We find
\begin{equation*}
  \mathcal{C}_{\alpha\beta}^{i}\, p_{i}\, v^{\beta}
  = \begin{bmatrix}
    -p_{4} v^{2} \\
    p_{4} v^{1} + p_{6} v^{3} \\
    -p_{6} v^{2}
  \end{bmatrix},
  \qquad
  \xi\Gamma = \bOmega \times \bGamma
  = v^{1}
  \begin{bmatrix}
    0 \\
    \Gamma_{3} \\
    -\Gamma_{2}
  \end{bmatrix}
\end{equation*}
since $\Gamma_{1} = 0$.
Hence we have, using \eqref{eq:DmathcalE},
\begin{equation*}
  \mathcal{F}^{i}_{\alpha\beta}(\Gamma)\, p_{i}\, v^{\beta}
  = \ip{ \mu }{ D\mathcal{E}_{\alpha}(\Gamma)\, \xi \Gamma } 
  = \ip{ \mu }{ D^{j}\mathcal{E}_{\alpha}(\Gamma)\, (\xi \Gamma)_{j} } 
  = \begin{bmatrix}
    0 \\
    (\Gamma_{3}\, p_{2} - \Gamma_{2}\, p_{3}) v^{1}
    0
  \end{bmatrix}.
\end{equation*}
Therefore, we obtain
\begin{align*}
  \mathcal{B}_{\alpha\beta}^{i}\, p_{i}\, v^{\beta}
  &= \parentheses{
  \mathcal{C}_{\alpha\beta}^{i} + \mathcal{F}^{i}_{\alpha\beta}
  } p_{i}\, v^{\beta} \\
  &= \begin{bmatrix}
    -p_{4} v^{2} \\
    p_{4} v^{1} + p_{6} v^{3} + (\Gamma_{3}\, p_{2} - \Gamma_{2}\, p_{3}) v^{1} \\
    -p_{6} v^{2}
  \end{bmatrix}
  = \begin{bmatrix}
    (\bar{I}_{2} - I_{3}) \Gamma_{2} \Gamma_{3} (v^{2})^{2} + m l \Gamma_{3} v^{2} v^{3} \\
    m l \Gamma_{3} v^{3} v^{1} \\
    -m l \Gamma_{3} v^{1} v^{2}
  \end{bmatrix}.
\end{align*}
Using \eqref{eq:K-PendulumSkate}, we also have
\begin{equation*}
  \mathcal{K}_{\alpha j}^{k} \left.\pd{\ell}{\Gamma_{j}}\right|_{\rm c} \Gamma_{k}
  = m \mathrm{g} l
  \begin{bmatrix}
    \Gamma_{2} \\
    0 \\
    0
  \end{bmatrix}.
\end{equation*}

As a result, the nonholonomic Euler--Poincar\'e equations~\eqref{eq:NHEP-quasivel} become
\begin{subequations}
  \label{eq:NHEP-quasivel-PendulumSkate}
  \begin{align}
    \begin{bmatrix}
      \dot{p}_{1} \\
      \dot{p}_{2} \\
      \dot{p}_{3}
    \end{bmatrix}
    &= \od{}{t}
    \begin{bmatrix}
      \bar{I}_{1} v^{1} \\
      (\bar{I}_{2} \Gamma_{2}^{2} + I_{3} \Gamma_{3}^{2}) v^{2} + m l \Gamma_{2} v^{3} \\
      m (l \Gamma_{2} v^{2} + v^{3})
    \end{bmatrix}
    \nonumber\\
    &= \begin{bmatrix}
      (\bar{I}_{2} - I_{3}) \Gamma_{2} \Gamma_{3} (v^{2})^{2} + m l \Gamma_{3} v^{2} v^{3} + m \mathrm{g} l \Gamma_{2} \\
      m l \Gamma_{3} v^{3} v^{1} \\
      -m l \Gamma_{3} v^{1} v^{2}
    \end{bmatrix}
    \label{eq:NHEP1-PendulumSkate}
  \end{align}
  coupled with
  \begin{equation}
    \label{eq:NHEP2-PendulumSkate}
    \begin{bmatrix}
      \dot{\Gamma}_{2} \\
      \dot{\Gamma}_{3}
    \end{bmatrix}
    = v^{1}
    \begin{bmatrix}
      \Gamma_{3} \\
      -\Gamma_{2}
    \end{bmatrix}.
  \end{equation}
\end{subequations}

\subsection{Invariants}
One can see by inspection that \eqref{eq:NHEP-quasivel-PendulumSkate} implies
\begin{equation*}
  \od{}{t} \parentheses{ p_{2} - l \Gamma_{2} p_{3} } = 0,
\end{equation*}
which shows that
\begin{equation}
  \label{eq:C1-PendulumSkate}
  C_{1} \defeq p_{2} - l \Gamma_{2} p_{3} = (I_{2} \Gamma_{2}^{2} + I_{3} \Gamma_{3}^{2}) v^{2}
\end{equation}
is an invariant of the system---called $J_{1}$ in \cite[Eq.~(20)]{GzPu2020}.

We also notice by inspection that
\begin{equation*}
  \dot{p}_{3} = - \frac{ m l C_{1} }{ I_{2} \Gamma_{2}^{2} + I_{3} \Gamma_{3}^{2} } \dot{\Gamma}_{2}.
\end{equation*}
However, since $\Gamma_{2}^{2} + \Gamma_{3}^{2} = 1$ (due to the pitch constancy $\Gamma_{1} = 0$ from \eqref{eq:pitch_constancy}), we have
\begin{equation*}
  \dot{p}_{3}
  = - \frac{ m l C_{1} }{ (I_{2} - I_{3}) \Gamma_{2}^{2} + I_{3} } \dot{\Gamma}_{2}
  \implies
  \od{p_{3}}{\Gamma_{2}}
  = - \frac{ m l C_{1} }{ (I_{2} - I_{3}) \Gamma_{2}^{2} + I_{3} }.
\end{equation*}
Therefore,
\begin{equation*}
  p_{3} = - m l C_{1} \int \frac{ 1}{ (I_{2} - I_{3}) \Gamma_{2}^{2} + I_{3} }\,d\Gamma_{2},
\end{equation*}
implying that
\begin{align}
  C_{2}
  &\defeq \frac{p_{3}}{m} + l C_{1} \int \frac{1}{ (I_{2} - I_{3}) \Gamma_{2}^{2} + I_{3} }\,d\Gamma_{2} \nonumber\\
  &= l \Gamma_{2} v^{2} + v^{3} + \frac{l}{I_{3}(I_{2} - I_{3})} C_{1} \arctan\parentheses{ \sqrt{ \frac{I_{2} - I_{3}}{I_{3}} }\,\Gamma_{2} }
    \label{eq:C2-PendulumSkate}
\end{align}
is also an invariant of the system---called $J_{2}$ in \cite[Eq.~(21)]{GzPu2020}, where we assumed $I_{2} > I_{3}$ (and shall do so for the rest of the paper) because this is the case with realistic skaters as mentioned in \cite[Proof of Theorem~2]{GzPu2020}.

\subsection{Equilibria}
We shall use the following shorthands in what follows:
\begin{equation*}
  z \defeq (\bOmega, \bY, \bGamma),
  \qquad
  \zeta \defeq \left( v^{1}, v^{2}, v^{3}, \Gamma_{2}, \Gamma_{3} \right).
\end{equation*}
Note that $z$ denotes the original dependent variables in the nonholonomic Euler--Poincar\'e equations~\eqref{eq:NHEP-PendulumSkate} with Lagrange multipliers, whereas $\zeta$ denotes those in \eqref{eq:NHEP-quasivel-PendulumSkate} using quasivelocities.
Now, let us rewrite the system~\eqref{eq:NHEP-quasivel-PendulumSkate} as
\begin{multline}
  \label{eq:vector_field-PendulumSkate}
  \dot{\zeta} = f(\zeta)
  \quad\text{with}\quad
  f(\zeta) \defeq \Biggl(
  \frac{
    \bigl(
    \left(\bar{I}_{2}-I_{3}\right) \Gamma_{3} (v^{2})^{2} + m \mathrm{g} l
    \bigr) \Gamma_{2}
    + m l \Gamma_{3} v^{2} v^{3}
  }{ \bar{I}_{1} }, \\
  \frac{2 (I_{3}-I_{2}) \Gamma_{2} \Gamma_{3} v^{1} v^{2}}{I_{2}\Gamma_{2}^2 + I_{3} \Gamma_{3}^2},\,
  -\frac{2 l I_{3} \Gamma_{3} v^{1} v^{2}}{I_{2}\Gamma_{2}^2 + I_{3} \Gamma_{3}^2},\,
  v^{1} \Gamma_{3},\,
  -v^{1} \Gamma_{2}
 \Biggr).
\end{multline}
Then one finds that the equilibria are characterized as follows:
\begin{equation}
  \label{eq:equilibria-quasivel}
  v^{1} = 0
  \quad\text{and}\quad
  \parentheses{ (\bar{I}_{2} - I_{3}) \Gamma_{3} (v^{2})^{2} + m \mathrm{g} l } \Gamma_{2}
  + m l \Gamma_{3} v^{2} v^{3} = 0.
\end{equation}
Note that, in view of \eqref{eq:quasivel-PendulumSkate}, $v^{1} = 0$ is equivalent to $\Omega_{1} = 0$.

Let us impose the upright position, i.e., $\Gamma_{3} = 1$ or equivalently $\bGamma = \bE3$.
Then the constraints $v^{a} = 0$ for $a = 4, 5, 6$ yield $\Omega_{2} = Y_{2} = Y_{3} = 0$, i.e., $\bOmega = \Omega_{0} \bE3$ and $\bY = Y_{0} \bE1$ for arbitrary $\Omega_{0}, Y_{0} \in \R$.
Furthermore, the second equation in \eqref{eq:equilibria-quasivel} reduces to $v^{2} v^{3} = 0$.
If $v^{2} = 0$ then $\Omega_{3} = 0$ and thus we have the \textit{sliding equilibrium}
\begin{equation}
  \label{eq:zeta_sl}
  z_{\rm sl} \defeq (\bzero, Y_{0}\bE1, \bE3)
  \quad\text{or}\quad
  \zeta_{\rm sl} \defeq (0, 0, Y_{0}, 0, 1),
\end{equation}
whereas $v^{3} = 0$ gives $Y_{1} = 0$ and gives the \textit{spinning equilibrium}
\begin{equation}
  \label{eq:zeta_sp}
  z_{\rm sp} \defeq (\Omega_{0}\bE3, \bzero, \bE3)
  \quad\text{or}\quad
  \zeta_{\rm sp} \defeq (0, \Omega_{0}, 0, 0, 1).
\end{equation}

As mentioned in the Introduction, these are equilibria of the reduced system and hence are relative equilibria of the original equations of motion, and stability here refers to the Lyapunov stability of the equilibrium of the reduced system.

\subsection{Stability of Equilibria}
\label{ssec:stability}
Let us first discuss the stability of the spinning equilibrium:
\begin{proposition}[Stability of spinning equilibrium]
  Suppose that $I_{3} < I_{2}$.
  \label{prop:spinning_eq}
  \leavevmode
  \begin{enumerate}[(i)]
  \item If $I_{3} + m l^{2} < I_{2}$, then the spinning equilibrium~\eqref{eq:zeta_sp} is unstable.
  \item If
    \begin{equation}
      \label{eq:zeta_sp-stability_condition}
      I_{3} +  m l^{2} > I_{2}
      \quad\text{and}\quad
      \abs{\Omega_{0}} > \sqrt{ \frac{m \mathrm{g} l}{ I_{3} + m l^{2} - I_{2} } },
    \end{equation}
    then the spinning equilibrium~\eqref{eq:zeta_sp} is stable.
  \end{enumerate}
\end{proposition}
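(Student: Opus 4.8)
The plan is to handle the two parts by different methods: a spectral (linearization) argument for the instability in~(i), and the Energy--Casimir method for the stability in~(ii). The split is forced by the structure of the problem, because under~\eqref{eq:zeta_sp-stability_condition} the linearization at $\zeta_{\rm sp}$ turns out to be spectrally neutral (all eigenvalues on the imaginary axis), so linearization is decisive only for instability.

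For part~(i), I would linearize the vector field $f$ of~\eqref{eq:vector_field-PendulumSkate} at $\zeta_{\rm sp}$. Using $\Gamma_2^2+\Gamma_3^2=1$ to eliminate $\Gamma_3$ (whence $\delta\Gamma_3=0$ to first order), the effective coordinates are $(v^1,v^2,v^3,\Gamma_2)$. A direct computation shows that the $\dot v^2$ row and column of the Jacobian vanish---reflecting the conservation of $C_1$ in~\eqref{eq:C1-PendulumSkate}---and that the characteristic polynomial factors as $\lambda^2\parentheses{\lambda^2-\sigma}$ with $\sigma=\bigl((I_2-I_3-ml^2)\,\Omega_0^2+m\mathrm{g}l\bigr)/\bar{I}_1$. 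When $I_3+ml^2<I_2$ one has $I_2-I_3-ml^2>0$, hence $\sigma>0$ and $+\sqrt{\sigma}$ is a positive real eigenvalue; the standard spectral instability criterion then yields instability, the two neutral eigenvalues associated with the conserved quantities $C_1,C_2$ notwithstanding.

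For part~(ii), since $\sigma<0$ under~\eqref{eq:zeta_sp-stability_condition}, I would switch to the Energy--Casimir Theorem applied to the conserved energy $\mathscr{E}$ of~\eqref{eq:mathscrE} and the invariants $C_1,C_2$ of~\eqref{eq:C1-PendulumSkate}--\eqref{eq:C2-PendulumSkate}, using the candidate $\Phi\defeq\mathscr{E}+\varphi(C_1,C_2)$. The first step is to make $\zeta_{\rm sp}$ a critical point: a short calculation gives $d\mathscr{E}=\Omega_0\,dC_1$ and $dC_2\neq0$ at $\zeta_{\rm sp}$, so choosing $\partial\varphi/\partial C_1=-\Omega_0$ and $\partial\varphi/\partial C_2=0$ there forces $d\Phi(\zeta_{\rm sp})=0$. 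Because $\varphi$ may be taken nonlinear, its second derivatives contribute arbitrary quadratic forms along $dC_1,dC_2$, so it suffices to prove that $d^2\Phi$ is positive definite on $\ker dC_1\cap\ker dC_2$ and then complete the square in the complementary directions to obtain a genuine Lyapunov function on the full reduced phase space.

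The crux is this restricted second variation. Concretely, I would use $C_1,C_2$ to solve for $v^2=v^2(\Gamma_2;C_1)$ and $v^3=v^3(\Gamma_2;C_1,C_2)$ and substitute into $\mathscr{E}$; since the kinetic metric satisfies $\mathcal{G}_{11}=\bar{I}_1$ with no $v^1$-cross terms, this writes the energy on a level set as $\mathscr{E}=\tfrac12\bar{I}_1(v^1)^2+V_{\rm eff}(\Gamma_2)$ with $v^1=\dot\Gamma_2/\Gamma_3$, so that $d^2\Phi|_{\ker dC_1\cap\ker dC_2}=\bar{I}_1(\delta v^1)^2+V_{\rm eff}''(0)(\delta\Gamma_2)^2$. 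The computation should yield $V_{\rm eff}''(0)=(I_3+ml^2-I_2)\,\Omega_0^2-m\mathrm{g}l=-\sigma\bar{I}_1$, which is positive precisely under~\eqref{eq:zeta_sp-stability_condition}; the Energy--Casimir Theorem then delivers Lyapunov stability. The main obstacle is exactly this evaluation of $V_{\rm eff}''(0)$: the invariant $C_2$ carries the $\arctan$ term of~\eqref{eq:C2-PendulumSkate}, so one must differentiate the implicitly defined $v^3(\Gamma_2)$ with care and verify that the contributions collapse to this clean expression matching the spectral threshold of part~(i).
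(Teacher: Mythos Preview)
Your proposal is correct and follows essentially the same two-pronged strategy as the paper: linearization for part~(i) and the Energy--Casimir method with the invariants $\mathscr{E}$, $C_1$, $C_2$ for part~(ii), with the same multiplier $-\Omega_0$ attached to $C_1$ and zero to $C_2$.

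The implementation differs in two minor but worthwhile respects. First, you eliminate $\Gamma_3$ at the outset via $\Gamma_2^2+\Gamma_3^2=1$ and work in four variables; the paper instead keeps all five coordinates and introduces the extra invariant $C_3=\tfrac12(\Gamma_2^2+\Gamma_3^2)$ to handle the $\Gamma_3$-direction. Your reduction is a little cleaner. Second, for the restricted Hessian the paper computes the full $5\times5$ matrix $D^2(\mathscr{E}-\Omega_0 C_1+c_3C_3)$ and then restricts to $\ker DC_1\cap\ker DC_2\cap\ker DC_3$, whereas you solve $C_1,C_2$ for $v^2,v^3$ and reduce to the amended potential $V_{\rm eff}(\Gamma_2)$; both routes yield the same threshold $(I_3+ml^2-I_2)\Omega_0^2-m\mathrm{g}l>0$. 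Your worry about the $\arctan$ term in $C_2$ is unfounded: since $c_1=I_3\Omega_0$, the contributions from differentiating $v^3(\Gamma_2)$ through the $\arctan$ and through the $l\Gamma_2v^2$ piece combine with the kinetic cross term $ml\Gamma_2v^2v^3$ in $\mathscr{E}$ to cancel at second order, leaving exactly the clean expression you anticipate. The remark about using a nonlinear $\varphi$ to complete the square in the complementary directions is more than strictly needed---the paper's version of the Energy--Casimir theorem only asks for definiteness on the constraint tangent space---but it does no harm.
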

\begin{proof}
  See \Cref{proof:spinning_eq}.
\end{proof}

On the other hand, the sliding equilibrium is always unstable:
\begin{proposition}[Stability of sliding equilibrium]
  \label{prop:sliding_eq}
  The sliding equilibrium~\eqref{eq:zeta_sl} is linearly unstable.
\end{proposition}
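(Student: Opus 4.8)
The plan is to establish linear instability directly from the reduced vector field $f$ in \eqref{eq:vector_field-PendulumSkate} by linearizing it at the sliding equilibrium $\zeta_{\rm sl} = (0,0,Y_{0},0,1)$ from \eqref{eq:zeta_sl} and exhibiting an eigenvalue of the Jacobian $A \defeq Df(\zeta_{\rm sl})$ with positive real part. First I would compute $A$ componentwise. The key simplification is that almost every nonlinear term in $f$ is at least quadratic in the ``small'' coordinates $(v^{1}, v^{2}, \Gamma_{2})$ that vanish at $\zeta_{\rm sl}$: the components $f_{2}$ and $f_{3}$ are both proportional to the product $v^{1} v^{2}$, so their entire gradients vanish at $\zeta_{\rm sl}$, and the only surviving derivatives come from $f_{1}$ (through the terms $m l \Gamma_{3} v^{2} v^{3}$ and $m\mathrm{g}l\,\Gamma_{2}$) and from $f_{4} = v^{1}\Gamma_{3}$.

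Carrying this out, I expect $A$ to be nonzero only in the rows for $\dot{v}^{1}$ and $\dot{\Gamma}_{2}$, with the coupled block acting on $(v^{1}, v^{2}, \Gamma_{2})$ given by
\[
\delta\dot{v}^{1} = \frac{m l Y_{0}}{\bar{I}_{1}}\,\delta v^{2} + \frac{m\mathrm{g}l}{\bar{I}_{1}}\,\delta\Gamma_{2}, \qquad \delta\dot{v}^{2} = 0, \qquad \delta\dot{\Gamma}_{2} = \delta v^{1},
\]
while $\delta v^{3}$ and $\delta\Gamma_{3}$ decouple with vanishing time derivative. The characteristic polynomial then factors as $-\lambda^{3}\bigl(\lambda^{2} - m\mathrm{g}l/\bar{I}_{1}\bigr)$, so the spectrum of $A$ is $\{0,0,0,\pm\sqrt{m\mathrm{g}l/\bar{I}_{1}}\}$. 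Since $m, \mathrm{g}, l > 0$ and $\bar{I}_{1} = I_{1} + m l^{2} > 0$, the eigenvalue $\lambda_{+} = \sqrt{m\mathrm{g}l/\bar{I}_{1}} > 0$ is real and positive; note in particular that it does not depend on $Y_{0}$, so every sliding equilibrium is unstable.

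The step that needs care --- and the one I view as the main obstacle --- is confirming that this unstable eigenvalue corresponds to a genuine perturbation \emph{within} the invariant manifold, rather than an artifact of differentiating $f$ in the ambient $\R^{5}$. Indeed, the dynamics preserve $\Gamma_{2}^{2} + \Gamma_{3}^{2} = 1$, since $\frac{d}{dt}(\Gamma_{2}^{2}+\Gamma_{3}^{2}) = 2v^{1}(\Gamma_{2}\Gamma_{3} - \Gamma_{3}\Gamma_{2}) = 0$ by \eqref{eq:NHEP2-PendulumSkate}, and the tangent space to this circle at $(\Gamma_{2},\Gamma_{3}) = (0,1)$ is exactly $\{\delta\Gamma_{3} = 0\}$. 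The eigenvector associated with $\lambda_{+}$ lies in the $(\delta v^{1}, \delta\Gamma_{2})$-plane with $\delta v^{2} = \delta v^{3} = \delta\Gamma_{3} = 0$, hence is tangent to the constraint manifold. Therefore $\lambda_{+}$ genuinely destabilizes the reduced flow and is not spurious.

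Finally I would invoke Lyapunov's first method: an equilibrium of a $C^{1}$ vector field whose linearization admits an eigenvalue with positive real part is Lyapunov unstable. Applied to the restriction of $f$ to the invariant manifold $\Gamma_{2}^{2}+\Gamma_{3}^{2}=1$, the presence of $\lambda_{+} > 0$ yields the asserted linear instability of $\zeta_{\rm sl}$.
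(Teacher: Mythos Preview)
Your proposal is correct and follows essentially the same route as the paper: compute the Jacobian $Df(\zeta_{\rm sl})$, read off the eigenvalues $\{0,0,0,\pm\sqrt{m\mathrm{g}l/\bar{I}_{1}}\}$, and invoke instability from linearization. Your extra verification that the unstable eigenvector lies tangent to the invariant level set $\Gamma_{2}^{2}+\Gamma_{3}^{2}=1$ is a nice piece of hygiene the paper omits, but it does not change the argument.
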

\begin{proof}
  The Jacobian $Df$ of the vector field $f$ from \eqref{eq:vector_field-PendulumSkate} at the sliding equilibrium~\eqref{eq:zeta_sl} is
  \begin{equation*}
    Df(\zeta_{\rm sl}) =
    \begin{bmatrix}
      0 & \frac{m l Y_{0}}{I_{1} + m l^{2}} & 0 & \frac{m \mathrm{g} l}{I_{1} + m l^{2}} \\
      0 & 0 & 0 & 0 \\
      0 & 0 & 0 & 0 \\
      1 & 0 & 0 & 0 \\
    \end{bmatrix},
  \end{equation*}
  and its eigenvalues are
  \begin{equation*}
    \left\{0, 0, 0, \pm\sqrt{ \frac{m \mathrm{g} l}{I_{1} + m l^{2}} } \right\},  
  \end{equation*}
  The presence of a positive eigenvalue implies the assertion by the Instability from Linearization criterion~(see, e.g., \citet[p.216]{Sastry1999}).
\end{proof}

\section{Controlled Lagrangian and Matching}
\label{sec:ControlledLagrangian}
The goal of this section is to apply the method of Controlled Lagrangians to the nonholonomic Euler--Poincar\'e equations~\eqref{eq:NHEP-quasivel}.
Our formulation using the $\Gamma$-dependent quasivelocities helps us extend the method of Controlled Lagrangians of \citet{BlLeMa2000} to our system~\eqref{eq:NHEP-quasivel}.
Indeed, the arguments to follow in this section almost exactly parallel those of \cite[Section~2]{BlLeMa2000}.

\subsection{Method of Controlled Lagrangians}
The basic idea of the method of Controlled Lagrangians (CL) is to find a feedback control law for a mechanical system that does not destroy the underlying structure of the mechanical system.
The equations of motion for a mechanical system often possess invariants, such as the energy, Noether invariants, and Casimirs.
However, with controls applied to the system, these invariants typically do not remain invariant.
As a remedy for such a drawback, the method of CL seeks those controls that preserve such invariants by ``matching'' the controlled system with the same form of (uncontrolled) equations of motion with a \textit{different} Lagrangian.
As a result, the controlled system retains those invariants possessed by the original uncontrolled system, albeit in slightly different expressions.
In the next two subsections to follow, we shall explain in detail how the matching works for the pendulum skate.

The method of CL is particularly useful for stabilizing equilibria of mechanical systems, as one may employ a combination of the invariants as a Lyapunov function.
In other words, the main advantage of the method of CL is that the energy--Casimir/momentum method (see, e.g., \citet{Ar1966b} and \citet[Section~1.7]{MaRa1999}) is applicable to the controlled system.

\subsection{Controlled Lagrangian}
Our motivating example is the stabilization of the sliding equilibria of the pendulum skate---shown to to be unstable in \Cref{prop:sliding_eq}---using an internal wheel; see \Cref{fig:PendulumSkate-Controlled}.

\begin{figure}[hbtp]
  \centering
  \includegraphics[width=0.25\linewidth]{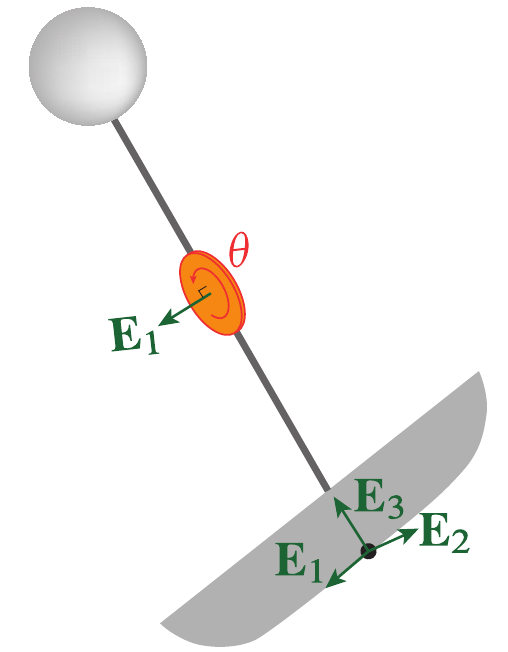}
  \caption{Pendulum skate controlled by an internal rotor attached to the center of mass. Its axis of rotation is aligned with $\bE1$, i.e., the edge of the skate.}
  \label{fig:PendulumSkate-Controlled}
\end{figure}

Following \cite{BlLeMa2000}, let $\mathsf{H}$ be an $s$-dimensional Abelian Lie group and $\mathfrak{h} \cong \R^{s}$ be its Lie algebra; practically $\mathsf{H}$ gives the configuration space of $s$ internal rotors, i.e., $\mathsf{H} = \mathbb{T}^{s}$.
We shall replace the reduced Lagrangian~\eqref{eq:ell-simple} by
\begin{equation}
  \label{eq:ell_r}
  \ell_{\rm r}\colon \mathfrak{s} \times \mathfrak{h} \times X^{*} \to \R;
  \qquad
  \ell_{\rm r}\parentheses{ \xi, \dot{\theta}^{a}, \Gamma }
     \defeq K\parentheses{ \xi, \dot{\theta}^{a} } - U(\Gamma),
\end{equation}
where
\begin{equation*}
  K\parentheses{ \xi, \dot{\theta}^{a} }
     \defeq \frac{1}{2} \mathbb{G}_{ij}\,\xi^{i} \xi^{j}
     + \mathbb{G}_{ia}\, \xi^{i} \dot{\theta}^{a}
     + \frac{1}{2} \mathbb{G}_{ab}\, \dot{\theta}^{a} \dot{\theta}^{b}
\end{equation*}
with a constant symmetric kinetic energy tensor $\mathbb{G}$, i.e., $\mathbb{G}_{ba} = \mathbb{G}_{ab}$; also $\mathbb{G}_{ai}$ and $\mathbb{G}_{ia}$, seen as matrices, are transposes to each other.

Then the equations of motion with control inputs (torques) $\{ u_{a} \}_{a=1}^{s}$ applied to the $\mathsf{H}$-part (internal rotors) are 
\begin{subequations}
  \label{eq:cNHEP}
  \begin{align}
    \label{eq:cNHEP1}
    \dot{p}_{\alpha} &= -\mathcal{B}_{\alpha\beta}^{i}(\Gamma)\, p_{i}\, v^{\beta}
                       - \mathcal{K}_{\alpha j}^{k}\, \pd{U}{\Gamma_{j}} \Gamma_{k},
    \\
    \label{eq:cNHEP2}
    \dot{\pi}_{a} &= u_{a},
    \\
    \dot{\Gamma}_{i} &= -\varkappa^{k}_{i\beta}(\Gamma)\, v^{\beta}\, \Gamma_{k},
  \end{align}
\end{subequations}
where $\{ p_{i} \}_{i=1}^{n}$ and $\{ \pi_{a} \}_{a=1}^{s}$ are (re-)defined as follows using the modified Lagrangian~\eqref{eq:ell_r}:
\begin{equation}
  \label{eq:p_and_pi}
  p_{i} \defeq \ip{ \left. \fd{\ell_{\rm r}}{\xi} \right|_{\rm c} }{ \mathcal{E}_{i} }
  = \mathcal{G}_{i\beta}\, v^{\beta}
  + \mathcal{G}_{ia}\, \dot{\theta}^{a},
  \qquad
  \pi_{a} \defeq \left. \pd{\ell}{\dot{\theta}^{a}} \right|_{\rm c}
  = \mathcal{G}_{a\alpha}\, v^{\alpha} + \mathbb{G}_{ab}\, \dot{\theta}^{b},
\end{equation}
where
\begin{equation}
  \label{eq:mathcalG_ia}
  \mathcal{G}_{ia}(\Gamma) \defeq \mathcal{E}_{i}^{j}(\Gamma)\, \mathbb{G}_{ja},
\end{equation}
and $\mathcal{G}_{ai}$ is the transpose of $\mathcal{G}_{ia}$.
Notice that $\mathcal{G}_{ia}$ is slightly different from $\mathcal{G}_{ij}$ defined in \eqref{eq:mathcalG_ij} and should be distinguished based on the type of indices just like the $\mathbb{G}$'s defined above.

As explained in the previous subsection, we would like to ``match'' the control system~\eqref{eq:cNHEP} with an uncontrolled equations of motion with a different Lagrangian.
Specifically, let
\begin{equation*}
  \tilde{\ell}_{\rm r}\colon \mathfrak{s} \times \mathfrak{h} \times X^{*} \to \R;
  \qquad
  \tilde{\ell}_{\rm r}\parentheses{ \xi, \dot{\theta}^{a}, \Gamma }
  \defeq \tilde{K}\parentheses{ \xi, \dot{\theta}^{a} } - U(\Gamma)
\end{equation*}
be a reduced Lagrangian with a modified kinetic energy $\tilde{K}$ (arbitrary form for now), and consider the corresponding nonholonomic Euler--Poincar\'e equation as in \eqref{eq:cNHEP} but \textit{without control input} $u_{a}$:
\begin{subequations}
  \label{eq:NHEP-tildel}
  \begin{align}
    \label{eq:NHEP1-tildel}
    \dot{\tilde{p}}_{\alpha}
    &= -\mathcal{B}_{\alpha\beta}^{i}(\Gamma)\, v^{\beta} \tilde{p}_{i}
      - \mathcal{K}_{\alpha j}^{k}\, \pd{U}{\Gamma_{j}} \Gamma_{k},
    \\
    \label{eq:NHEP2-tildel}
    \dot{\tilde{\pi}}_{a} &= 0,
    \\
    \dot{\Gamma}_{i} &= -\varkappa^{k}_{i\beta}(\Gamma)\, v^{\beta}\, \Gamma_{k},
  \end{align}
\end{subequations}
where
\begin{equation*}
  \tilde{p}_{i}\defeq \ip{ \left. \fd{\tilde{\ell}_{\rm r}}{\xi} \right|_{\rm c} }{ \mathcal{E}_{i} },
  \qquad
  \tilde{\pi}_{a} \defeq \left. \pd{\tilde{\ell}_{\rm r}}{\dot{\theta}^{a}} \right|_{\rm c}.
\end{equation*}
Our goal is to find a modified kinetic energy $\tilde{K}$ so that \eqref{eq:NHEP-tildel} is equivalent to the controlled system~\eqref{eq:cNHEP}.
To this end, we follow \cite{BlLeMa2000} and consider the following from of the modified kinetic energy:
\begin{equation}
  \label{eq:tilde_ell_r}
  \begin{split}
    \tilde{K}\parentheses{ \xi, \dot{\theta}^{a} }
    &\defeq K\parentheses{ \xi^{i}, \dot{\theta}^{a} + \tau^{a}_{i}\, \xi^{i} }
      + \frac{1}{2} \sigma_{ab}\, \tau^{a}_{i} \tau^{b}_{j}\, \xi^{i} \xi^{j} \\
    &\quad + \frac{1}{2} (\rho_{ab} - \mathbb{G}_{ab})
      \Bigl( \dot{\theta}^{a} + (\mathbb{G}^{ac}\mathbb{G}_{ci} + \tau^{a}_{i}) \xi^{i} \Bigr)
      \Bigl( \dot{\theta}^{b} + (\mathbb{G}^{bc}\mathbb{G}_{cj} + \tau^{b}_{j}) \xi^{j} \Bigr),
  \end{split}
\end{equation}
where $\tau^{a}_{i}$, $\sigma_{ab}$, and $\rho_{ab}$ are all constant matrices and the last two are symmetric, and are to be determined to realize the matching.
Then we have
\begin{equation}
  \label{eq:tilde_p}
  \begin{split}
    \tilde{p}_{i} &= \mathcal{G}_{i\alpha}\,v^{\alpha} + \mathcal{G}_{ia} \dot{\theta}^{a}
    + \parentheses{ \mathcal{G}_{ib} + \sigma_{ab}\, \mathcal{T}^{a}_{i} } \mathcal{T}^{b}_{\alpha}\, v^{\alpha}
    + \mathbb{G}_{ab}\,\rho^{bc}\, \tilde{\pi}_{c}\, \mathcal{T}^{a}_{i} \\
    &\quad + (\rho_{ab} - \mathbb{G}_{ab}) \rho^{ad}\, \tilde{\pi}_{d} \parentheses{ \mathbb{G}^{bc} \mathcal{G}_{ci} + \mathcal{T}^{b}_{i} }
  \end{split}
\end{equation}
and
\begin{equation}
  \label{eq:tilde_pi}
  \tilde{\pi}_{a} = \left. \rho_{ab} \Bigl( \dot{\theta}^{b} + \mathbb{G}^{bc} \mathbb{G}_{ci}\, \xi^{i}+ \tau^{b}_{i}\, \xi^{i} \Bigr) \right|_{\rm c}
  = \rho_{ab} \Bigl( \dot{\theta}^{b} + \mathbb{G}^{bc} \mathcal{G}_{c\alpha}\, v^{\alpha}+ \mathcal{T}^{b}_{\alpha}\, v^{\alpha} \Bigr).
\end{equation}
See \Cref{proof:tilde_p} for the derivation of the expression for $\tilde{p}_{i}$, where we defined
\begin{equation}
  \label{eq:mathcalT}
  \mathcal{T}^{a}_{i} \defeq \tau^{a}_{j}\, \mathcal{E}_{i}^{j}.
\end{equation}

\subsection{Matching and Control Law}
It turns out that the same sufficient condition for matching from \cite[Theorem~2.1]{BlLeMa2000} works here:
\begin{proposition}
  \label{prop:matching}
  The controlled Euler--Poincar\'e equations \eqref{eq:cNHEP} and the Euler--Poincar\'e equations \eqref{eq:NHEP-tildel} with the (reduced) controlled Lagrangian $\tilde{\ell}_{\rm r}$ coincide if
  \begin{equation}
    \label{eq:matching}
    \tau^{a}_{i} = - \sigma^{ab} \mathbb{G}_{bi}
    \quad\text{and}\quad
    \sigma^{ab} + \rho^{ab} = \mathbb{G}^{ab}.
  \end{equation}
  Then the resulting control law is
  \begin{equation}
    \label{eq:u}
    u_{a} = \mathbb{G}_{ab} \parentheses{
          \mathcal{T}^{b}_{i}\, \mathcal{F}^{i}_{\alpha\beta}\, v^{\alpha} v^{\beta}
          - \mathcal{T}^{b}_{\alpha}\, \dot{v}^{\alpha}
          }.
  \end{equation}
\end{proposition}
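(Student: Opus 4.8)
The plan is to follow the proof of \cite[Theorem~2.1]{BlLeMa2000} essentially verbatim, since the $\Gamma$-dependent quasivelocity formulation renders the two systems \eqref{eq:cNHEP} and \eqref{eq:NHEP-tildel} structurally identical to the holonomic case treated there. The first observation is that the third equation $\dot{\Gamma}_{i}=-\varkappa^{k}_{i\beta}(\Gamma)\,v^{\beta}\,\Gamma_{k}$ of \eqref{eq:NHEP2-quasivel} appears identically in both systems, and the potential terms $-\mathcal{K}_{\alpha j}^{k}\pd{U}{\Gamma_{j}}\Gamma_{k}$ in \eqref{eq:cNHEP1} and \eqref{eq:NHEP1-tildel} coincide. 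Hence matching the full systems reduces to two tasks: (a)~making the constrained-momentum equation \eqref{eq:cNHEP1} agree with \eqref{eq:NHEP1-tildel}, and (b)~identifying the control $u_{a}$ that reconciles $\dot{\pi}_{a}=u_{a}$ in \eqref{eq:cNHEP2} with the conservation law $\dot{\tilde{\pi}}_{a}=0$ in \eqref{eq:NHEP2-tildel}.

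For task~(a), since \eqref{eq:cNHEP1} and \eqref{eq:NHEP1-tildel} share the identical form $\dot{q}_{\alpha}=-\mathcal{B}_{\alpha\beta}^{i}q_{i}v^{\beta}-\mathcal{K}_{\alpha j}^{k}\pd{U}{\Gamma_{j}}\Gamma_{k}$, it suffices to prove the momentum identity $\tilde{p}_{i}=p_{i}$ for every $i$. I would substitute the matching conditions \eqref{eq:matching} into the explicit expression \eqref{eq:tilde_p} and compare with \eqref{eq:p_and_pi}. The $v^{\alpha}$-linear, $\tilde{\pi}$-independent part of $\tilde{p}_{i}-p_{i}$ is $(\mathcal{G}_{ib}+\sigma_{ab}\mathcal{T}^{a}_{i})\mathcal{T}^{b}_{\alpha}v^{\alpha}$; writing $\mathcal{T}^{a}_{i}=\tau^{a}_{j}\mathcal{E}_{i}^{j}=-\sigma^{ab}\mathcal{G}_{bi}$ from the first matching condition and \eqref{eq:mathcalT}, and using the symmetry $\mathcal{G}_{ib}=\mathcal{G}_{bi}$ of \eqref{eq:mathcalG_ij}, this factor collapses to $\mathcal{G}_{ib}-\mathcal{G}_{bi}=0$. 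For the $\tilde{\pi}$-dependent part, the first condition turns $\mathbb{G}^{bc}\mathcal{G}_{ci}+\mathcal{T}^{b}_{i}$ into $(\mathbb{G}^{bc}-\sigma^{bc})\mathcal{G}_{ci}$, which the second matching condition identifies with $\rho^{bc}\mathcal{G}_{ci}$; a short contraction using $\rho_{ab}\rho^{ac}=\delta_{b}^{c}$ together with the symmetry of $\mathbb{G}$ and $\rho$ then shows that the whole $\tilde{\pi}$-dependent coefficient cancels. Thus $\tilde{p}_{i}=p_{i}$ holds identically, and \eqref{eq:NHEP1-tildel} becomes exactly \eqref{eq:cNHEP1}.

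For task~(b), I would express the conserved quantity $\tilde{\pi}_{a}$ of the modified system in terms of the controlled momentum $\pi_{a}$. Eliminating $\dot{\theta}^{b}$ between \eqref{eq:p_and_pi} and \eqref{eq:tilde_pi} gives the identity $\tilde{\pi}_{a}=\rho_{ab}(\mathbb{G}^{bc}\pi_{c}+\mathcal{T}^{b}_{\alpha}v^{\alpha})$, which requires only the definitions and not the matching conditions. Imposing $\dot{\tilde{\pi}}_{a}=0$, substituting $\dot{\pi}_{c}=u_{c}$, and contracting with $\rho^{ea}$ and then $\mathbb{G}_{de}$ yields $u_{d}=-\mathbb{G}_{de}\od{}{t}\parentheses{\mathcal{T}^{e}_{\alpha}v^{\alpha}}=-\mathbb{G}_{de}\parentheses{\dot{\mathcal{T}}^{e}_{\alpha}v^{\alpha}+\mathcal{T}^{e}_{\alpha}\dot{v}^{\alpha}}$. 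The final step is to rewrite $\dot{\mathcal{T}}^{e}_{\alpha}v^{\alpha}$: since $\mathcal{T}^{e}_{\alpha}=\tau^{e}_{j}\mathcal{E}_{\alpha}^{j}(\Gamma)$ carries all the $\Gamma$-dependence, I would apply the chain rule, insert the $\Gamma$-equation \eqref{eq:NHEP2-quasivel} for $\dot{\Gamma}$, and match the contraction $\mathcal{E}^{m}_{i}(\mathcal{E}^{-1})^{i}_{l}=\delta^{m}_{l}$ against the definition of $\mathcal{F}^{i}_{\alpha\beta}$ in \eqref{eq:mathcalF}, obtaining $\dot{\mathcal{T}}^{e}_{\alpha}v^{\alpha}=-\mathcal{T}^{e}_{i}\mathcal{F}^{i}_{\alpha\beta}v^{\alpha}v^{\beta}$. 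Substituting this produces precisely \eqref{eq:u}, and since \eqref{eq:u} in turn makes $\tilde{\pi}_{a}$ conserved along the controlled flow, the two systems coincide.

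The hard part will be the index bookkeeping in two places. First, the cancellation of the $\tilde{\pi}$-dependent terms in $\tilde{p}_{i}-p_{i}$ is delicate: it genuinely needs both matching conditions acting in tandem, with the second condition $\sigma^{ab}+\rho^{ab}=\mathbb{G}^{ab}$ supplying the symmetry-driven cancellation, and it is easy to misclassify a term as $v$-linear versus $\tilde{\pi}$-dependent. Second, the identity $\dot{\mathcal{T}}^{e}_{\alpha}v^{\alpha}=-\mathcal{T}^{e}_{i}\mathcal{F}^{i}_{\alpha\beta}v^{\alpha}v^{\beta}$ is the one genuinely new ingredient relative to \cite{BlLeMa2000}, being entirely a consequence of the $\Gamma$-dependence of the Hamel basis; aligning the derivative index $D^{j}\mathcal{E}_{\alpha}^{l}$ with the factor $\varkappa^{k}_{j\beta}\Gamma_{k}$ arising from $\dot{\Gamma}$ is where I expect the computation to require the most care. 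Everything else is a direct transcription of the holonomic argument.
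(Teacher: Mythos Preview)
Your proposal is correct and follows essentially the same route as the paper's proof: both reduce matching \eqref{eq:cNHEP1}--\eqref{eq:NHEP1-tildel} to the identity $\tilde{p}_{i}=p_{i}$, verify it by splitting off the $v^{\alpha}$-linear and $\tilde{\pi}$-dependent pieces under \eqref{eq:matching}, and then derive \eqref{eq:u} from the linear relation between $\pi_{a}$ and $\tilde{\pi}_{a}$ (the paper writes it as $\pi_{a}-\mathbb{G}_{ab}\rho^{bd}\tilde{\pi}_{d}=-\mathbb{G}_{ab}\mathcal{T}^{b}_{\alpha}v^{\alpha}$, which is just a rearrangement of your identity), followed by the same chain-rule computation identifying $\dot{\mathcal{T}}^{b}_{\alpha}v^{\alpha}$ with $-\mathcal{T}^{b}_{i}\mathcal{F}^{i}_{\alpha\beta}v^{\alpha}v^{\beta}$. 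One small correction: the symmetry $\mathcal{G}_{ib}=\mathcal{G}_{bi}$ you invoke is for rotor index $b$ and hence comes from the convention stated after \eqref{eq:mathcalG_ia}, not from \eqref{eq:mathcalG_ij}; the paper explicitly distinguishes these.
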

\begin{proof}
  See \Cref{proof:matching}.
\end{proof}

\begin{remark}
  One can eliminate the acceleration $\dot{v}^{\alpha}$ from the control law~\eqref{eq:u} using \eqref{eq:cNHEP1}.
\end{remark}

\begin{example}[Pendulum skate with a rotor]
  \label{ex:PendulumSkate_with_rotor}
  Going back to the motivating example from \Cref{fig:PendulumSkate-Controlled}, we may write the reduced Lagrangian with a rotor as follows:
  \begin{equation}
    \label{eq:ell_r-PendulumSkate}
    \begin{split}
      \ell_{\rm r}&\colon \se(3) \times T_{1}\mathbb{S}^{1} \times (\R^{3})^{*} \cong \R^{6} \times \R \times \R^{3} \to \R; \\
      \ell_{\rm r}&\parentheses{ \bOmega, \bY, \dot{\theta}, \bGamma } \defeq
      K_{\rm r}\parentheses{ \bOmega, \bY, \dot{\theta} }
      - m \mathrm{g} l \bGamma^{T} \bE3
    \end{split}
  \end{equation}
  with
  \begin{equation}
    \label{eq:K_r}
    \begin{split}
      K_{\rm r}\parentheses{ \bOmega, \bY, \dot{\theta} }
      &\defeq \frac{1}{2}\Bigl(
      (I_{1} + m l^{2})\Omega_{1}^{2}
      + J_{1}\bigl(\Omega_{1} + \dot{\theta} \bigr)^{2}
      + (\mathcal{I}_{2} + m l^{2}) \Omega_{2}^{2}
      + \mathcal{I}_{3} \Omega_{3}^{2}
      \Bigr) \\
      &\qquad + m l (\Omega_{2} Y_{1} - \Omega_{1} Y_{2})
      + m \norm{\bY}^{2}
      \\
      &= \frac{1}{2}\Bigl(
      (\mathcal{I}_{1} + m l^{2}) \Omega_{1}^{2}
      + (\mathcal{I}_{2} + m l^{2}) \Omega_{2}^{2}
      + \mathcal{I}_{3} \Omega_{3}^{2}
      + m l (\Omega_{2} Y_{1} - \Omega_{1} Y_{2})
      + m \norm{\bY}^{2}
      \Bigr) \\
      &\qquad + J_{1} \Omega_{1} \dot{\theta}
      + \frac{1}{2} J_{1} \dot{\theta}^{2},
    \end{split}
  \end{equation}
  where
  \begin{equation*}
    \mathcal{I}_{i} \defeq I_{i} + J_{i}
  \end{equation*}
  with $J_{i}$ ($i=1,2,3$) being the moments of inertia of the rotor; note that $m$ now denotes the total mass of the system \textit{including} the rotor.
  Hence we have
  \begin{equation*}
    \mathbb{G}_{ij} =
    \begin{bmatrix}
      \mathcal{I}_{1} + m l^{2} & 0 & 0 & 0 & -m l & 0 \\
      0 & \mathcal{I}_{2} + m l^{2} & 0 & m l & 0 & 0 \\
      0 & 0 & \mathcal{I}_{3} & 0 & 0 & 0 \\
      0 & m l & 0 & m & 0 & 0 \\
      -m l & 0 & 0 & 0 & m & 0 \\
      0 & 0 & 0 & 0 & 0 & m \\
    \end{bmatrix},
    \qquad
    \mathbb{G}_{ia} =
    \begin{bmatrix}
      J_{1} \\
      0 \\
      0 \\
      0 \\
      0 \\
      0
    \end{bmatrix}
    = J_{1} \be1,
    \qquad
    \mathbb{G}_{ab} = J_{1}.
  \end{equation*}
  Note that $\sigma_{ab} \eqdef \sigma$ and $\rho_{ab} \eqdef \rho$ are all scalars because there is only one internal rotor ($s = 1$).
  Then the matching conditions~\eqref{eq:matching} give
  \begin{align*}
    \tau^{a}_{i} = -\frac{J_{1}}{\sigma}\,\be1^{T}
    \iff
    \mathcal{T}^{a}_{i} = \tau^{a}_{j}\, \mathcal{E}_{i}^{j}
    &= -\frac{J_{1}}{\sigma}
    \begin{bmatrix}
      \be1^{T}\mathcal{E}_{1} & \cdots & \be1^{T}\mathcal{E}_{6}
    \end{bmatrix} \\
    &= -\frac{J_{1}}{\sigma}
    \begin{bmatrix}
      1 & \Gamma_{1} & 0 & 0 & 0 & 0
    \end{bmatrix} \\
    &= -\frac{J_{1}}{\sigma} \be1^{T},
  \end{align*}
  noting that $\Gamma_{1} = 0$, and
  \begin{equation*}
    \frac{1}{\sigma} + \frac{1}{\rho} = \frac{1}{J_{1}} \iff
    \rho = \frac{J_{1}}{1 - J_{1}/\sigma}.
  \end{equation*}
  Then \eqref{eq:u} gives the control law
  \begin{equation}
    \label{eq:u-PendulumSkate}
      u_{a} = \frac{J_{1}^{2}}{\sigma}\,\dot{v}^{1} 
      = \frac{J_{1}}{\sigma}\,\frac{\Gamma_{2} \bigl(m \mathrm{g} l + \left(m l^2 + I_{2}-I_{3}\right) \Gamma_{3} (v^{2})^2 \bigr)
    + m l \Gamma_{3} v^{2} v^{3}}{(I_{1} + m l^{2})/J_{1}},
  \end{equation}
  which is what we obtained in \cite[Eq.~(15d)]{GaOh2022} (with $\sigma = -J_{1}/\nu$) using a simpler and ad-hoc controlled Lagrangian.

  Let us find the controlled Lagrangian.
  First, again using $\Gamma_{1} = 0$,
  \begin{equation*}
    \mathcal{G}_{ai} = \mathbb{G}_{aj} \mathcal{E}_{i}^{j}\,
    = J_{1} \be1^{T} \mathcal{E}_{i}^{T}
    = J_{1}
    \begin{bmatrix}
      1 & \Gamma_{1} & 0 & \dots & 0
    \end{bmatrix}
    = J_{1} \be1^{T}.
  \end{equation*}
  Then \eqref{eq:tilde_pi} gives
   \begin{equation*}
    \tilde{\pi}_{a} = J_{1} v^{1} + \frac{J_{1}}{ 1 - J_{1}/\sigma } \dot{\theta}.
  \end{equation*}
  Since $\tilde{\pi}_{a}$ is conserved according to \eqref{eq:NHEP2-tildel}, we impose that
  \begin{equation*}
    \tilde{\pi}_{a} = 0
    \iff
    \dot{\theta} = \parentheses{ \frac{J_{1}}{\sigma} - 1 } v^{1}
    = \parentheses{ \frac{J_{1}}{\sigma} - 1 } \Omega_{1},
  \end{equation*}
  and substitute it to \eqref{eq:tilde_ell_r} to obtain
  \begin{equation*}
    \tilde{\ell}_{\rm r}\parentheses{ \bOmega, \bY, \parentheses{ \frac{J_{1}}{\sigma} - 1 } \Omega_{1}, \Gamma }
    = \frac{1}{2} \bOmega^{T} \tilde{\mathbb{I}} \bOmega
    + \frac{m}{2}\norm{ \bY + l \bOmega \times \bE3 }^2
    - m \mathrm{g} l \bGamma^{T} \bE3,
  \end{equation*}
  where
  \begin{equation}
    \label{eq:tildeI}
    \tilde{\mathbb{I}} \defeq \diag\parentheses{
      I_{1} + \frac{J_{1}^{2}}{\sigma} ,\, I_{2} + J_{2},\, I_{3} + J_{3}
    }.
  \end{equation}
  Notice that this controlled Lagrangian takes the same form as the Lagrangian~\eqref{eq:ell-PendulumSkate} of the original pendulum skate \textit{without} the rotor, with the only difference being the inertia tensors $\mathbb{I}$ and $\tilde{\mathbb{I}}$.
\end{example}

\section{Stabilization of Pendulum Skate}
\label{sec:stabilization}
\subsection{Stabilization of Sliding Equilibrium}
Recall from \Cref{prop:sliding_eq} that the sliding equilibrium~\eqref{eq:zeta_sl} was always unstable without control.
The control law obtained above using the controlled Lagrangian can stabilize it:
\begin{proposition}
  \label{prop:sliding_eq-controlled}
  The sliding equilibrium~\eqref{eq:zeta_sl} of the pendulum skate with an internal rotor from \Cref{fig:PendulumSkate-Controlled} is stable with the control~\eqref{eq:u-PendulumSkate} if
  \begin{equation}
    \label{eq:zeta_sl-stability_condition}
    -\frac{J_{1}^{2}}{I_{1} + m l^{2}} < \sigma < 0.
  \end{equation}
\end{proposition}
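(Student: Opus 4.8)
The plan is to reduce the claim to a Lyapunov (energy--Casimir) argument for an \emph{uncontrolled} pendulum skate with a modified inertia tensor. By the matching in \Cref{prop:matching} and the explicit computation in \Cref{ex:PendulumSkate_with_rotor}, on the invariant set $\tilde{\pi}_{a}=0$ (recall $\tilde{\pi}_{a}$ is conserved by \eqref{eq:NHEP2-tildel}) the closed-loop dynamics of $(\bOmega,\bY,\bGamma)$ coincides with the uncontrolled system \eqref{eq:NHEP-quasivel-PendulumSkate} in which $\mathbb{I}$ is replaced by $\tilde{\mathbb{I}}$ from \eqref{eq:tildeI}. Consequently the closed loop inherits a conserved energy $\tilde{\mathscr{E}}$ and the two invariants $\tilde{C}_{1},\tilde{C}_{2}$ obtained from \eqref{eq:C1-PendulumSkate}--\eqref{eq:C2-PendulumSkate} by the substitution $I_{2}\mapsto I_{2}+J_{2}$, $I_{3}\mapsto I_{3}+J_{3}$. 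A one-line computation shows that \eqref{eq:zeta_sl-stability_condition} is equivalent to
\[
  \tilde{\bar{I}}_{1} \defeq I_{1} + m l^{2} + \frac{J_{1}^{2}}{\sigma} < 0,
\]
and this single inequality is what the whole argument turns on.

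First I would introduce the energy--Casimir candidate
\[
  \mathscr{L} \defeq \tilde{\mathscr{E}} - m Y_{0}\,\tilde{C}_{2} + \tfrac{a}{2}\,\tilde{C}_{1}^{2} + \tfrac{b}{2}\,(\tilde{C}_{2}-Y_{0})^{2}
\]
with constants $a,b$ to be fixed. Eliminating $\Gamma_{3}=\sqrt{1-\Gamma_{2}^{2}}$ and writing $(\delta_{1},\delta_{2},\delta_{3},\delta_{4})\defeq(v^{1},v^{2},v^{3}-Y_{0},\Gamma_{2})$, one checks that at $\zeta_{\rm sl}$ from \eqref{eq:zeta_sl} the differentials are $d\tilde{\mathscr{E}}=mY_{0}\,dv^{3}$, $d\tilde{C}_{1}=(I_{3}+J_{3})\,dv^{2}$, and $d\tilde{C}_{2}=dv^{3}$. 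Thus the linear coefficient $-mY_{0}$ on $\tilde{C}_{2}$ is precisely what cancels $d\tilde{\mathscr{E}}$, while the quadratic terms contribute nothing to first order; hence $d\mathscr{L}=0$ at $\zeta_{\rm sl}$ for every $a,b$.

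The heart of the proof is then the Hessian $d^{2}\mathscr{L}|_{\zeta_{\rm sl}}$. Expanding to second order, the $\delta_{1}$ and $\delta_{3}$ directions decouple with coefficients $\tilde{\bar{I}}_{1}$ and $m+b$, the potential $m\mathrm{g}l\,\Gamma_{3}$ supplies $-\tfrac{1}{2}m\mathrm{g}l\,\delta_{4}^{2}$ because $\Gamma_{3}=1$ maximizes height, and the $(\delta_{2},\delta_{4})$ block has diagonal $(I_{3}+J_{3})\bigl(1+a(I_{3}+J_{3})\bigr)$ and $-m\mathrm{g}l$ together with an off-diagonal coupling---independent of $a,b$---arising jointly from the $ml\Gamma_{2}v^{2}v^{3}$ term of $\tilde{\mathscr{E}}$ and from the second-order part of the arctangent in $\tilde{C}_{2}$. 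Since $\tilde{\bar{I}}_{1}<0$ and $-m\mathrm{g}l<0$ are already negative, I would take $b<-m$ and $a$ negative enough that $1+a(I_{3}+J_{3})<0$ and the $2\times 2$ determinant dominates the fixed off-diagonal term, rendering $d^{2}\mathscr{L}$ negative definite. As $\mathscr{L}$ is conserved with a strict local maximum at $\zeta_{\rm sl}$, it is a Lyapunov function and the sliding equilibrium is stable; a term $\tfrac{c}{2}\tilde{\pi}_{a}^{2}$ may be appended to control perturbations of the rotor momentum off $\{\tilde{\pi}_{a}=0\}$.

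The step I expect to be the main obstacle is extracting the correct second-order contribution of the arctangent in $\tilde{C}_{2}$---it is genuinely quadratic, of order $\delta_{2}\delta_{4}$, rather than higher---and confirming that the resulting fixed coupling can always be beaten by the $a$-dependent diagonal entry. Everything else is forced by $\tilde{\bar{I}}_{1}<0$. If one prefers to sidestep the augmented $\tilde{C}_{1}^{2},\tilde{C}_{2}^{2}$ terms, the same conclusion follows from a reduced-energy picture: $d\tilde{C}_{1},d\tilde{C}_{2}$ span the $(v^{2},v^{3})$ directions transversally at $\zeta_{\rm sl}$, so on the level set $\{\tilde{C}_{1}=0,\ \tilde{C}_{2}=Y_{0}\}$ one has $v^{2}=0$, $v^{3}=Y_{0}$, and the residual dynamics in $(v^{1},\Gamma_{2})$ is the pendulum $\ddot{\phi}=(m\mathrm{g}l/\tilde{\bar{I}}_{1})\sin\phi$ with $\Gamma_{2}=\sin\phi$; since $\tilde{\bar{I}}_{1}<0$ this is a stable pendulum, whose conserved energy $\tfrac{1}{2}\tilde{\bar{I}}_{1}(v^{1})^{2}+m\mathrm{g}l\cos\phi$ has a strict maximum at the equilibrium.
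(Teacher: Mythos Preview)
Your proposal is correct, and your alternative paragraph at the end is in fact essentially the paper's argument. The paper applies the Energy--Casimir theorem in its ``restricted Hessian'' form: it takes the linear combination $\tilde{\mathscr{E}} - mY_{0}\tilde{C}_{2} - m\mathrm{g}l\,\tilde{C}_{3}$ with $\tilde{C}_{3}=(\Gamma_{2}^{2}+\Gamma_{3}^{2})/2$ (rather than eliminating $\Gamma_{3}$), checks that the gradient vanishes at $\zeta_{\rm sl}$, and then tests the Hessian only on the tangent space $\ker D\tilde{C}_{1}\cap\ker D\tilde{C}_{2}\cap\ker D\tilde{C}_{3}$, which is spanned by the $v^{1}$ and $\Gamma_{2}$ directions. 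On that subspace the quadratic form is simply
\[
  w^{T}\mathscr{H}w=\Bigl(I_{1}+ml^{2}+\tfrac{J_{1}^{2}}{\sigma}\Bigr)s_{1}^{2}-m\mathrm{g}l\,s_{2}^{2},
\]
diagonal and manifestly negative definite under \eqref{eq:zeta_sl-stability_condition}. This sidesteps entirely the off-diagonal $(\delta_{2},\delta_{4})$ coupling you flag as the main obstacle.

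Your primary route is a genuine variant: you augment with $\tfrac{a}{2}\tilde{C}_{1}^{2}+\tfrac{b}{2}(\tilde{C}_{2}-Y_{0})^{2}$ and aim for definiteness on the \emph{full} (four-dimensional) space, choosing $a,b$ to dominate the fixed coupling. That works and yields an honest Lyapunov function on the whole phase space (and your remark about appending $\tfrac{c}{2}\tilde{\pi}_{a}^{2}$ to control rotor perturbations is a nice touch the paper omits). The trade-off is that you must actually compute the arctangent contribution to the $(\delta_{2},\delta_{4})$ block and argue that $a$ can be taken large enough---all avoidable, as the paper shows, by passing to the level set first.
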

\begin{proof}
  See \Cref{proof:sliding_eq-controlled}.
\end{proof}
\begin{remark}
  In terms of $\nu \defeq -J_{1}/\sigma$, the control gain in view of \eqref{eq:u-PendulumSkate}, the above condition~\eqref{eq:zeta_sl-stability_condition} is equivalent to $\nu > (I_{1} + m l^{2})/J_{1}$, which is what we had in \cite[Eq.~(18) in Theorem~4]{GaOh2022}.
\end{remark}

\subsection{Numerical Results---Uncontrolled}
As a numerical example, consider the pendulum skate with $m = 2.00\, [\text{kg}]$, $l=0.80\, [\text{m}]$, $(I_{1}, I_{2}, I_{3}) = (0.35,0.35,0.004)\, [\text{kg}\cdot\text{m}^2]$ with $\mathrm{g} = 9.80\, [\text{m}/\text{s}^{2}]$.
As the initial condition, we consider a small perturbation to the sliding equilibrium \eqref{eq:zeta_sl}:
\begin{equation}
  \label{eq:IC}
  \begin{array}{c}
    \bOmega(0)=(0.1,\, 0.1\tan\phi_{0},\, 0.1),
    \qquad
    \bY(0)=(1,0,0),
    \qquad
    \bGamma(0)=(0, \sin\phi_{0}, \cos\phi_{0}),
  \end{array}
\end{equation}
where $\phi_{0} = 0.1$ is the small angle of tilt of the pendulum skate away from the vertical upward direction.
\Cref{fig:NumericalResults-Uncontrolled} shows the result of the simulation of the uncontrolled pendulum skate~\eqref{eq:NHEP-quasivel-PendulumSkate} with the above initial condition.
It clearly exhibits instability as the pendulum skate falls down.

\begin{figure}[htbp]
  \centering
  \captionsetup{width=0.3\textwidth}
  \subfloat[Body angular velocity $\bOmega$]{
    \includegraphics[width=.29\linewidth]{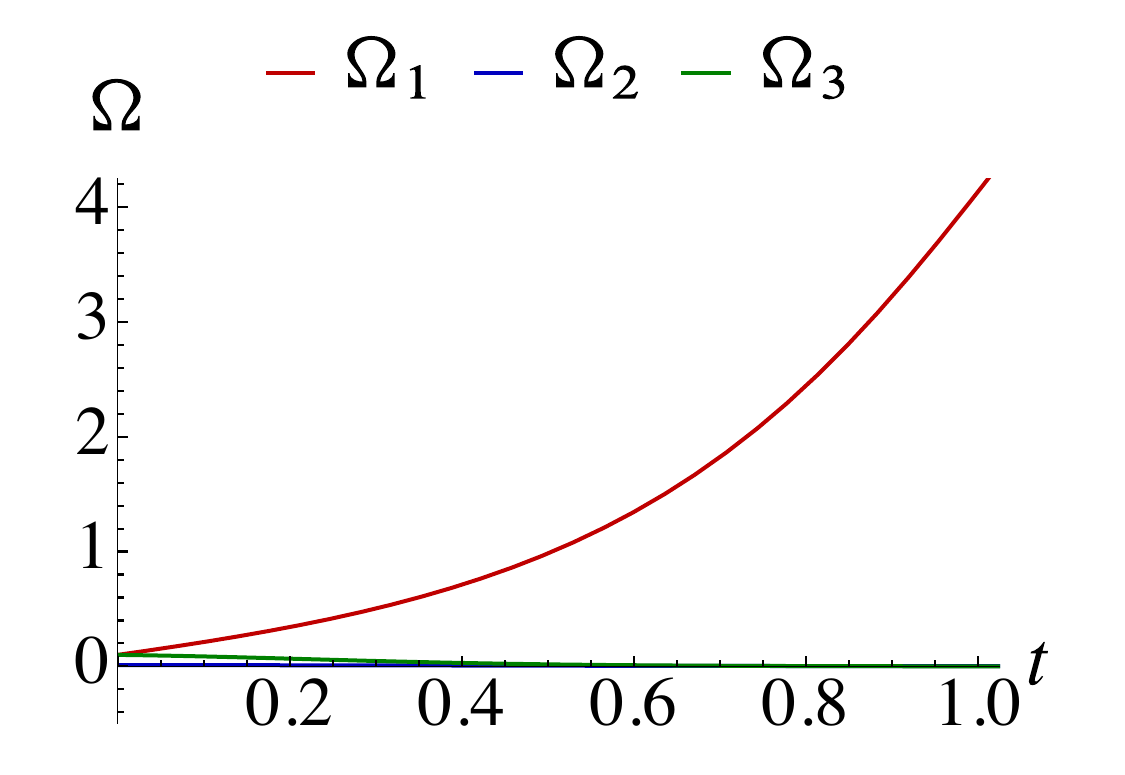}
  }
  \
  \subfloat[Velocity $Y_{1}$ in body frame]{
    \includegraphics[width=.29\linewidth]{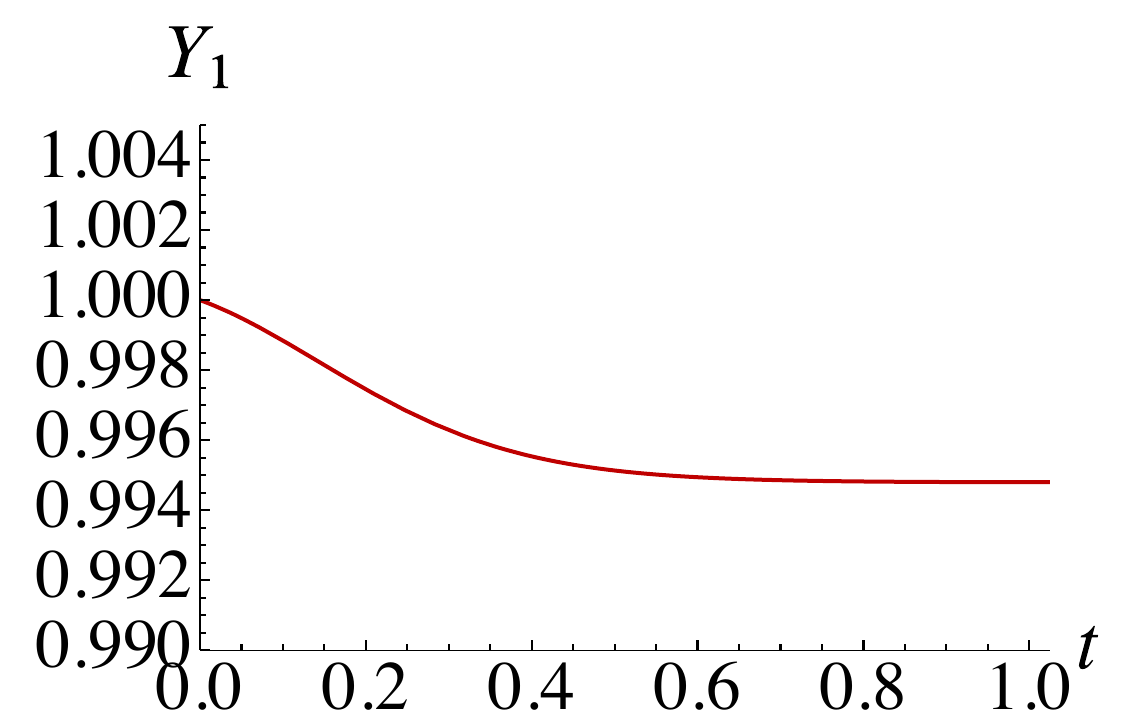}
  }
  \
  \subfloat[Vertical upward direction $\boldsymbol{\Gamma}$ seen from body frame]{
    \includegraphics[width=.29\linewidth]{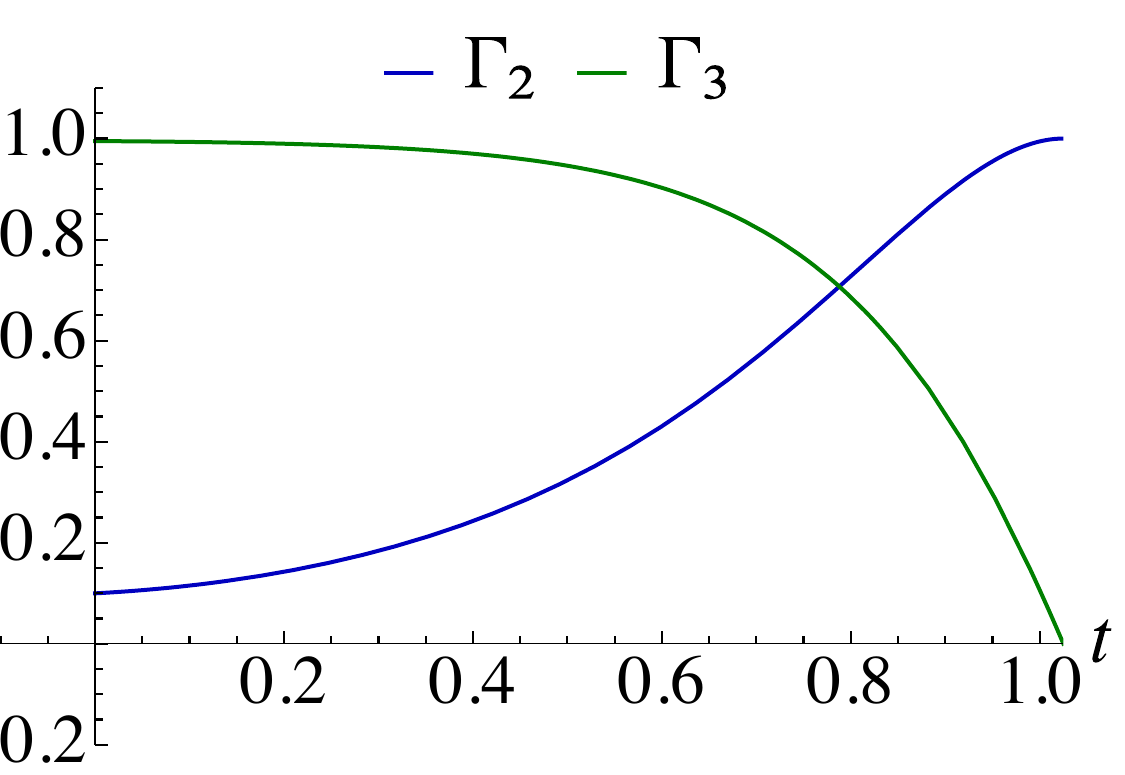}
  }
  \caption{
    Numerical results for the uncontrolled pendulum skate~\eqref{eq:NHEP-quasivel-PendulumSkate} with $m = 2.00\, [\text{kg}]$, $l=0.80\, [\text{m}]$, $(I_{1}, I_{2}, I_{3}) = (0.35,0.35,0.004)\, [\text{kg}\cdot\text{m}^2]$.
    The initial condition~\eqref{eq:IC} is a small perturbation from the sliding equilibrium~\eqref{eq:zeta_sl}.
    The pendulum skate falls down, i.e., $\Gamma_{3} = 0$ at $t \simeq 1.025$ exhibiting the instability of the sliding equilibrium shown in \Cref{prop:sliding_eq}.
  }
  \label{fig:NumericalResults-Uncontrolled}
\end{figure}

\subsection{Numerical Results---Controlled}
We also solved the controlled system~~\eqref{eq:cNHEP} (see also \Cref{ex:PendulumSkate_with_rotor}) with the control~\eqref{eq:u-PendulumSkate} using the same initial condition~\eqref{eq:IC}.
The mass of the rotor is $1\,[\text{kg}]$, making the total mass $m=3\,[\text{kg}]$; we also set $(J_{1}, J_{2}, J_{3}) = (0.005, 0.0025, 0.0025)\,[\text{kg}\cdot\text{m}^2]$.
The lower bound for $\sigma$ shown in \eqref{eq:zeta_sl-stability_condition} in order to achieve stability is $-J_{1}^{2}/(I_{1} + m l^{2}) \simeq -1.10 \times 10^{-5}$; hence we set $\sigma = -10^{-5}$ here.
\Cref{fig:NumericalResults-Controlled} shows that the system is indeed stabilized by the control.

\begin{figure}[htbp]
  \centering
  \captionsetup{width=0.3\textwidth}
  \subfloat[Body angular velocity $\bOmega$]{
    \includegraphics[width=.31\linewidth]{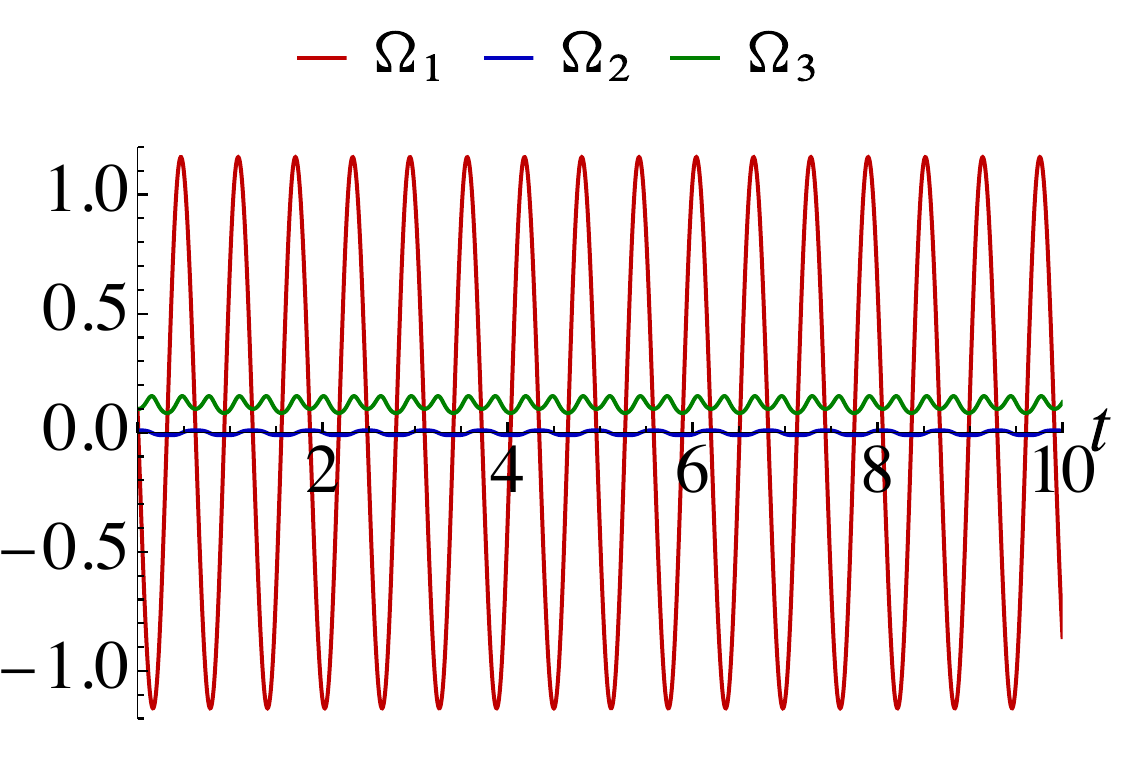}
  }
  \
  \subfloat[Velocity $Y_{1}$ in body frame]{
    \includegraphics[width=.31\linewidth]{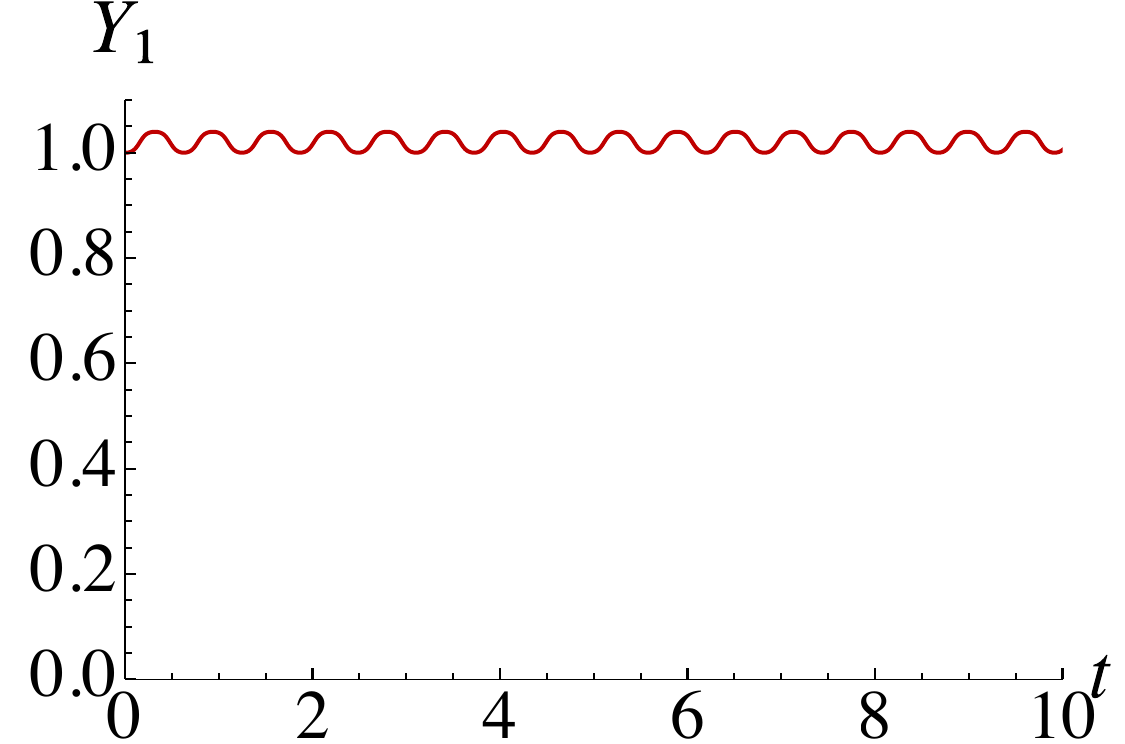}
  }
  \
  \subfloat[Vertical upward direction $\boldsymbol{\Gamma}$ seen from body frame]{
    \includegraphics[width=.31\linewidth]{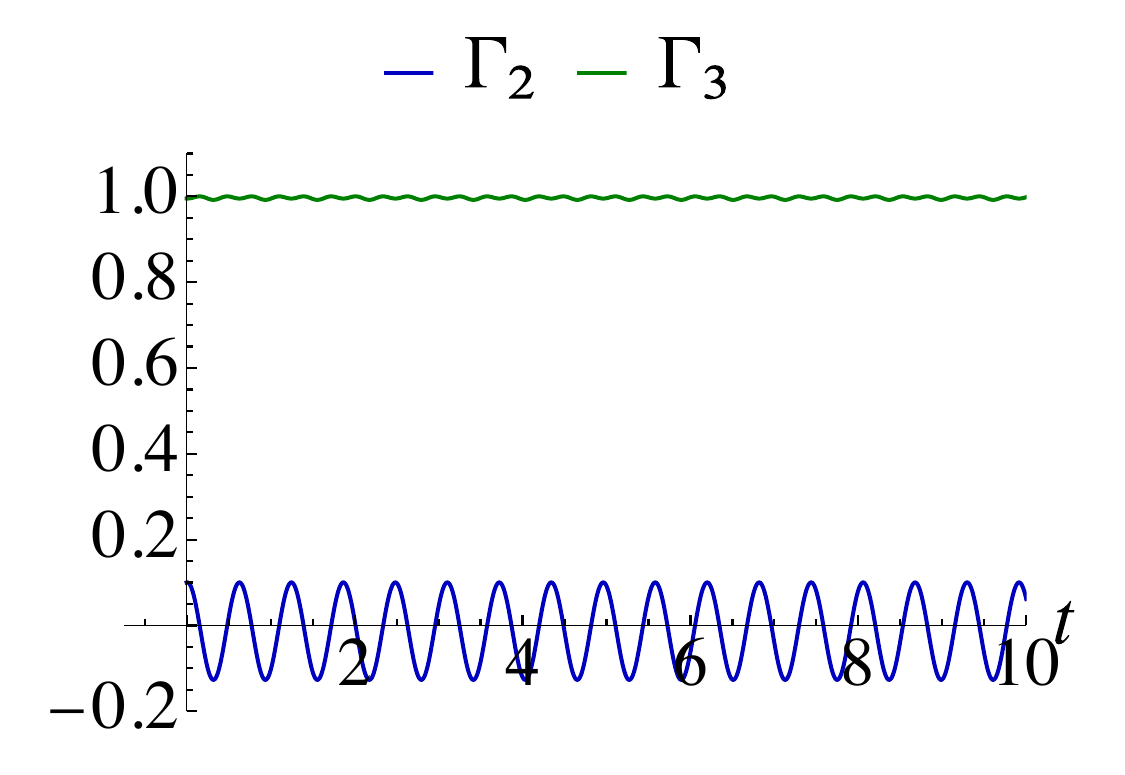}
  }
  \caption{
    Numerical results for the controlled pendulum skate~\eqref{eq:cNHEP} (see also \Cref{ex:PendulumSkate_with_rotor}) with $m = 3.00\, [\text{kg}]$, $(J_{1}, J_{2}, J_{3}) = (0.005, 0.0025, 0.0025)\,[\text{kg}\cdot\text{m}^2]$.
    The solutions are shown for the time interval $0 \le t \le 10$.
    One sees that the system is stabilized by the control in comparison to \Cref{fig:NumericalResults-Uncontrolled}; particularly, $\Gamma_{3}$ stays close to 1, indicating that it maintains its position almost upright.
  }
  \label{fig:NumericalResults-Controlled}
\end{figure}

\section*{Acknowledgments}
This paper is an extended version of our conference paper~\cite{GaOh2022}.
We would like to thank Vakhtang Putkaradze for helpful discussions and also the reviewer for the constructive criticisms and comments.
This work was supported by NSF grant CMMI-1824798.
TO also would like to thank the Graduate School of Informatics of Kyoto University and Kazuyuki Yagasaki for their hospitality in supporting his sabbatical visit at Kyoto University, where some parts of the work were performed.

\appendix

\section{Some Proofs}
\subsection{Proof of \Cref{prop:spinning_eq}}
\label{proof:spinning_eq}
The Jacobian $Df$ of the vector field $f$ from \eqref{eq:vector_field-PendulumSkate} at the spinning equilibrium~\eqref{eq:zeta_sp} is
\begin{equation*}
  Df(\zeta_{\rm sp}) =
  \begin{bmatrix}
    0 & 0 & \frac{m l \Omega _0}{I_{1} + m l^{2}} & \frac{m \mathrm{g} l - \left(I_{3} + m l^{2} - I_{2} \right) \Omega_{0}^{2} }{I_{1} + m l^{2}} & 0\\
    0 & 0 & 0 & 0 & 0 \\
    -2 l \Omega _0 & 0 & 0 & 0 & 0 \\
    1 & 0 & 0 & 0 & 0 \\
    0 & 0 & 0 & 0 & 0
  \end{bmatrix},
\end{equation*}
and its eigenvalues are
\begin{equation*}
  \left\{0, 0, 0, \pm\sqrt{ \frac{m \mathrm{g} l - (I_{3} + m l^{2}- I_{2}) \Omega_{0}^{2}}{I_{1} + m l^{2}} } \right\}.
\end{equation*}
If $I_{3} + m l^{2} < I_{2}$ then by the Instability from Linearization criterion~(see, e.g., \citet[p.216]{Sastry1999}), $\zeta_{\rm sp}$ is unstable.
On the other hand, if $I_{3} + m l^{2} > I_{2}$ then the linear analysis is inconclusive, and so we would like to use the following nonlinear method:

\begin{EC}
  \label{thm:energy-Casimir}
  Consider a system of differential equations $\dot{\zeta} = f(\zeta)$ on ${\R}^n$ with locally Lipschitz $f\colon \R^{n} \to \R^{n}$ with an equilibrium $\zeta_{0} \in \R^{n}$, i.e., $f(\zeta_{0}) = 0$.
  Assume that the system has $k + 1(< n)$ invariants $\mathscr{E}$ and $\{ C_{j} \}_{j=1}^{k}$ that are $C^{2}$ and submersive at $\zeta_{0}$, and also that the gradients $\{ DC_{j}(\zeta_{0}) \}_{j=1}^{k}$ at $\zeta_{0}$ are linearly independent.
  Then $\zeta_{0}$ is a stable equilibrium if
  \begin{enumerate}[(i)]
  \item there exist scalars $\{ c_{j} \in \R \}_{j=1}^{k}$ such that $D(\mathscr{E} + c_{1} C_{1} + \dots + c_{k} C_{k})(\zeta_{0}) = 0$; and
  \item the Hessian $\mathscr{H} \defeq D^{2}(\mathscr{E} + c_{1} C_{1} + \dots + c_{k} C_{k})(\zeta_{0})$ is sign definite on the tangent space at $\zeta_{0}$ of the submanifold $\setdef{ \zeta \in \R^{n} }{ C_{j}(\zeta) = C_{j}(\zeta_{0})\ \forall j \in \{1,\dots, k\} }$, i.e., for any $w \in{\R}^{n} \backslash\{0\}$ satisfying $DC_{j}(\zeta_{0}) \cdot w = 0$ with every $j \in \{1, \dots, k \}$, one has $w^{T} \mathscr{H} w > 0$ (or $< 0$).
  \end{enumerate}
\end{EC}
We note that, despite its name, the above theorem does not assume that the invariants are Casimirs: any invariants---Casimirs or not---would suffice. 

In order to use the above theorem, we set the constrained energy \eqref{eq:mathscrE} as $\mathscr{E}$ and use the invariants $C_{1}$ and $C_{2}$ from \eqref{eq:C1-PendulumSkate} and \eqref{eq:C2-PendulumSkate} as well as $C_{3} = (\Gamma_{2}^{2} + \Gamma_{3}^{2})/2$ with $k = 3$.
Now, since
\begin{equation*}
  D\mathscr{E}(\zeta_{\rm sp})
  = \begin{bmatrix}
    0 \\
    I_{3} \Omega_{0} \\
    0 \\
    0 \\
    I_{3} \Omega_{0}^{2} + m \mathrm{g} l
  \end{bmatrix},
  \quad
  DC_{1}(\zeta_{\rm sp})
  = \begin{bmatrix}
    0 \\
    I_{3} \\
    0 \\
    0 \\
    2 I_{3} \Omega_{0}
  \end{bmatrix},
  \quad
  DC_{2}(\zeta_{\rm sp})
  = \begin{bmatrix}
    0 \\
    0 \\
    1 \\
    2 l \Omega_{0} \\
    0
  \end{bmatrix},
  \quad
  DC_{3}(\zeta_{\rm sp})
  = \begin{bmatrix}
    0 \\
    0 \\
    0 \\
    0 \\
    1
  \end{bmatrix},
\end{equation*}
setting $(c_{1}, c_{2}, c_{3}) = (-\Omega_{0}, 0, I_{3} \Omega_{0}^{2} - m \mathrm{g} l)$, we have
\begin{equation*}
  D( \mathscr{E} + c_{1} C_{1} + c_{2} C_{2} + c_{3} C_{3} )(\zeta_{\rm sp}) = 0.
\end{equation*}
Then we also see that
\begin{align*}
  \mathscr{H}
  &\defeq D^{2}( \mathscr{E} + c_{1} C_{1} + c_{2} C_{2} + c_{3} C_{3} )(\zeta_{\rm sp}) \\
  &= \begin{bmatrix}
    I_1 + m l^{2} & 0 & 0 & 0 & 0\\
    0 & I_{3} & 0 & 0 & 0 \\
    0 & 0 & m & m l \Omega_{0} & 0 \\
    0 & 0 & m l \Omega_{0} & \Omega_{0}^{2} \left(I_{3} + m l^{2} - I_{2} \right) - m \mathrm{g} l & 0 \\
    0 & 0 & 0 & 0 & -m \mathrm{g} l\\
  \end{bmatrix}.
\end{align*}
The relevant tangent space is the null space
\begin{equation*}
  \ker
  \begin{bmatrix}
    DC_{1}(\zeta_{\rm sp})^{T} \\
    DC_{2}(\zeta_{\rm sp})^{T} \\
    DC_{3}(\zeta_{\rm sp})^{T}
  \end{bmatrix}
  = \setdef{
    w = s_{1}
    \begin{bmatrix}
      1 \\
      0 \\
      0 \\
      0 \\
      0
    \end{bmatrix}
    + s_{2}
    \begin{bmatrix}
      0 \\
      0 \\
      0 \\
      1 \\
      0
    \end{bmatrix}
  }{
    s_{1}, s_{2} \in \R
  }.
\end{equation*}
Hence we have the quadratic form
\begin{equation*}
  w^{T} \mathscr{H} w
  = (I_{1} + m l^{2}) s_{1}^{2}
  + \parentheses{
    (I_{3} + m l^{2} - I_{2}) \Omega _0^2
    - m \mathrm{g} l 
  } s_{2}^{2},
\end{equation*}
which is positive definite in $(s_{1}, s_{2})$ under the assumed conditions~\eqref{eq:zeta_sp-stability_condition}.

\subsection{Derivation of \eqref{eq:tilde_p}}
\label{proof:tilde_p}
We can obtain the expression for $\tilde{p}_{i}$ in \eqref{eq:tilde_p} as follows:
Let us first rewrite its definition as
\begin{equation*}
  \tilde{p}_{i} \defeq \ip{ \left. \fd{\tilde{\ell}_{\rm r}}{\xi} \right|_{\rm c} }{ \mathcal{E}_{i} }
  = \mathcal{E}_{i}^{j} (\Gamma)\, \ip{ \left. \fd{\tilde{\ell}_{\rm r}}{\xi} \right|_{\rm c} }{ E_{j} }.
\end{equation*}
Performing the same computations as in \cite[Eq.~(16)]{BlLeMa2000} and using $\xi^{k} = \mathcal{E}^{k}_{l}\,v^{l}$, one has
\begin{align*}
  \ip{ \fd{\tilde{\ell}_{\rm r}}{\xi} }{ E_{j} }
  &= \mathbb{G}_{jk}\, \xi^{k}
    + \mathbb{G}_{ja} \parentheses{ \dot{\theta}^{a} + \tau^{a}_{k}\, \xi^{k} }
    + \mathbb{G}_{ab}\, \rho^{bc}\, \pd{\tilde{\ell}_{\rm r}}{\dot{\theta}^{c}}\, \tau^{a}_{j} \\
  &\quad + \sigma_{ab}\, \tau^{a}_{j} \tau^{b}_{k} \, \xi^{k}
    + (\rho_{ab} - \mathbb{G}_{ab}) \rho^{ad}\, \pd{\tilde{\ell}_{\rm r}}{\dot{\theta}^{d}} \parentheses{ \mathbb{G}^{bc} \mathbb{G}_{cj} + \tau^{b}_{j} } \\
  &= \mathbb{G}_{jk}\, \mathcal{E}^{k}_{l} v^{l}
    + \mathbb{G}_{ja} \parentheses{ \dot{\theta}^{a} + \tau^{a}_{k}\, \mathcal{E}^{k}_{l}\, v^{l} }
    + \mathbb{G}_{ab}\, \rho^{bc}\, \pd{\tilde{\ell}_{\rm r}}{\dot{\theta}^{c}}\, \tau^{a}_{j} \\
  &\quad + \sigma_{ab}\, \tau^{a}_{j} \tau^{b}_{k} \, \mathcal{E}^{k}_{l}\, v^{l}
    + (\rho_{ab} - \mathbb{G}_{ab}) \rho^{ad}\, \pd{\tilde{\ell}_{\rm r}}{\dot{\theta}^{d}} \parentheses{ \mathbb{G}^{bc} \mathbb{G}_{cj} + \tau^{b}_{j} },
\end{align*}
and so
\begin{align*}
  \tilde{p}_{i}
  &= \mathcal{E}_{i}^{j} \biggl(
  \mathbb{G}_{jk}\, \mathcal{E}^{k}_{l} v^{l}
  + \mathbb{G}_{ja} \parentheses{ \dot{\theta}^{a} + \tau^{a}_{k}\, \mathcal{E}^{k}_{l}\, v^{l} }
  + \mathbb{G}_{ab}\, \rho^{bc}\, \pd{\tilde{\ell}_{\rm r}}{\dot{\theta}^{c}}\, \tau^{a}_{j} \\
  &\quad\qquad + \sigma_{ab}\, \tau^{a}_{j} \tau^{b}_{k} \, \mathcal{E}^{k}_{l}\, v^{l}
  + (\rho_{ab} - \mathbb{G}_{ab}) \rho^{ad}\, \pd{\tilde{\ell}_{\rm r}}{\dot{\theta}^{d}} \parentheses{ \mathbb{G}^{bc} \mathbb{G}_{cj} + \tau^{b}_{j} }
  \biggr) \biggr|_{\rm c}\\
  &= \mathcal{G}_{i\alpha}\,v^{\alpha}
    + \mathcal{G}_{ia} \parentheses{ \dot{\theta}^{a} + \mathcal{T}^{a}_{\alpha}\, v^{\alpha} }
    + \mathbb{G}_{ab}\,\rho^{bc}\, \tilde{\pi}_{c}\, \mathcal{T}^{a}_{i} \\
  &\quad + \sigma_{ab}\, \mathcal{T}^{a}_{i} \mathcal{T}^{b}_{\alpha} \, v^{\alpha}
    + (\rho_{ab} - \mathbb{G}_{ab}) \rho^{ad}\, \tilde{\pi}_{d} \parentheses{ \mathbb{G}^{bc} \mathcal{G}_{ci} + \mathcal{T}^{b}_{i} },
\end{align*}
where we used \eqref{eq:tilde_pi}.

\subsection{Proof of \Cref{prop:matching}}
\label{proof:matching}
One can obtain the matching conditions~\eqref{eq:matching} almost the same way as in the proof of \cite[Theorem~2.1]{BlLeMa2000}.
Indeed, the controlled Euler--Poincar\'e equations \eqref{eq:cNHEP} and the Euler--Poincar\'e equations \eqref{eq:NHEP-tildel} with the (reduced) controlled Lagrangian $\tilde{\ell}_{\rm r}$ match if $p_{i}$ in \eqref{eq:p_and_pi} and $\tilde{p}_{i}$ in \eqref{eq:tilde_p} are equal, i.e.,
\begin{equation*}
  \parentheses{ \mathcal{G}_{ib} + \sigma_{ab}\, \mathcal{T}^{a}_{i} } \mathcal{T}^{b}_{\alpha}\, v^{\alpha}
  + \mathbb{G}_{ab}\,\rho^{bc}\, \tilde{\pi}_{c}\, \mathcal{T}^{a}_{i}
  + (\rho_{ab} - \mathbb{G}_{ab}) \rho^{ad}\, \tilde{\pi}_{d} \parentheses{ \mathbb{G}^{bc} \mathcal{G}_{ci} + \mathcal{T}^{b}_{i} }
  = 0.
\end{equation*}
Let us assume $\mathcal{T}^{a}_{i} = - \sigma^{ab} \mathcal{G}_{bi}$, which is equivalent to to the first matching condition in \eqref{eq:matching} using the definitions \eqref{eq:mathcalG_ia} and \eqref{eq:mathcalT} of $\mathcal{G}_{ai}$ and $\mathcal{T}^{a}_{i}$.
Substituting $\mathcal{T}^{a}_{i} = - \sigma^{ab} \mathcal{G}_{bi}$ into the above displayed equation, we obtain
\begin{equation*}
  \mathcal{G}_{ai} = \rho_{ab}(\mathbb{G}^{bc} - \sigma^{bc}) \mathcal{G}_{ci},
\end{equation*}
but then this is satisfied if we assume the second matching condition in \eqref{eq:matching}.

For the control law, we again mimic \cite[Section~2.2]{BlLeMa2000} as follows:
Notice from \eqref{eq:p_and_pi} and \eqref{eq:tilde_pi} that
\begin{equation*}
  \pi_{a} - \mathbb{G}_{ab}\, \rho^{bd}\, \tilde{\pi}_{d}
  = -\mathbb{G}_{ab}\, \mathcal{T}^{b}_{\alpha}\, v^{\alpha}.
\end{equation*}
Using \eqref{eq:cNHEP2}, \eqref{eq:NHEP2-tildel}, and the above equality, we have
\begin{align*}
  u_{a} &= \od{}{t} \parentheses{ \pi_{a} - \mathbb{G}_{ab}\, \rho^{bd}\, \tilde{\pi}_{d} } \\
        &= -\mathbb{G}_{ab}\, \od{}{t} \parentheses{ \mathcal{T}^{b}_{\alpha}\, v^{\alpha} } \\
        &= -\mathbb{G}_{ab}\, \od{}{t} \parentheses{ \tau^{b}_{j}\, \mathcal{E}_{\alpha}^{j}\, v^{\alpha} } \\
        &= -\mathbb{G}_{ab}\, \tau^{b}_{j}\, \parentheses{
          D^{k}\mathcal{E}_{\alpha}^{j}\, \dot{\Gamma}_{k}\, v^{\alpha}
          + \mathcal{E}_{\alpha}^{j}\, \dot{v}^{\alpha}
          } \\
        &= \mathbb{G}_{ab}\, \tau^{b}_{j}\, \parentheses{
          D^{k}\mathcal{E}_{\alpha}^{j}\, \varkappa^{l}_{k\beta}\, \Gamma_{l}\, v^{\alpha} v^{\beta}
          - \mathcal{E}_{\alpha}^{j}\, \dot{v}^{\alpha}
          } \\
        &= \mathbb{G}_{ab}\, \tau^{b}_{j}\, \parentheses{
          \mathcal{E}_{i}^{j}\, \mathcal{F}^{i}_{\alpha\beta}\, v^{\alpha} v^{\beta}
          - \mathcal{E}_{\alpha}^{j}\, \dot{v}^{\alpha}
          } \\
        &= \mathbb{G}_{ab}\, \parentheses{
          \mathcal{T}^{b}_{i}\, \mathcal{F}^{i}_{\alpha\beta}\, v^{\alpha} v^{\beta}
          - \mathcal{T}^{b}_{\alpha}\, \dot{v}^{\alpha}
          },
\end{align*}
where we used \eqref{eq:NHEP2-tildel} and the definitions of $\mathcal{F}$ and $\mathcal{T}$ from \eqref{eq:mathcalF} and \eqref{eq:mathcalT} in the third last three equalities.

\subsection{Proof of \Cref{prop:sliding_eq-controlled}}
\label{proof:sliding_eq-controlled}
We employ the same Energy--Casimir method from \Cref{proof:spinning_eq}.

Recall from \Cref{ex:PendulumSkate_with_rotor} that the controlled system is equivalent to the original system~\eqref{eq:vector_field-PendulumSkate} whose inertia tensor $\mathbb{I} = \diag(I_{1}, I_{2}, I_{3})$ is replaced by $\tilde{\mathbb{I}}$ shown in \eqref{eq:tildeI}:
\begin{equation*}
  (I_{1}, I_{2}, I_{3})
  \to
  \parentheses{
    I_{1} + \frac{J_{1}^{2}}{\sigma} ,\, I_{2} + J_{2},\, I_{3} + J_{3}
  }.
\end{equation*}
Therefore, the controlled system possesses invariants $\tilde{\mathscr{E}}$ and $\{ \tilde{C}_{i} \}_{i=1}^{3}$ defined by making the above replacement in $\mathscr{E}$ and $\{ C_{i} \}_{i=1}^{3}$ (see \eqref{eq:mathscrE}, \eqref{eq:C1-PendulumSkate}, and \eqref{eq:C2-PendulumSkate}); note that $\tilde{C}_{3} = C_{3} = (\Gamma_{2}^{2} + \Gamma_{3}^{2})/2$ because it does not depend on $(I_{1}, I_{2}, I_{3})$.
More specifically, we have
\begin{equation*}
  D\tilde{\mathscr{E}}(\zeta_{\rm sl})
  = \begin{bmatrix}
    0 \\
    0 \\
    m Y_{0} \\
    0 \\
    m \mathrm{g} l
  \end{bmatrix},
  \quad
  D\tilde{C}_{1}(\zeta_{\rm sl})
  = \begin{bmatrix}
    0 \\
    I_{3} + J_{3}\\
    0 \\
    0 \\
    0
  \end{bmatrix},
  \quad
  D\tilde{C}_{2}(\zeta_{\rm sl})
  = \begin{bmatrix}
    0 \\
    0 \\
    1 \\
    0 \\
    0
  \end{bmatrix},
  \quad
  D\tilde{C}_{3}(\zeta_{\rm sl})
  = \begin{bmatrix}
    0 \\
    0 \\
    0 \\
    0 \\
    1
  \end{bmatrix}.
\end{equation*}
Setting $(c_{1}, c_{2}, c_{3}) = (0, -m Y_{0}, -m \mathrm{g} l)$, we have
\begin{equation*}
  D\parentheses{ \tilde{\mathscr{E}} + c_{1} \tilde{C}_{1} + c_{2} \tilde{C}_{2} + c_{3} \tilde{C}_{3} }(\zeta_{\rm sl}) = 0.
\end{equation*}
Then we also see that
\begin{align*}
  \mathscr{H}
  &\defeq D^{2}\parentheses{ \tilde{\mathscr{E}} + c_{1} \tilde{C}_{1} + c_{2} \tilde{C}_{2} + c_{3} \tilde{C}_{3} }(\zeta_{\rm sl}) \\
  &= \begin{bmatrix}
    I_1 + m l^{2} + J_{1}^{2}/\sigma & 0 & 0 & 0 & 0\\
    0 & I_{3} + J_{3} & 0 & -m l Y_{0} & 0 \\
    0 & 0 & m & 0 & 0 \\
    0 & -m l Y_{0} & 0 & -m \mathrm{g} l & 0 \\
    0 & 0 & 0 & 0 & -m \mathrm{g} l\\
  \end{bmatrix}.
\end{align*}
The relevant tangent space is the null space
\begin{equation*}
  \ker
  \begin{bmatrix}
    D\tilde{C}_{1}(\zeta_{\rm sl})^{T} \\
    D\tilde{C}_{2}(\zeta_{\rm sl})^{T} \\
    D\tilde{C}_{3}(\zeta_{\rm sl})^{T}
  \end{bmatrix}
  = \setdef{
    w = s_{1}
    \begin{bmatrix}
      1 \\
      0 \\
      0 \\
      0 \\
      0
    \end{bmatrix}
    + s_{2}
    \begin{bmatrix}
      0 \\
      0 \\
      0 \\
      1 \\
      0
    \end{bmatrix}
  }{
    s_{1}, s_{2} \in \R
  }.
\end{equation*}
Hence we have the quadratic form
\begin{equation*}
  w^{T} \mathscr{H} w
  = \parentheses{ I_{1} + m l^{2} + \frac{J_{1}^{2}}{\sigma} } s_{1}^{2}
  - m \mathrm{g} l\,s_{2}^{2},
\end{equation*}
which is negative definite in $(s_{1}, s_{2})$ under the assumed condition~\eqref{eq:zeta_sl-stability_condition}.

\bibliography{CtrlNHEPwithBsym}
\bibliographystyle{plainnat}

\end{document}